\documentclass[a4paper,12pt]{article}
\usepackage{amsmath}
\usepackage{amssymb}
\usepackage{amsthm}
\usepackage{amscd}
\usepackage{amsfonts}
\usepackage{graphicx}%
\usepackage{empheq}
\usepackage{morefloats}
\usepackage[dvipsnames]{xcolor}

\usepackage[font={small,sf}]{caption}

\usepackage{enumerate}
\usepackage{enumitem}

\newcommand{\be}{\begin{equation}}
\newcommand{\ee}{\end{equation}}
\newcommand{\eps}{\varepsilon}
\newcommand{\mb}{\mathbf}
\newcommand{\bs}{\boldsymbol}
\newcommand{\Real}{\mathbb R}

\renewcommand{\Re}{\operatorname{Re}}
\renewcommand{\Im}{\operatorname{Im}}

\newcommand{\Wcal}{\mathcal W}
\newcommand{\mc}{\mathcal}

\usepackage{enumerate}
\usepackage{amsfonts}

\usepackage{amsthm}
\usepackage{psfrag}
\usepackage{epsfig}
\usepackage{subfigure}
\usepackage{graphicx}

\usepackage{amssymb}
\usepackage{epstopdf}
\usepackage{amsbsy}
\usepackage{latexsym}
\usepackage{moreverb}
\usepackage{textcomp}
\usepackage{verbatim}

\usepackage[T1]{fontenc}
\usepackage[utf8]{inputenc}
\usepackage{authblk}
\usepackage{amsbsy}

\textwidth6.6in
\textheight9in

\setlength{\topmargin}{0.3in} \addtolength{\topmargin}{-\headheight}
\addtolength{\topmargin}{-\headsep}

\setlength{\oddsidemargin}{0in}

\oddsidemargin  0.0in \evensidemargin 0.0in \parindent0em

\begin{document}

\theoremstyle{remark} \newtheorem*{remark}{Remark}
\theoremstyle{plain} \newtheorem{theorem}{Theorem}[section]
\theoremstyle{plain} \newtheorem{proposition}[theorem]{Proposition}
\theoremstyle{plain}	\newtheorem{lemma}[theorem]{Lemma}
\theoremstyle{plain}	\newtheorem{cor}{Corollary}
\theoremstyle{definition}	\newtheorem{definition}{Definition}
\theoremstyle{corollary}	\newtheorem{corollary}{Corollary}
\theoremstyle{remark}	\newtheorem{example}{Example}

\title{Stochastic regularity of general quadratic observables of high frequency waves}
\author{G. Malenov\'a and O. Runborg}
\date{}

\maketitle

\begin{abstract}
 We consider the wave equation with uncertain initial data and medium, 
 when
the wavelength $\varepsilon$ of the solution is short compared to the
distance traveled by the wave. 
We are interested in the statistics for quantities of interest (QoI),
defined as functionals of the wave solution,
given the probability distributions of the uncertain parameters
in the wave equation.
Fast methods to compute this statistics
require considerable smoothness in the mapping from parameters to the QoI,
which is typically not present in the high frequency case,
as the oscillations on the $\varepsilon$ scale
in the wave field is inherited by the QoIs.
The main contribution of this work is to 
identify certain non-oscillatory quadratic QoIs and show
$\varepsilon$-independent estimates for the derivatives of the QoI
with respect to the parameters, when the wave solution is
replaced by a Gaussian beam approximation.
\end{abstract}

\section{Introduction}

Many physical phenomena can be described by propagation of high-frequency waves
with stochastic parameters. For instance, an earthquake
where seismic waves with uncertain epicenter travel through the layers of the Earth
with uncertain soil characteristics represents one such problem stemming from geophysics. Similar problems arise e.g. in optics, acoustics
or oceanography. By high frequency we understand that the wavelength is very
short compared to the distance traveled by the wave.

As a simplified model of the wave propagation, we use the scalar wave equation 
\begin{subequations}\label{waveeq}
\begin{align}
&u^\eps_{tt}(t,\mb{x},{\mb y})= c(\mb{x},{\mb y})^2 \, \Delta u^\eps(t,\mb{x},{\mb y}), \qquad &&\text{in  } [0,T] \times {\Real}^n \times \Gamma, \label{wave_1st}\\
&u^\eps(0,\mb{x},{\mb y})=B_0(\mb{x},{\mb y}) \, e^{i \, \varphi_0(\mb{x},{\mb y})/\eps}, &&\text{in } {\Real}^n \times \Gamma,\\
&u^\eps_t(0,\mb{x},{\mb y})=\eps^{-1}B_1(\mb{x},{\mb y}) \, e^{i \, \varphi_0(\mb{x},{\mb y})/\eps},
&&\text{in } {\Real}^n \times \Gamma, \label{wave_2nd}
\end{align}
\end{subequations}
with highly oscillatory initial data, represented by the small wavelength $\eps\ll 1$, and a stochastic parameter ${\mb y} \in \Gamma \subset\Real^N$ which models the uncertainty. 
For realistic problems,
the dimension $N$ of the stochastic space can be fairly large. Two sources of uncertainty are considered: the local speed, $c = c(\mb x,\mb y)$, and the initial data, $B_0=B_0(\mb x,\mb y)$, $B_1=B_1(\mb x,\mb y)$, $\varphi_0=\varphi_0(\mb x,\mb y)$. The solution is therefore also a function of the random parameter, $u^\eps=u^\eps(t,\mb x,\mb y)$. 

The focus of this work is on the regularity of certain
nonlinear functionals of the solution $u^\eps$ with respect
to the random parameters ${\mb y}$. 
Our motivation for the study comes from the field of
uncertainty quantification (UQ), where the functionals represent
{\em quantities of interest} (QoI). We will denote them
generically by ${\mathcal Q}({\bf y})$.
The aim in (forward) UQ is to compute the statistics of 
${\mathcal Q}$, typically
the mean and the variance, given the probability distribution of
${\bf y}$. This is often  done by random sample based methods
like Monte--Carlo \cite{MC}, 
which, however, has a rather slow convergence rate;
the error decays as $O(N^{-1/2})$
for $N$ samples.
Grid based methods like 
Stochastic Galerkin (SG) 
\cite{Ghanem,Xiu_Karniadakis:02,BTZ:05,Todor_Schwab}
and Stochastic Collocation (SC) 
\cite{Xiu_Hesthaven,BNT:07,NTW:08}
can achieve much faster convergence rates, even spectral rates 
where the error decays faster than $N^{-p}$ for all $p>0$.
They rely on smoothness of ${\mathcal Q}({\bf y})$
with respect to ${\bf y}$.
This smoothness is referred to as the {\em stochastic
regularity} of the problem. When ${\bf y}$ is a high-dimensional
vector, SG and SC must be performed on sparse grids
\cite{Bungartz_Griebel:04,Griebel_Knapek:09}
to break the curse of dimension. This typically
requires even stronger stochastic regularity.

To show the fast convergence of SG and SC, analysis of the
stochastic regularity has been carried out
for many different PDE problems. Examples include
 elliptic problems 
\cite{BabuskaNobileTempone:2010,CohenDevoreSchwab:2011,NobileTempone:2009},
the wave equation \cite{MotamedNobileTempone:2013}, Maxwell equations 
\cite{LiFangLin:2018}
and various kinetic equations
\cite{JinLiuMa:2017,LiWang:2017,LiuJin:2017,JinZhu:2018,ShuJin:2018}.

In the high frequency case, which is the subject of this article,
the main question is how the ${\mb y}$-derivatives of ${\mathcal Q}$ 
depend on
the wave length $\varepsilon$. The solution $u^\eps$ oscillates
with period $\varepsilon$ and these oscillations are
often inherited by ${\mathcal Q}$. If this is the case, 
SG and SC will not work
well, as the 
derivatives of ${\mathcal Q}$ grow rapidly with $\varepsilon$.
Special choices of ${\mathcal Q}$ can, however, have better
properties, as we discuss below. 
A further complication is that the direct
numerical solution of \eqref{waveeq}
becomes infeasible as $\varepsilon\to 0$, as the computational cost 
to approximate $u^\eps$
is of order $O(\eps^{-n-1})$. Asymptotic methods based on
e.g. {\em geometrical optics} \cite{EngRun:03,Runborg07} or 
\emph{Gaussian beams} (GB) \cite{Cerveny_etal:1982,Ralston82}
must therefore be used.

In \cite{MaMoRuTe16} we
identified a non-oscillatory quadratic QoI, 
\be\label{QoIsmall0}
 \widetilde{\mc Q}(t,\mb y):=  \int_{\Real^n} |u^\eps (t,\mb x,\mb y)|^2\psi(t,\mb x)\, d\mb x,
 \qquad  \psi\in C_c^\infty(\Real\times\Real^n),
\ee
and introduced a GB solver for $u^\eps$
coupled with SC on sparse grids to approximate it.
A big advantage of the GB method is that it approximates the solution to the PDE \eqref{waveeq} via solutions
to a set of $\eps$-independent ODEs instead.
In \cite{malenova2017stochastic} we also showed 
rigorously that all derivatives of $\widetilde{\mathcal Q}$
are bounded independently of $\varepsilon$ when the
wave solution $u^\eps$ is approximated by Gaussian beams,
\[
	\sup_{\mb y\in \Gamma}\left|\frac{
	\widetilde{\mc Q}(t,\mb y)}{\partial \mb y^{\bs \sigma}}\right|\leq C_{\bs\sigma}, \qquad \forall \bs\sigma\in \mathbb N_0^N,
\]
where $C_{\bs\sigma}$ are independent of $\varepsilon$.
A related study is found in
\cite{JinLiuRussoZhou:2019}.

In this article we generalize the result in 
\cite{malenova2017stochastic}
and consider QoIs
which include higher order derivatives of the solution
and also averaging in time. More precisely, we study
\be\label{QoIbig}
 \mc Q^{p,\bs\alpha}(\mb y) = \eps^{2(p+|\bs\alpha|)}\int_{\Real}\int_{\Real^n} g(t,\mb x,\mb y)|\partial_t^p\partial_{\mb x}^{\bs\alpha}u^\eps (t,\mb x,\mb y)|^2\psi(t,\mb x)\, d\mb x\, dt,
\ee
with $g\in C^\infty(\Real\times\Real^n\times \Gamma)$,
$p$ a non-negative integer and ${\bs\alpha}$ a multi-index.
Many physically relevant QoIs can be written on this form.
The simplest case in \eqref{QoIbig},
\be\label{QoIbig0}
\mc Q(\mb y):=\mc Q^{0,\mb 0}(\mb y) = \int_\Real \int_{\Real^n} |u^\eps(t,\mb x,\mb y)|^2 \psi(t,\mb x) \, d\mb x\, dt,
\ee
represents the weighted average intensity of the wave. If the solution $u^\eps$ to \eqref{waveeq} describes the pressure, then $\mc Q$ represents the acoustic potential energy. 
Another significant example is the weighted total energy of the wave,
\[
E(\mb y) = \eps^2 \int_\Real \int_{\Real^n} (|u_t^\eps(t,\mb x,\mb y)|^2 + c^2(\mb x,\mb y) |\nabla u^\eps(t,\mb x,\mb y)|^2 )\psi(t,\mb x)\, d\mb x\, dt,
\]
which can be decomposed into terms of type \eqref{QoIbig}.
An additional example is the weighted and averaged version of the Arias intensity,
\[
 I(\mb y) = \eps^4 \int_{\Real}\int_{\Real^n} |u_{tt}^\eps(t,\mb x,\mb y)|^2 \psi(t,\mb x)\, d\mb x\, dt,
\]
which represents the total energy per unit mass and is used to
measure the
strength of ground motion during an earthquake, see \cite{hansen1970seismic}.

In this work we show that also the QoI \eqref{QoIbig}
is non-oscillatory
when $u^\eps$ is replaced by the GB approximation $\tilde{u}$. Indeed, under the assumptions given in 
Section~\ref{sec:ass}
we then prove that
%
%
for all compact $\Gamma_c\subset \Gamma$ and all $\bs \sigma\in \mathbb N_0^N$,
\be\label{QoIQoI}
	\sup_{\mb y\in \Gamma_c}\left|\frac{\partial^{\bs\sigma} \mc Q^{p,\bs\alpha}(\mb y)}{\partial \mb y^{\bs \sigma}}\right|\leq C_{\bs\sigma},
\ee
for some constants $C_{\bs\sigma}$, uniformly in $\eps$.

The full GB approximation $\tilde u$ features two modes, $\tilde u= \tilde u^++\tilde u^-$, satisfying two different sets of ODEs. In certain cases, it is possible to approximate $u^\eps$ by one of 
the modes only, i.e. either $\tilde u = \tilde u^+$ or $\tilde u =\tilde u^-$. We can then examine a QoI that, in contrast to \eqref{QoIbig}, 
is only integrated in space,
\be\label{QoIsmall}
 \widetilde{\mc Q}^{p,\bs\alpha}(t,\mb y) = \eps^{2(p+|\bs\alpha|)}\int_{\Real^n} g(t,\mb x,\mb y)|\partial_t^p\partial_{\mb x}^{\bs\alpha}u^\eps (t,\mb x,\mb y)|^2\psi(t,\mb x)\, d\mb x,
\ee
and show
a stronger regularity result, 
\be\label{est1}
	\sup_{\substack{\mb y\in \Gamma_c\\t\in[0,T]}}\left|\frac{\partial^{\bs\sigma} \widetilde{\mc Q}^{p,\bs\alpha}(t,\mb y)}{\partial \mb y^{\bs \sigma}}\right|\leq C_{\bs\sigma},  \qquad \forall \bs\sigma\in \mathbb N_0^N,
\ee
uniformly in $\eps$, when $u^\eps$ is replaced by $\tilde u^\pm$.
In fact, this one-mode case, with $p={\bs\alpha}=0$, was the one considered in
\cite{malenova2017stochastic}. 

The layout of this article is as follows: we briefly introduce our assumptions in Section \ref{sec:ass} and then present the Gaussian beam method in Section \ref{sec:GB}. The one-mode QoI 
\eqref{QoIsmall}
with $u^\eps$ approximated by $\tilde u=\tilde{u}^\pm$ is regarded in Section \ref{sec:onefamily}. The stochastic regularity \eqref{est1} is shown in Theorem~\ref{th:new}. 
This serves as a stepping stone for the proof of regularity of the 
general two-mode QoI \eqref{QoIbig}
with
$u^\eps$ approximated by
 $\tilde u = \tilde u^++\tilde u^-$,
which is the subject of Section \ref{sec:twofamily} where
the final stochastic regularity \eqref{QoIQoI} is shown in 
Theorem~\ref{th:main}.

\section{Assumptions and preliminaries}\label{sec:ass}
Let us consider the Cauchy problem \eqref{waveeq}. By $t\in [0,T]\subset \Real$ we denote the time, $\mb x = (x_1,\ldots,x_n)\in \Real^n$ is the spatial variable and the uncertainty in the model is described by the random variable $\mb y = (y_1,\ldots y_N)\in \Gamma$ where $\Gamma\subset \Real^N$ is an open set. 
By ${\mc B}_\mu$ we will denote the $n$-dimensional closed ball around 0
of radius $\mu$, i.e. the set 
${\mc B}_\mu := \{\mb x\in \Real^n: |\mb x|\leq \mu\}$,
with the convention that ${\mc B}_\infty = \Real^n$.

We make the following precise assumptions.
\begin{enumerate}[label=(A\arabic*)]
\item \label{ASSUM1} Strictly positive, smooth and bounded speed of propagation,
$$
c\in C^\infty(\Real^n\times\Gamma),\qquad
0<c_{\rm min} \le c(\bold{x},{\bf y}) \le c_{\rm max} < \infty, \qquad \forall \, {\bf x} \in {\mathbb R}^n, \quad \forall \, {\bf y} \in \Gamma.
$$
and for each multi-index pair $\bs\alpha$, $\bs\beta$ there is a constant $C_{\bs\alpha,\bs\beta}$ such that
\[
\left|\partial_{\bf x}^{\bs \alpha}\partial_{\bf y}^{\bs \beta} c({\bf x},{\bf y}) \right|\leq
C_{\bs\alpha,\bs\beta}, \qquad
\forall\, {\bf x} \in {\mathbb R}^n, \quad \forall \, {\bf y} \in \Gamma.
\]
\item \label{ASSUM2} Smooth and (uniformly) compactly supported initial amplitudes,
$$
B_{{\ell}}\in C^\infty(\Real^n\times\Gamma),\qquad
 {\rm supp}\,B_{{\ell}}(\,\cdot\,,{\bf y})\subset K_0,
 \qquad \ell=0,1,\quad \forall \, \mb y\in\Gamma,
$$
where $K_0\subset \Real^n$ is a compact set.
\item \label{ASSUM3} Smooth initial phase with non-zero gradient,
$$
  \varphi_0\in C^\infty(\Real^n\times\Gamma),\qquad
|\nabla\varphi_0({\bf x},{\bf y})|>0,\qquad \forall \, {\bf x} \in {\mathbb R}^n, \quad \forall \, {\bf y} \in \Gamma.
$$
\item \label{ASSUM5}High frequency,
$$
0<\varepsilon\leq 1.
$$
\item \label{ASSUM6} 
Smooth and compactly supported QoI test function,
\[
 \psi\in C^\infty_c(\Real\times \Real^n),\qquad \text{supp}\, \psi \subset [0,T]\times K_1,\]
 where $K_1\subset \Real^n$ is a compact set.
\end{enumerate}
Throughout the paper we will frequently
use the shorthand  $f\in C^\infty$
with the understanding that
$f$ is continuously differentiable infinitely
many times in each of its variables, over its entire
domain of definition,
typically 
$\Real\times\Real^n\times\Gamma\times \Real^n$
or $\Real\times\Real^n\times\Gamma\times\Real^n\times\Real^n$.

\section{Gaussian beam approximation}\label{sec:GB}
Solving \eqref{waveeq} directly requires a substantial number of numerical operations when the wavelength $\eps$ is small. In particular, to maintain a given accuracy for a fixed $\mb y$, we need at least $O(\eps^{-n})$ discretization points in $\mb x$ and $O(\eps^{-1})$ time steps resulting into the computational cost $O(\eps^{-n-1})$.
To avoid the high cost we employ asymptotic methods arising from geometrical optics. In particular, the Gaussian beam (GB) method provides a powerful tool, see \cite{Cerveny_etal:1982, LRT:13, Ralston82, Runborg07, Tanushev08}. 

Individual Gaussian beams are asymptotic solutions to the wave equation \eqref{waveeq} that
concentrate around a central ray in space-time.
Rays are bicharacteristics of the wave equation \eqref{waveeq}. They are denoted by $(\mb q^\pm,\mb p^\pm)$ where $\mb q^\pm(t,\mb y,\mb z)$ represents the position and $\mb p^\pm(t,\mb y,\mb z)$ the direction, respectively, and $\mb z\in K_0$ is the starting point so that $\mb q^\pm(0,\mb y,\mb z) = \mb z$ for all $\mb y\in \Gamma$. From each $\mb z$, the ray propagates in two opposite directions, here distinguished by the superscript $\pm$. These corresponds to the two modes of the wave
equation and leads to two different GB solutions, one
for each mode.
%
We denote the two $k$-th order Gaussian beams starting at $\mb z\in K_0$ by $v_k^\pm(t,\mb x,\mb y,\mb z)$ and define it as
\begin{equation}\label{vdef}
v_{k}^\pm(t,\mb x,\mb y,\mb z) =
A_{k}^\pm(t,\mb x-\mb q^\pm(t,\mb y,\mb z),\mb y,\mb z) e^{i\Phi_{k}^\pm(t,\mb x-\mb q^\pm(t,\mb y,\mb z),\mb y,\mb z) /\varepsilon},
\end{equation}
where

\begin{equation}\label{phidef}
  \Phi_{k}^\pm(t,\mb x,\mb y,\mb z) =
  \phi_{0}^\pm(t,\mb y,\mb z) + \mb x^T \mb p^\pm(t,\mb y,\mb z) + \frac12\,\mb x^T  M^\pm(t,\mb y,\mb z) \mb x +
  \sum_{|\bs\beta|=3}^{k+1} \frac1{\bs\beta!}\phi_{\bs\beta}^\pm(t,\mb y,\mb z) \mb x^{\bs\beta},
\end{equation}
is the $k$-th order phase function
and
\begin{equation}\label{Adef1}
  A_{k}^\pm(t,\mb x,\mb y,\mb z)= \sum_{j=0}^{\lceil \frac{k}{2} \rceil -1} \varepsilon^j \sum_{|\bs\beta|=0}^{k-2j-1} \frac1{\bs\beta!}a_{j,\bs\beta}^\pm(t,\mb y,\mb z) \mb x^{\bs\beta},  
\end{equation}
is the $k$-th order amplitude function. 
The higher the order $k$, the more accurately $v_k^\pm$ approximates the solution to \eqref{waveeq} in terms of $\eps$.
The variables $\phi_0^\pm, \mb q^\pm, \mb p^\pm, M^\pm, \phi_{\bs\beta}^\pm, a_{j,\bs\beta}^\pm$ are given by a set of ODEs, the simplest ones being
\begin{subequations}\label{IC}
\begin{align}
    \dot \phi_0^\pm & = 0,\\
    \dot{\mb q}^\pm & = \pm c(\mb q^\pm) \frac{\mb p^\pm}{|\mb p^\pm|},\\
    \dot{\mb p}^\pm & = \mp \nabla c(\mb q^\pm) |\mb p^\pm|,\\
    \dot M^\pm & = \mp(D^\pm + (B^\pm)^T M^\pm + M^\pm B^\pm + M^\pm C^\pm M^\pm),\\
    \dot a_{0,\mb 0}^\pm & = \pm \frac{1}{2 |\mb p^\pm|} \left(-c(\mb q^\pm) \text{Tr}(M^\pm) + \nabla c(\mb q^\pm)^T  \mb p^\pm + \frac{c(\mb q^\pm) (\mb p^\pm)^T M^\pm \mb p^\pm}{|\mb p^\pm|^2}\right) a_{0,\mb 0}^\pm,
\end{align}
\end{subequations}
where
\[
  B^\pm = \frac{\mb p^\pm \nabla c(\mb q^\pm)^T}{|\mb p^\pm|}, \qquad C^\pm = \frac{c(\mb q^\pm)}{|\mb p^\pm|} - \frac{c(\mb q^\pm)}{|\mb p^\pm|^3}  \mb p^\pm (\mb p^\pm)^T, \qquad D^\pm= |\mb p^\pm| \nabla^2 c(\mb q^\pm).
\]
For the ODEs determining $\phi_{\bs\beta}^\pm$ and $a_{j,\bs\beta}^\pm$ other than the leading term we refer the reader to \cite{Ralston82, Tanushev08}.

As mentioned above, the sign corresponds to GBs moving in opposite directions which means that they 
constitute two different modes that
are governed by two different sets of ODEs.
%
Single beams from the same mode with their starting points in $K_0$ are summed together to form the
$k$-th order {\em one-mode} solution $u_k^\pm(t,\mb x,\mb y)$,
\be\label{ukdef}
u^{\pm}_k(t,\mb x,\mb y) = 
\left(\frac{1}{2\pi \eps}\right)^{n/2} \int_{K_0}v_k^{\pm}(t,\mb x,\mb y,\mb z)\varrho_\eta(\mb x-\mb q^{\pm}(t,\mb y,\mb z))d\mb z.
\ee
where the integration in $\mb z$ is over the support of the initial data $K_0\subset\Real^n$, which is independent of ${\mb y}$
by \ref{ASSUM2}.
Since the wave equation is linear, the superposition of beams is still an
asymptotic solution.
The function $\varrho_\eta\in C^\infty(\Real^n)$
is a real-valued {\em cutoff} function with radius $0<\eta\leq\infty$,
\be\label{cutoff}
	\varrho_\eta(\mb x) = \left\{\begin{array}{lll}
							1, & \text{if } |\mb x|\leq \eta, & \text{for } 0<\eta<\infty,\\
							0, & \text{if } |\mb x|\geq 2\eta,& \text{for } 0<\eta<\infty,\\
							1, & & \text{for } \eta = \infty.
			\end{array}\right.
\ee
For first order GBs, $k=1$, one can choose $\eta = \infty$, i.e. no $\varrho_\eta$, see below.

Each GB $v_k^\pm$ requires initial values for all its coefficients. An appropriate choice makes $u_k^\pm(0,\mb x,\mb y)$ converge asymptotically as $\eps\to 0$ to the initial conditions in \eqref{waveeq}. As shown in \cite{LRT:13}, the initial data are to be chosen as follows:
\begin{subequations}\label{GBini}
\begin{align}
 \mb q^\pm(0,\mb y,\mb z) &= \mb z, \\
 \mb p^\pm(0,\mb y,\mb z) &= \nabla \varphi_0(\mb z,\mb y), \\
 \phi^\pm_0(0,\mb y,\mb z) &= \varphi_0(\mb z,\mb y), \\
 M^\pm(0,\mb y,\mb z) &= \nabla^2\varphi_0(\mb z,\mb y) + i\ I_{n\times n},\\
 \phi^\pm_{\bs\beta}(0,\mb y,\mb z) &= \partial_{\mb x}^{\bs\beta}\varphi_0(\mb z,\mb y), \qquad |\bs\beta|= 3,\ldots,k+1 ,\\
a_{0,\mb 0}^\pm(0,\mb y,\mb z) &= \frac12\left(B_0(\mb z,\mb y) \pm \frac{B_1(\mb z,\mb y)}{ic(\mb z,\mb y)|\nabla\varphi_0(\mb z,\mb y)|}\right),
\end{align}
\end{subequations}
where $I_{n\times n}$ denotes the identity matrix of size $n$.
The initial data for the higher order amplitude coefficients are given in \cite{LRT:13}.
The following proposition shows that all these variables are smooth and $a_{j,\bs\beta}^\pm$ remain supported in $K_0$ for all times $t$ and random variables $\mb y\in\Gamma$.
\begin{proposition}\label{prop:smooth}
Under assumptions \ref{ASSUM1}--\ref{ASSUM3}, the coefficients $\phi_0^\pm, \mb q^\pm, \mb p^\pm, M^\pm, \phi_{\bs\beta}^\pm, a_{j,\bs\beta}^\pm$
all belong to $C^\infty(\Real\times\Gamma\times\Real^n)$ and
\[
\text{\rm supp} (a_{j,\bs\beta}^\pm(t,\mb y,\cdot))\subset K_0, \qquad \forall\ t\in \Real, \, \mb y\in \Gamma.
\]
Consequently, $\Phi_k^\pm\in C^\infty$.
\end{proposition}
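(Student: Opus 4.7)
The plan is to apply standard smooth-dependence results for parametrized ODEs to each of the coefficient functions in turn. The three things to check are: (i) the right-hand sides of the ODE systems are $C^\infty$ in the dependent variables and parameters, (ii) the solutions exist globally in $t\in\Real$, and (iii) the prescribed initial data are $C^\infty$ functions of $(\mb y,\mb z)$. Point (iii) is immediate from \eqref{GBini} together with assumptions \ref{ASSUM1}--\ref{ASSUM3}. The delicate points are (i) and (ii) for the ray system and for the Riccati equation governing $M^\pm$.

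I would first handle the ray ODEs for $(\mb q^\pm,\mb p^\pm)$ in \eqref{IC}. The only obstruction to smoothness of the right-hand side is the factor $1/|\mb p^\pm|$, so I would show that $|\mb p^\pm|$ stays uniformly bounded away from zero on compact subsets of $\Gamma\times\Real^n$. The key observation is that $H^\pm(t,\mb y,\mb z):=c(\mb q^\pm)|\mb p^\pm|$ is a conserved quantity along the bicharacteristic flow, which combined with \ref{ASSUM1} and \ref{ASSUM3} gives
\[
\frac{c_{\min}}{c_{\max}}\,|\nabla\varphi_0(\mb z,\mb y)| \;\le\; |\mb p^\pm(t,\mb y,\mb z)| \;\le\; \frac{c_{\max}}{c_{\min}}\,|\nabla\varphi_0(\mb z,\mb y)|.
\]
Together with $|\dot{\mb q}^\pm|\le c_{\max}$ this rules out finite-time blowup, and the classical theorem on smooth dependence of ODE solutions on initial data and parameters yields $\mb q^\pm,\mb p^\pm\in C^\infty(\Real\times\Gamma\times\Real^n)$. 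The equation for $\phi_0^\pm$ trivially integrates to $\phi_0^\pm(t,\mb y,\mb z)=\varphi_0(\mb z,\mb y)$, which is smooth by \ref{ASSUM3}.

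The main obstacle is the matrix Riccati equation for $M^\pm$, which is nonlinear and could a priori blow up. The standard workaround (see \cite{Ralston82,Tanushev08}) is to write $M^\pm=N^\pm (R^\pm)^{-1}$ for a pair $(N^\pm,R^\pm)$ solving an associated linear $2n\times 2n$ Hamiltonian-type system with smooth coefficients in $(\mb q^\pm,\mb p^\pm)$, with initial data $R^\pm(0)=I$, $N^\pm(0)=\nabla^2\varphi_0(\mb z,\mb y)+iI$. A classical lemma shows that $\Im M^\pm$ remains positive definite for all time, which implies $\det R^\pm\ne 0$ globally; hence $M^\pm=N^\pm(R^\pm)^{-1}$ is globally defined and $C^\infty$ in $(t,\mb y,\mb z)$. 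The higher-order phase coefficients $\phi_{\bs\beta}^\pm$ and the amplitude coefficients $a_{j,\bs\beta}^\pm$ then satisfy \emph{linear} ODEs whose coefficients are smooth functions of the already-constructed quantities $(\mb q^\pm,\mb p^\pm,M^\pm,\ldots)$, so linear ODE theory plus smooth parameter dependence gives $\phi_{\bs\beta}^\pm,a_{j,\bs\beta}^\pm\in C^\infty$, and then $\Phi_k^\pm\in C^\infty$ follows directly from \eqref{phidef}.

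Finally, the support statement for the amplitudes is a clean consequence of the linearity of their ODEs. For each fixed $\mb z$ the coefficient $a_{j,\bs\beta}^\pm(\cdot,\mb y,\mb z)$ solves a homogeneous linear system, and its initial data given in \eqref{GBini} (and the analogous higher-order data) are linear combinations of $B_0(\mb z,\mb y)$, $B_1(\mb z,\mb y)$ and their $\mb z$-derivatives, all of which vanish for $\mb z\notin K_0$ by \ref{ASSUM2}. Hence the zero solution is preserved for such $\mb z$, giving $\text{supp}\,(a_{j,\bs\beta}^\pm(t,\mb y,\cdot))\subset K_0$ for all $t\in\Real$ and $\mb y\in\Gamma$. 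As indicated, the only nontrivial step is the global existence and smoothness of $M^\pm$ via the Riccati splitting; the rest is a systematic application of Picard's theorem and smooth parameter dependence.
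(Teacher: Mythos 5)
Your proposal is correct and follows essentially the same route as the paper, which simply invokes standard ODE theory for existence, regularity and the support statement, and cites Ralston's Section 2.1 for global solvability of the Riccati equation for $M^\pm$ — precisely the $M^\pm=N^\pm(R^\pm)^{-1}$ splitting you describe. You merely fill in the details the paper delegates to the references (the conservation of $c(\mb q^\pm)|\mb p^\pm|$ giving the lower bound on $|\mb p^\pm|$, and the linearity argument for the amplitude supports), so no further comparison is needed.
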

\begin{proof}
Existence and regularity of the solutions follow from 
standard ODE theory and a
result in \cite[Section 2.1]{Ralston82} which ensures that the non-linear Riccati equations for $M^\pm(t, {\mb y}; {\mb z})$ have solutions for all times and parameter values, with the given initial data.
That $\text{supp} (a_{j,\bs\beta}^\pm(t,\mb y,\cdot))$ stays in
$K_0$ for all times is a consequence of the
form of the ODEs for the amplitude coefficients, given in 
\cite{Ralston82}.
%
%
%
%
	\end{proof}
Finally, the $k$-th order GB superposition solution is defined as a sum of the two modes in \eqref{ukdef},
\be\label{umode}
u_k(t,\mb x,\mb y) = u^+_k(t,\mb x,\mb y) + u^-_k(t,\mb x,\mb y).
\ee
Approximating $u^\eps$ with $u_{k}$ we can define the GB quantity of interest corresponding to \eqref{QoIbig} as
\be\label{QoIbigGB}
	\mc Q_{\text{GB}}^{p,\bs\alpha}(\mb y) = \eps^{2(p+|\bs\alpha|)}\int_\Real\int_{\Real^n} g(t,\mb x,\mb y)|\partial_t^p\partial_{\mb x}^{\bs\alpha}u_k(t,\mb x,\mb y)|^2 \psi(t,\mb x) d\mb x\, dt,
\ee
where $\psi$ is as in \ref{ASSUM6} and $g\in C^\infty(\Real\times\Real^n\times\Gamma)$.

We note that 
for numerical computations with SG or SC combined with GB it is 
indeed the
stochastic regularity of ${\mc Q}_{\text{GB}}^{p,\bs\alpha}$ rather
than of the exact ${\mc Q}^{p,\bs\alpha}$ that is relevant.
Moreover, since 
$u_k$ approximates the exact solution $u^\eps$ 
well, ${\mc Q}_{\text{GB}}^{p,\bs\alpha}$ will also be a good approximation
of ${\mc Q}^{p,\bs\alpha}$. 
For instance, when $p=0$ and $\alpha\neq 0$ one can use the
Sobolev estimate 
$||u_k-u^\eps||_{H^s}\leq C\eps^{k/2-s}$, for $s\geq 1$,  
shown in \cite{LRT:16},
to derive the error bound $|{\mc Q}_{\text{GB}}^{0,\bs\alpha}-{\mc Q}^{0,\bs\alpha}|\leq C\eps^{k/2}$
in the same way as in  \cite{malenova2017stochastic}, where
the case $\alpha=0$ was discussed.
Also, in some cases, like in one dimension with constant speed 
$c(x,y) = c(y)$, the GB solution is exact if the
initial data is exact. Then $\mc Q_{\text{GB}}^{p,\alpha} = \mc Q^{p,\alpha}$.

%



\section{One-mode quantity of interest}\label{sec:onefamily}

Before considering the QoI \eqref{QoIbigGB} it is advantageous to first focus on its one-mode counterpart with $u_k$ consisting of either $u_k = u_k^+$ or $u_k = u_k^-$ only,
as given in \eqref{QoIsmall}.
In the present article, 
this is partly due to the fact that the one-mode QoI will be a stepping stone 
for our analysis of the full two-mode QoI.
However, its examination is also important in its own right.
As the two wave modes propagate in opposite directions
they separate and parts of the domain will mainly be covered
by waves belonging to only one of the modes.
As a simple example, in one dimension with constant speed,
%
%
%
the d'Alembert solution to the 
wave equation is a superposition of a left and a right going wave. 
In the general case, the effect is
more pronounced in the high-frequency regime,
when the wave length is significantly smaller than the
curvature of the wave front
\cite{EngRun:03,Runborg07}. Discarding one of the modes
then amounts to discarding reflected waves and waves that initially
propagate away from the domain of interest. 
The solution will nevertheless contain waves
going in different directions.
For example, if $B_1$ in \eqref{waveeq} is chosen such that $u^\eps$ essentially propagates in one direction, then merely one mode, either $u_k^+$ or $u_k^-$, is sufficient to approximate $u^\eps$. 
The approximation is similar to, but not the same as, 
using the paraxial wave equation 
instead of
the full wave equation, which is a common strategy in areas like
seismology, plasma physics, 
underwater acoustics and optics \cite{BamEtAl:88}.


Let us thus define the GB-approximated version of the QoI in \eqref{QoIsmall},
\be\label{QoIsmallGB}
	\widetilde{\mc Q}^{p,\bs\alpha}_{\text{GB}}(t,\mb y) = \eps^{2(p+|\bs\alpha|)}\int_{\Real^n} g(t,\mb x,\mb y)|\partial_t^p \partial_{\mb x}^{\bs\alpha} u_k(t,\mb x,\mb y)|^2 \psi(t,\mb x) d\mb x,
\ee
with $\psi\in C_c^\infty(\Real\times \Real^n)$ and $g\in C^\infty(\Real\times\Real^n\times \Gamma)$. 
Here 
$u_k = u_k^+$ or $u_k = u_k^-$ in \eqref{umode}. It is not important which one we choose 
and henceforth omit superscripts of all variables.

To introduce the terminology used in this section, we will need the following proposition.
\begin{proposition}\label{prop:admissible}
Assume \ref{ASSUM1}--\ref{ASSUM3} hold. Then for all $T>0$, beam order $k$ and compact $\Gamma_c\subset\Gamma$, there is a GB cutoff width $\eta>0$ and constant $\delta>0$ such that for all $\mb x\in {\mc B}_{2\eta}$,
	\be\label{propphi}
		\Im \Phi_k(t,\mb x,\mb y,\mb z) \geq \delta |\mb x|^2, \quad \forall t\in [0,T], \, \mb y\in \Gamma_c, \, \mb z\in K_0.
	\ee
For the first order GB, $k=1$, we can take $\eta = \infty$ and \eqref{propphi} is valid for all $\mb x\in \Real^n$.
\end{proposition}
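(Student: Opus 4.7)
The plan is to decompose $\Phi_k$ as in \eqref{phidef} and identify which parts contribute to the imaginary part. The constant and linear terms, $\phi_0(t,\mb y,\mb z)$ and $\mb x^T\mb p(t,\mb y,\mb z)$, satisfy the ODEs \eqref{IC} with real right-hand sides and real initial data (by \ref{ASSUM3}), so they remain real for all $t$ and contribute nothing to $\Im\Phi_k$. All imaginary content therefore comes from the quadratic form $\tfrac12\mb x^T M(t,\mb y,\mb z)\mb x$ together with the higher-order piece $\sum_{|\bs\beta|=3}^{k+1}\tfrac{1}{\bs\beta!}\phi_{\bs\beta}(t,\mb y,\mb z)\mb x^{\bs\beta}$.

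For the quadratic term I would invoke the classical result quoted in Proposition~\ref{prop:smooth} (from \cite[Section~2.1]{Ralston82}): the Riccati flow for $M$ preserves positive definiteness of the imaginary part, so since $\Im M(0,\mb y,\mb z)=I$ by \eqref{GBini}, $\Im M(t,\mb y,\mb z)$ is symmetric positive definite for all $(t,\mb y,\mb z)$. Combining this pointwise positivity with the smoothness from Proposition~\ref{prop:smooth} and compactness of $[0,T]\times\Gamma_c\times K_0$, the smallest eigenvalue of $\Im M$ attains a strictly positive minimum on that set, so there exists $\delta_0>0$ with
\[
	\tfrac12\,\mb x^T\,\Im M(t,\mb y,\mb z)\,\mb x \,\geq\, \delta_0\,|\mb x|^2, \qquad \forall\,(t,\mb y,\mb z)\in [0,T]\times\Gamma_c\times K_0,\ \mb x\in\Real^n.
\]

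The remaining sum is controlled by $C_k|\mb x|^3$ for $\mb x\in\mc B_1$, uniformly in $(t,\mb y,\mb z)\in[0,T]\times\Gamma_c\times K_0$, again by Proposition~\ref{prop:smooth} together with compactness of the parameter set. Therefore
\[
	\Im\Phi_k(t,\mb x,\mb y,\mb z) \,\geq\, \delta_0\,|\mb x|^2 - C_k|\mb x|^3 \,=\, |\mb x|^2\bigl(\delta_0 - C_k|\mb x|\bigr),
\]
and choosing $\eta$ small enough that $2\eta C_k \leq \delta_0/2$ and setting $\delta:=\delta_0/2$ yields the claim on $\mc B_{2\eta}$. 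In the first-order case $k=1$ the index set $|\bs\beta|=3,\ldots,k+1=2$ is empty, so no cubic correction appears; the quadratic estimate then holds on all of $\Real^n$ with $\delta=\delta_0$, and one may take $\eta=\infty$.

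The main obstacle is really just the positivity preservation of $\Im M$ along the nonlinear Riccati flow, which is classical and already quoted from \cite{Ralston82} in Proposition~\ref{prop:smooth}. Everything else is a smoothness-and-compactness argument together with a Taylor-remainder-style absorption of the cubic and higher-order terms by choosing the cutoff radius $\eta$ sufficiently small.
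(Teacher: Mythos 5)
Your proposal is correct. Note that the paper itself does not prove this proposition in-text: it simply cites Property (P4) of Proposition 1 in \cite{malenova2017stochastic}, whose proof lives in \cite{malenova2016uncertainty}; your argument is the standard one that those references use, so in substance you have reconstructed the deferred proof rather than found a different route. The decomposition (real $\phi_0$ and $\mb p$ by realness of the ODEs and initial data, positive-definiteness of $\Im M$ preserved by the Riccati flow, uniform eigenvalue bound by compactness, absorption of the $|\bs\beta|\ge 3$ terms by shrinking $\eta$, and the empty higher-order sum when $k=1$) is exactly right. The only presentational caveat is that Proposition \ref{prop:smooth} as stated in this paper asserts only existence and smoothness of $M$, not that $\Im M$ stays symmetric positive definite; that latter fact is indeed in \cite[Section 2.1]{Ralston82}, but you should cite it for that property directly rather than attribute it to Proposition \ref{prop:smooth}.
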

\begin{proof}
	Property (P4) in Proposition 1 in \cite{malenova2017stochastic}. The proof is in \cite{malenova2016uncertainty}.
\end{proof}
Note that $\eta$ is the width of the cutoff function $\varrho_\eta$ in \eqref{cutoff} used in the GB superposition \eqref{ukdef}.
\begin{definition}\label{def:admissible}
	The cutoff width $\eta$ used for the GB approximation is called admissible for a given $T$, $k$ and $\Gamma_c$ if it is small enough in the sense of Proposition \ref{prop:admissible}.
\end{definition}
We will prove the following main theorem.

\begin{theorem}\label{th:new}
Assume \ref{ASSUM1}--\ref{ASSUM6} hold and consider a one-mode GB solution. Moreover, let $\eta$ be admissible 
for $T>0$, $k$ and a compact $\Gamma_c\subset \Gamma$. Then for all $p\in \mathbb N$ and $\bs\alpha\in \mathbb N_0^N$, there exist $C_{\bs\sigma}$ such that
\[
	\sup_{\substack{\mb y\in\Gamma_c\\ t\in [0,T]}} \left|\frac{\partial^{\bs\sigma} \widetilde{\mc Q}^{p,\bs\alpha}_{\text{GB}}(t,\mb y)}{\partial \mb y^{\bs\sigma}}\right|\leq C_{\bs\sigma}, \quad \forall \bs\sigma \in \mathbb N_0^N,
\]
where $C_{\bs\sigma}$ is independent of $\eps$ but depends on $T,k$ and $\Gamma_c$.
\end{theorem}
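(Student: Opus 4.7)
The plan is to reduce the estimate to a controlled Gaussian integral in rescaled variables. First I would expand $|\partial_t^p\partial_{\mb x}^{\bs\alpha} u_k|^2$ as a double integral in the starting points $\mb z,\mb z'\in K_0$ using \eqref{ukdef}, so that
\[
\widetilde{\mc Q}^{p,\bs\alpha}_{\text{GB}}(t,\mb y) = \eps^{-n}(2\pi)^{-n}\iiint H(t,\mb x,\mb y)\,\mc A(t,\mb x,\mb y,\mb z,\eps)\,\overline{\mc A(t,\mb x,\mb y,\mb z',\eps)}\, e^{i(\Phi_k-\overline{\Phi_k'})/\eps}\, d\mb x\, d\mb z\, d\mb z',
\]
where $H=g\psi$ and $\mc A$ collects what is left of $\partial_t^p\partial_{\mb x}^{\bs\alpha}(A_k\varrho_\eta e^{i\Phi_k/\eps})$ after factoring out the exponential. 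The prefactor $\eps^{2(p+|\bs\alpha|)}$ exactly absorbs the worst $\eps^{-(p+|\bs\alpha|)}$ factor produced each time a derivative lands on $e^{i\Phi_k/\eps}$, so $\mc A$ is a polynomial in $\eps$ with smooth, compactly supported coefficients in virtue of Proposition~\ref{prop:smooth} and assumption \ref{ASSUM6}.

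Next I would differentiate under the integral and rescale. Applying $\partial_{\mb y}^{\bs\sigma}$ by the Leibniz rule, the only potentially dangerous factors are those from derivatives landing on $e^{i(\Phi_k-\overline{\Phi_k'})/\eps}$, each bringing $\eps^{-1}\partial_{\mb y}(\Phi_k-\overline{\Phi_k'})$. To tame this, I change variables
\[
\mb x = \mb q(t,\mb y,\mb z)+\sqrt{\eps}\,\mb w, \qquad \mb z'=\mb z+\sqrt{\eps}\,\mb s,
\]
whose Jacobian $\eps^{n}$ cancels the $\eps^{-n}$ prefactor. This change is permissible because admissibility of $\eta$ (Definition~\ref{def:admissible}) together with Proposition~\ref{prop:admissible} localises the integrand near the rays.

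The heart of the argument is to prove that in the rescaled variables $\chi:=(\Phi_k-\overline{\Phi_k'})/\eps$ and each derivative $\partial_{\mb y}^{\bs\tau}\chi$ are smooth in $\sqrt{\eps}$ and grow at most polynomially in $(\mb w,\mb s)$. A direct substitution into \eqref{phidef} produces leading terms of order $\eps^{-1/2}$, of the form
\[
\eps^{-1/2}\bigl((\mb w-\mb w')^T\mb p - \mb s^T\nabla_{\mb z}\phi_0\bigr),
\]
with $\mb w-\mb w'=\nabla_{\mb z}\mb q\,\mb s+O(\sqrt\eps)$; these vanish thanks to the Hamilton--Jacobi identity $\nabla_{\mb z}\phi_0=(\nabla_{\mb z}\mb q)^T\mb p$, which is preserved by the flow \eqref{IC} with initial data \eqref{GBini}. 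Differentiating this identity in $\mb y$ eliminates the analogous $\eps^{-1/2}$ singularities that appear when $\partial_{\mb y}^{\bs\tau}$ acts on the phase, leaving each $\partial_{\mb y}^{\bs\tau}\chi$ polynomially bounded in $(\mb w,\mb s)$. Combined with the Gaussian decay
\[
\Im\chi \ge \delta\bigl(|\mb w|^2+|\mb w-\nabla_{\mb z}\mb q\,\mb s|^2\bigr)+O(\sqrt\eps),
\]
supplied by Proposition~\ref{prop:admissible}, the Leibniz-rule sum is integrable uniformly in $\eps$ and in $t\in[0,T]$, yielding the constant $C_{\bs\sigma}$.

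The main obstacle, and the part requiring most care, is that the quadratic form $|\mb w|^2+|\mb w-\nabla_{\mb z}\mb q\,\mb s|^2$ loses coercivity in $\mb s$ at caustics, where $\nabla_{\mb z}\mb q$ is singular. To close the argument in this case one must either extract additional decay in the null directions from the subleading $\sqrt{\eps}$ corrections to $\Im\chi$ coming from the higher order $\phi_{\bs\beta}^\pm$ in \eqref{phidef}, or partition the $\mb z$-integration near caustic points and rescale differently on each piece. A secondary technical point is to verify that the cutoff $\varrho_\eta$ contributes only harmless terms, which follows because its derivatives are supported away from the beam centres where the exponent is $O(1)$ negative, giving an $O(\eps^\infty)$ contribution that is negligible.
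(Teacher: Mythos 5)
Your reduction of the problem is sound and matches the paper's strategy in its first half: expanding $|\partial_t^p\partial_{\mb x}^{\bs\alpha}u_k|^2$ as a double integral over $\mb z,\mb z'\in K_0$, and observing that the prefactor $\eps^{2(p+|\bs\alpha|)}$ exactly absorbs the negative powers of $\eps$ produced when derivatives fall on $e^{i\Phi_k/\eps}$, is precisely the content of the paper's Lemma~\ref{lemma:partialwk} and Corollary~\ref{cor:mixedder} (organized there through the algebra of the classes $\mc P_\eta$, $\mc S_\eta$). Your cancellation of the $\eps^{-1/2}$ terms in the rescaled phase via the identity $\nabla_{\mb z}\phi_0=(\nabla_{\mb z}\mb q)^T\mb p$ is also correct and is the analogue of the quadratic vanishing of $\partial_{y_m}\Psi_k$ at $\mb x=0$, $\mb z=\mb z'$ recorded in Lemma~\ref{lemma:temp} of the Appendix.

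However, there is a genuine gap at exactly the point you flag as ``the main obstacle,'' and neither of your two proposed escapes closes it. After the rescaling $\mb x=\mb q+\sqrt{\eps}\,\mb w$, $\mb z'=\mb z+\sqrt{\eps}\,\mb s$, the only decay supplied by Proposition~\ref{prop:admissible} is
\[
\Im\chi\gtrsim |\mb w|^2+|\mb w-\nabla_{\mb z}\mb q\,\mb s|^2+O(\sqrt{\eps}),
\]
which gives no control over $\mb s$ in the kernel of $\nabla_{\mb z}\mb q$, i.e.\ at caustics --- and caustics are unavoidable under assumptions \ref{ASSUM1}--\ref{ASSUM3} for $n\ge 1$ and generic $\varphi_0$, $c$. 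Your first suggested fix (extracting positivity of $\Im\chi$ in the null directions from the cubic and higher terms $\phi_{\bs\beta}$ in \eqref{phidef}) cannot work: those terms carry no sign and in the constant-coefficient, plane-phase case they vanish identically while caustic-free coercivity in $\mb s$ still fails pointwise in $\mb x$. The correct mechanism, which is what the omitted Lemmas 5 and 6 of \cite{malenova2017stochastic} (invoked through Theorem~\ref{th:oldmodified}) actually provide, is not decay of $\Im\chi$ but \emph{oscillation}: when $\nabla_{\mb z}\mb q\,\mb s$ degenerates, the non-degeneracy of the bicharacteristic flow in phase space forces $|\mb p(t,\mb y,\mb z)-\mb p(t,\mb y,\mb z')|\gtrsim|\mb z-\mb z'|$, so $\nabla_{\mb x}\Re\Theta_k$ is bounded below and integration by parts in $\mb x$ (non-stationary phase in the spatial variable) recovers the missing powers of $\eps/|\mb z-\mb z'|$. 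Without this ingredient --- or a worked-out version of your second alternative, a caustic-adapted partition and anisotropic rescaling, which you do not carry out --- the $d\mb s$ integral is not uniformly convergent and the claimed bound does not follow. The paper sidesteps the issue by reducing, via the $\mc S_\eta$ calculus, to the oscillatory integral \eqref{I0} and citing Theorem~\ref{th:oldmodified}, whose proof contains the phase-space non-squeezing argument.
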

The proof of Theorem \ref{th:new} is presented in Section \ref{sec:th:new}.


Let us also recall the known results regarding the simplest version of the QoI \eqref{QoIsmallGB}, 
\be\label{QoIsmall0GB}
	\widetilde{\mc Q}_{\text{GB}}:= \widetilde{\mc Q}_{\text{GB}}^{0,\mb 0} = \int_{\Real^n} |u_k(t,\mb x,\mb y)|^2 \psi(t,\mb x) d\mb x,
\ee
which were
obtained in \cite{malenova2017stochastic}. 
\begin{theorem}[{\cite[Theorem 1]{malenova2017stochastic}}]\label{th:old}
Assume \ref{ASSUM1}--\ref{ASSUM6} hold and consider a one-mode GB solution. Moreover, let $\eta$ be admissible 
for $T>0$, $k$ and a compact $\Gamma_c\subset \Gamma$. Then there exist $C_{\bs\sigma}$ such that
\[
	\sup_{\substack{\mb y\in\Gamma_c\\ t\in[0,T]}} \left|\frac{\partial^{\bs\sigma} \widetilde{\mc Q}_{\text{GB}}(t,\mb y)}{\partial \mb y^{\bs\sigma}}\right|\leq C_{\bs\sigma}, \quad \forall \bs\sigma \in \mathbb N_0^N,
\]
where $C_{\bs\sigma}$ is independent of $\eps$ but depends on $T$, $k$ and $\Gamma_c$.
\end{theorem}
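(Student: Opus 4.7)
The strategy is to reduce Theorem~\ref{th:new} to Theorem~\ref{th:old} by showing that the scaled derivative $\eps^{p+|\bs\alpha|}\partial_t^p\partial_{\mb x}^{\bs\alpha}u_k$ is, up to an exponentially small remainder, again a Gaussian beam superposition with the \emph{same} phase $\Phi_k$ but a modified, $\eps$-polynomial amplitude $\tilde A(t,\mb x-\mb q,\mb y,\mb z;\eps)$ that is smooth, uniformly bounded in $\eps$, and compactly supported in its second argument. Inserted into \eqref{QoIsmallGB}, this produces a finite sum of expressions structurally identical to $\widetilde{\mc Q}_{\text{GB}}$ in \eqref{QoIsmall0GB} (with weight $g\psi$ replacing $\psi$), and the desired $\eps$-uniform bounds follow from Theorem~\ref{th:old}.

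\textbf{Step 1: rewrite the scaled derivative.} Starting from \eqref{ukdef}, differentiate under the $\mb z$-integral. Each spatial or time derivative applied to $A_k\varrho_\eta e^{i\Phi_k/\eps}$ yields a factor $\tfrac{i}{\eps}(\partial\Phi_k)\cdot A_k\varrho_\eta + \partial(A_k\varrho_\eta)$, with time derivatives further pulling in $\dot{\mb q}$ via \eqref{IC}. After multiplication by $\eps^{p+|\bs\alpha|}$ all singular $\eps^{-1}$ powers cancel, and a Leibniz/Fa\`a di Bruno expansion yields
\[
\eps^{p+|\bs\alpha|}\partial_t^p\partial_{\mb x}^{\bs\alpha}u_k(t,\mb x,\mb y) = \Bigl(\tfrac{1}{2\pi\eps}\Bigr)^{\!n/2}\!\int_{K_0}\tilde A(t,\mb x-\mb q,\mb y,\mb z;\eps)\,e^{i\Phi_k(t,\mb x-\mb q,\mb y,\mb z)/\eps}\,d\mb z + r(t,\mb x,\mb y;\eps),
\]
where $\tilde A=\sum_{j=0}^{p+|\bs\alpha|}\eps^j \tilde A_j$ has coefficients $\tilde A_j$ that are smooth in $(t,\mb x,\mb y,\mb z)$ by Proposition~\ref{prop:smooth}, supported in $|\mb x|\leq 2\eta$ by $\varrho_\eta$, and uniformly bounded on $[0,T]\times\Real^n\times\Gamma_c\times K_0$. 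The remainder $r$ collects the terms in which at least one derivative falls on $\varrho_\eta$, so it is supported on the shell $\eta\leq|\mb x-\mb q|\leq 2\eta$; on this shell admissibility (Proposition~\ref{prop:admissible}) gives $\Im\Phi_k\geq\delta\eta^2$ and hence $|e^{i\Phi_k/\eps}|\leq e^{-\delta\eta^2/\eps}$, so $r$ and all its $(t,\mb x,\mb y)$-derivatives are $O(\eps^M)$ for every $M$ uniformly on $[0,T]\times K_1\times\Gamma_c$. For $k=1$ one may take $\eta=\infty$ and $r\equiv 0$.

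\textbf{Step 2: apply Theorem~\ref{th:old}.} From Step~1,
\[
\eps^{2(p+|\bs\alpha|)}|\partial_t^p\partial_{\mb x}^{\bs\alpha}u_k|^2 = |\tilde u_k|^2 + O(\eps^M),\qquad \tilde u_k := \Bigl(\tfrac{1}{2\pi\eps}\Bigr)^{\!n/2}\!\int_{K_0}\tilde A\,e^{i\Phi_k/\eps}\,d\mb z,
\]
so $\tilde u_k$ is itself (a finite $\eps$-weighted sum of) one-mode Gaussian beam superpositions of the form \eqref{ukdef} with the \emph{same} phase $\Phi_k$ and with smooth, $\mb x$-compactly supported amplitudes $\tilde A_j$. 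Substituting into \eqref{QoIsmallGB} and noting that $g\psi\in C_c^\infty(\Real\times\Real^n)$ with smooth $\mb y$-dependence, $\widetilde{\mc Q}^{p,\bs\alpha}_{\text{GB}}(t,\mb y)$ becomes a finite sum of quantities exactly of the type $\widetilde{\mc Q}_{\text{GB}}(t,\mb y)$ covered by Theorem~\ref{th:old}, plus an $O(\eps^M)$ tail whose $\mb y$-derivatives are also $O(\eps^M)$. Since the constants $C_{\bs\sigma}$ in Theorem~\ref{th:old} depend only on $C^\infty$-bounds of the amplitude and weight over the relevant compact sets (not on the specific choices $A_k$, $\psi$), applying it termwise produces the claimed $\eps$-uniform bound on $|\partial^{\bs\sigma}\widetilde{\mc Q}^{p,\bs\alpha}_{\text{GB}}/\partial\mb y^{\bs\sigma}|$.

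\textbf{Main obstacle.} The principal technical burden lies in Step~1: bookkeeping the Leibniz/Fa\`a di Bruno expansion to verify that every surviving term really has the common phase $\Phi_k$ with all $\eps^{-1}$ factors absorbed, and that $\tilde A$ genuinely inherits joint smoothness, $\mb x$-compact support, and $\eps$-uniform boundedness — all of which rely on Proposition~\ref{prop:smooth} providing $C^\infty$-regularity of $\mb q,\mb p,M,\phi_{\bs\beta},a_{j,\bs\beta}$ jointly in $(t,\mb y,\mb z)$. A secondary subtlety is that invoking Theorem~\ref{th:old} with the new amplitude requires admissibility of $\eta$ to still hold; this causes no issue because \eqref{propphi} is a property of $\Phi_k$ alone and is uniform in $\mb y\in\Gamma_c$.
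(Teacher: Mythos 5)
Your proposal does not prove the statement in question. The statement is Theorem~\ref{th:old}, the bound on $\widetilde{\mc Q}_{\text{GB}}=\widetilde{\mc Q}_{\text{GB}}^{0,\mb 0}$ itself; your argument instead establishes the generalization Theorem~\ref{th:new} (the case of $\widetilde{\mc Q}^{p,\bs\alpha}_{\text{GB}}$ with general $p$, $\bs\alpha$ and weight $g$) by invoking Theorem~\ref{th:old} as a black box in Step~2. Specialized to the statement actually asked for ($p=0$, $\bs\alpha=\mb 0$, $g\equiv 1$), your Step~1 becomes vacuous ($\tilde A = A_k\varrho_\eta$, $r\equiv 0$) and Step~2 reads ``apply Theorem~\ref{th:old}'', i.e.\ the argument is circular. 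The paper handles Theorem~\ref{th:old} by citation to \cite[Theorem~1]{malenova2017stochastic}, with a remark that the $t$-dependence of $\psi$ and the compact-subset formulation of $\Gamma_c$ are harmless modifications; the substantive work that a self-contained proof would require is the oscillatory-integral estimate of Theorem~\ref{th:oldmodified} in the Appendix: write $\widetilde{\mc Q}_{\text{GB}}$ as $(2\pi\eps)^{-n}\int_{K_0\times K_0}I(t,\mb y,\mb z,\mb z')\,d\mb z\,d\mb z'$, track the $\mb y$-derivatives of the double-beam phase $\Theta_k$ (Proposition~\ref{prop:bigtheorem}), and beat both the $\eps^{-n}$ prefactor and the negative powers of $\eps$ produced by differentiation using the quadratic lower bound on $\Im\Theta_k$ guaranteed by admissibility. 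None of this analysis appears in your proposal.

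A secondary point, relevant if your text were repositioned as a proof of Theorem~\ref{th:new}: even there, Theorem~\ref{th:old} as \emph{stated} applies to the specific superposition \eqref{ukdef} with the GB amplitudes \eqref{Adef1}, not to an arbitrary superposition with smooth compactly supported amplitudes, and your modified amplitudes $\tilde A_j$ acquire extra polynomial factors in $\mb x-\mb q$ from differentiating the phase. Your assertion that the constants in Theorem~\ref{th:old} ``depend only on $C^\infty$-bounds of the amplitude and weight'' is a claim about its proof, not its statement, so the reduction cannot avoid reopening that proof. This is exactly why the paper formulates the strengthened Theorem~\ref{th:oldmodified}, which admits the factors $(\mb x-\mb q(t,\mb y,\mb z))^{\bs\alpha}(\mb x-\mb q(t,\mb y,\mb z'))^{\bs\beta}$ and general coefficients in $\mc T_\eta$, and applies that instead of Theorem~\ref{th:old}.
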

\begin{remark}
This is a minor
generalization of Theorem~1 
 in \cite{malenova2017stochastic}.
 In particular we here allow $\psi$ to also depend on $t$
 and have an estimate that is uniform in $t$. Moreover,
 instead of
assuming $\Gamma$ to be the closure of a bounded open set, as in \cite{malenova2017stochastic}, we consider compact subsets $\Gamma_c$ of an open set $\Gamma$. These modifications do not affect the proof
in a significant way.

\end{remark}

\begin{remark}
One can note that the stochastic regularity in $\mb y$
shown in Theorem~\ref{th:new} also implies stochastic regularity
in $t$ for the same QoI. Indeed, upon defining
$$
   v^\eps(t,\mb x,\mb y,y_0):= u^\eps(ty_0,\mb x,\mb y),
$$
$v^\eps$ will satisfy the same wave equation as
$u^\eps$, with $c(\mb x,\mb y)$ replaced by $y_0c(\mb x,\mb y)$
and $B_1(\mb x,\mb y)$ replaced by $y_0B_1(\mb x,\mb y)$. One can verify that 
with these alterations, the Gaussian beam approximations of $u^\eps$
and $v^\eps$ also satisfy the same equations.
Moreover, for a fixed $t$,
time derivatives of the QoI based on $u^\eps$ corresponds to
partial derivatives in $y_0$ for the QoI based on $v^\eps$,
which is covered by the theory above.
However, making this observation precise, we leave for
future work.
\end{remark}

\subsection{Preliminaries}\label{sec:onemodeprel}

In this section we introduce functions spaces and
derive some preliminary results for the
main proof of Theorem \ref{th:new}.
However, we start with a note on the case $\eta=\infty$, which is sometimes an admissible cutoff width in the sense of Proposition~\ref{prop:admissible}. In particular, it is always admissible when $k=1$. It amounts to removing the cutoff functions $\varrho_\eta$ in \eqref{ukdef} altogether. This is convenient in
computations, but there are some technical issues with having
$\eta=\infty$ in the proofs below. We note, however, that, in any finite
time interval $[0,T]$ and compact $\Gamma_c\subset\Gamma$, the Gaussian beam superposition \eqref{umode} with no
cutoff is identical to the one with a large enough cutoff, because of
the compact
support of the test function $\psi(t,\mb x)$. Indeed, suppose 
$\text{supp }\psi(t,\cdot)\subset {\mc B}_{R}$, for $t\in[0,T]$.
Then for $|\mb x|\leq R$ we have
$$
|{\bf x}- {\bf q}(t,\mb y,\mb z)| \leq |{\bf x}|+|{\bf q}(t,\mb y,\mb z)|\leq
R+|{\bf q}(t,\mb y,\mb z)|, \qquad \forall t\in[0,T],\ \forall \mb y\in\Gamma,\ \forall \mb z, \in K_0.
$$
Hence, for $\bar\eta = R+\sup_{t\in[0,T],\mb y\in\Gamma_c, \mb z\in
K_0}|\mb q(t,\mb y,\mb z)|$ we will have
$$
   \psi(t,\mb x)=
   \varrho_{\bar\eta}({\bf x}-{\bf q}(t,\mb y,\mb z))
   \varrho_{\bar\eta}({\bf x}-{\bf q}(t,\mb y,\mb z'))\psi(t,\mb x),\qquad
\forall t\in[0,T],\ \forall \mb y\in\Gamma_c,\ \forall \mb z,\mb z'\in K_0.
$$
We can therefore, without loss of
generality, assume that $\eta<\infty$. 

Let us now define a shorthand for the following sets:
\begin{itemize}
%
%
\item 
$
\mc P_\mu := \Bigl\{p\in C^\infty: p(t,\mb x,\mb y,\mb z) = \sum_{|\bs\alpha|=0}^M a_{\bs\alpha}(t,\mb x,\mb y,\mb z) \: \mb x^{\bs\alpha},\:  \text{where}\: a_{\bs\alpha}\in C^\infty, \\ 
\text{\hspace{13 mm}and supp}\, a_{\bs\alpha}(t,\,\cdot\,,\mb y,\mb z)\subset {\mc B}_{2\mu}, \: \forall \bs\alpha,\: t\in \Real,\: \mb y\in \Gamma,\: \mb z\in \Real^n\Bigr\},
$

%

\item $\mc S_\mu := \Bigl\{f\in C^\infty: f(t,\mb x,\mb y,\mb z) = \sum_{j=0}^L \eps^j p_j(t,\mb x,\mb y,\mb z) e^{i\Phi_k(t,\mb x,\mb y,\mb z)/\eps}, \; \text{where}\: p_j\in \mc P_\mu, \: \forall j\Bigr\}$.
\end{itemize}
Note that these sets are also defined for $\mu=\infty$,
in which case there is no restriction on the support
of the coefficient functions $a_{\bs\alpha}$
since ${\mc B}_\infty = \Real^n$.
The phase $\Phi_k$ in the definition of $\mc S_\mu$ is as in \eqref{phidef}. 
By Proposition \ref{prop:smooth}, it can be written as $\Phi_k(t,\mb x,\mb y,\mb z) = \sum_{|\bs\alpha|=0}^{k+1}d_{\bs\alpha}(t,\mb y,\mb z)\, \mb x^{\bs\alpha}$, with $d_{\bs\alpha}\in C^{\infty}(\Real\times \Gamma\times \Real^n)$ and hence $\Phi_k\in \mc P_\infty$. 
The following properties hold for the sets defined above.
\begin{lemma}\label{lemma:PSprop}
Let $r\in \mc P_\infty$, $p_1, p_2 \in \mc P_\mu$ and $w_1, w_2 \in \mc S_\mu$. Then, for $0<\mu\leq \infty$,
\begin{enumerate}
	\item\label{lem1} $p_1+p_2\in \mc P_\mu$.
	\item\label{lem2} $w_1+w_2\in \mc S_\mu$.
	\item\label{lem3} $r p_1\in \mc P_\mu$.
	\item\label{lem8}	$r w_1 \in \mc S_\mu$.
	\item\label{lem46} $\partial_s p_1\in \mc P_\mu$, for $s\in \{t,{x_\ell},\: \ell = 1,\ldots n\}$.
	\item\label{lem7} $\eps \partial_{s} w_1 \in \mc S_\mu,$ for $s\in \{t,x_\ell,\: \ell = 1,\ldots n\}$.
\end{enumerate}
\end{lemma}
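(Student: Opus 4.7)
The plan is to verify each of the six closure properties directly from the definitions. The first four items are routine algebraic manipulations: I would expand any $p\in\mc P_\mu$ as $\sum a_{\bs\alpha}\mb x^{\bs\alpha}$ and any $w\in\mc S_\mu$ as $\sum_j \eps^j p_j e^{i\Phi_k/\eps}$, and then collect terms by multi-index. For item 1, writing $p_1+p_2=\sum (a^{(1)}_{\bs\alpha}+a^{(2)}_{\bs\alpha})\mb x^{\bs\alpha}$ immediately works, the new coefficients being smooth and supported in $\mc B_{2\mu}$. Item 2 follows by factoring the common $e^{i\Phi_k/\eps}$ and applying item 1 to the amplitude polynomials. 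For item 3, the Cauchy-product expansion $rp_1=\sum_{\bs\gamma} c_{\bs\gamma}\mb x^{\bs\gamma}$ with $c_{\bs\gamma}=\sum_{\bs\alpha+\bs\beta=\bs\gamma} a_{\bs\alpha}b_{\bs\beta}$ has smooth coefficients whose $\mb x$-support is contained in that of the $a_{\bs\alpha}$, hence in $\mc B_{2\mu}$; the absence of a support restriction on $r\in\mc P_\infty$ is harmless because multiplication by $b_{\bs\beta}$ never enlarges support. Item 4 is then item 3 applied termwise to the amplitude part of $w_1$.

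For item 5, I would apply the product rule: $\partial_t p_1=\sum (\partial_t a_{\bs\alpha})\mb x^{\bs\alpha}$ has the same structure with smooth coefficients of identical $\mb x$-support. For $s=x_\ell$, the chain rule additionally produces terms $\alpha_\ell a_{\bs\alpha}\mb x^{\bs\alpha-e_\ell}$, whose coefficients $\alpha_\ell a_{\bs\alpha}$ also have $\mb x$-support in $\mc B_{2\mu}$; after reindexing the two contributions still fit the template defining $\mc P_\mu$.

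Item 6 is the only case involving the interaction between amplitude and phase, and it is where the scaling in the definition of $\mc S_\mu$ pays off. The chain rule gives
\[
\eps\,\partial_s\bigl(p_j e^{i\Phi_k/\eps}\bigr)=\eps(\partial_s p_j)e^{i\Phi_k/\eps}+ i\,p_j(\partial_s\Phi_k)e^{i\Phi_k/\eps},
\]
where the $1/\eps$ from differentiating the exponential is exactly cancelled by the prefactor $\eps$. Since $\Phi_k\in\mc P_\infty$ (as noted after the definitions), item 5 gives $\partial_s\Phi_k\in\mc P_\infty$, and item 3 then places $p_j\partial_s\Phi_k$ in $\mc P_\mu$; item 5 also yields $\partial_s p_j\in\mc P_\mu$. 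Summing over $j$ with the appropriate powers of $\eps$ and invoking items 1 and 2, one obtains $\eps\partial_s w_1\in\mc S_\mu$.

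There is really no hard step here: the definitions of $\mc P_\mu$ and $\mc S_\mu$ were tailored so that addition, multiplication by an element of $\mc P_\infty$, and the rescaled derivatives $\partial_t$, $\eps\partial_s$ preserve the two classes. The only thing to monitor throughout is that the coefficient functions remain supported in $\mc B_{2\mu}$ in the $\mb x$-variable, which holds because none of the operations (pointwise multiplication, $\partial_t$, or $\partial_{x_\ell}$) enlarges that support, and because the $\mc P_\infty$ factors coming from the phase carry no support constraint to violate.
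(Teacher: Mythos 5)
Your proposal is correct and follows essentially the same route as the paper: termwise expansion, Cauchy products for items 3--4, product/chain rule for item 5, and for item 6 the cancellation of the $1/\eps$ from the exponential by the prefactor $\eps$, combined with $\partial_s\Phi_k\in\mc P_\infty$ and the earlier closure properties. The only cosmetic difference is that you invoke item 3 on $p_j\,\partial_s\Phi_k$ and reassemble, whereas the paper applies item 4 directly to the whole sum; these are equivalent.
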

\begin{proof}
We will denote 
\begin{align*}
	p_m(t,\mb x,\mb y,\mb z) &= \sum_{|\bs\alpha|=0}^{M_m} a_{m,\bs\alpha}(t,\mb x,\mb y,\mb z)\, \mb x^{\bs\alpha}, 
	&& w_m(t,\mb x,\mb y,\mb z) = \sum_{j=0}^{L_m} \eps^j q_{m,j}(t,\mb x,\mb y,\mb z) e^{i\Phi_k(t,\mb x,\mb y,\mb z)/\eps},\\
r(t,\mb x,\mb y,\mb z) &= \sum_{|\bs\gamma|=0}^M c_{\bs\gamma}(t,\mb x,\mb y,\mb z) \: \mb x^{\bs\gamma},&& m \in \{1,2\}.
\end{align*}
Let us assume without loss of generality that $M_2\geq M_1$ and $L_2\geq L_1$.
	\begin{enumerate}
			\item  The sum $p_1+p_2$ can be rewritten as 
			$p_1+p_2 = \sum_{|\bs\beta|=0}^{M_2} b_{\bs\beta}(t,\mb x,\mb y,\mb z)\, \mb x^{\bs\beta}$, where $b_{\bs\beta}$ is such that
\[
	b_{\bs\beta} = \left\{
				\begin{array}{ll}
						a_{1,\bs\beta} + a_{2,\bs\beta}, & \text{for } \, |\bs\beta|\leq M_1, \\
						a_{2,\bs\beta}, & \text{for } \, M_1<|\bs\beta|\leq M_2.
				\end{array}\right.
\]
Hence $b_{\bs\beta}\in C^\infty$ and $\text{supp } b_{\bs\beta}(t,\cdot,\mb y,\mb z)\subset {\mc B}_{\mu}$, for all $t\in \Real, \: \mb y\in \Gamma, \: \mb z\in \Real^n$. Therefore $p_1+p_2\in \mc P_\mu$.
\item The sum $w_1+w_2$ can be rewritten as $w_1+w_2 = \sum_{j=0}^{L_2} \eps^j q_j(t,\mb x,\mb y,\mb z) e^{i\Phi_k(t,\mb x,\mb y,\mb z)/\eps}$, where $q_{j}$ is such that
\[
	q_{j} = \left\{
				\begin{array}{ll}
						q_{1,j} + q_{2,j}, & \text{for } \, j\leq L_1, \\
						q_{2,j}, & \text{for } \, L_1<j\leq L_2.
				\end{array}\right.
\]
By point \ref{lem1} we have that $q_j\in \mc P_\mu$ for all $j$ and therefore $w_1+w_2\in \mc S_\mu$.

\item We have
\begin{align*}
	r(t,\mb x,\mb y,\mb z) p_1(t,\mb x,\mb y,\mb z) &= \sum_{|\bs\gamma|=0}^{M} c_{\bs\gamma}(t,\mb x,\mb y,\mb z) \mb x^{\bs\gamma}\sum_{|\bs\alpha|=0}^{M_1} a_{1,\bs\alpha}(t,\mb x,\mb y,\mb z) \mb x^{\bs\alpha}  \\
	&= \sum_{|\bs\delta| = 0}^{M_1+M} d_{\bs\delta}(t,\mb x,\mb y,\mb z) \mb x^{\bs\delta},
\end{align*}
where $d_{\bs\delta} = \sum_{\bs\alpha+\bs\gamma = \bs\delta} a_{1,\bs\alpha}c_{\bs\gamma}\in C^\infty$.
Since $\text{supp} \, a_{1,\bs\alpha}(t,\cdot,\mb y,\mb z)\subset {\mc B}_{\mu}$, we also have \break $\text{supp}\, d_{\bs\delta}(t,\cdot,\mb y,\mb z)\subset {\mc B}_{\mu}$ for all $t\in \Real,\: \mb y\in \Gamma, \: \mb z\in \Real^n$ and therefore $rp_1\in \mc P_\mu$.

\item We have
\[
r(t,\mb x,\mb y,\mb z) w_1(t,\mb x,\mb y,\mb z) = \sum_{j=0}^{L_1} \eps^j r(t,\mb x,\mb y,\mb z)q_{1,j}(t,\mb x,\mb y,\mb z) e^{i\Phi_k(t,\mb x,\mb y,\mb z)/\eps},
\]
where $rq_{1,j}\in \mc P_\mu$ by point \ref{lem3} for all $j$. Therefore $r w_1\in \mc S_\mu$.

\item The time derivative of $p_1$ reads	
$\partial_t p_1(t,\mb x,\mb y,\mb z) = \sum_{|\bs\alpha|=0}^{M_1} \partial_t  a_{1,\bs\alpha}(t,\mb x,\mb y,\mb z) \: \mb x^{\bs\alpha},$ 
and since $\text{supp}\, \partial_t  a_{1,\bs\alpha}(t,\cdot,\mb y,\mb z)\subset {\mc B}_{\mu}$ for all $t\in \Real,\: \mb y\in \Gamma, \: \mb z\in \Real^n$, we have $\partial_t p_1\in \mc P_\mu$.
%
%
%
Secondly, the derivative of $p_1$ with respect to $x_\ell$ reads
\[
	\partial_{x_\ell} p_1(t,\mb x,\mb y,\mb z) = \underbrace{\sum_{|\bs \alpha|=0}^{M_1} \partial_{x_\ell} a_{1,\bs\alpha}(t,\mb x,\mb y,\mb z) \, \mb x^{\bs\alpha}}_{\textcircled{\raisebox{-0.9pt}{1}}} + \underbrace{\sum_{|\bs\alpha|=0}^{M_1} a_{1,\bs\alpha} (t,\mb x,\mb y,\mb z) \alpha_\ell \,\mb x^{\bs\alpha-\mb e_\ell}}_{\textcircled{\raisebox{-0.9pt}{2}}}.
\]
Since $\text{supp}\,  \partial_{x_\ell} a_{1,\bs\alpha}(t,\cdot,\mb y,\mb z)\subset {\mc B}_{\mu}$ for all $t\in \Real,\: \mb y\in \Gamma, \: \mb z\in \Real^n$, we have ${\textcircled{\raisebox{-0.9pt}{1}}}\in \mc P_\mu$. For ${\textcircled{\raisebox{-0.9pt}{2}}}$, there exist $c_{\bs\gamma}\in C^\infty$ such that ${\textcircled{\raisebox{-0.9pt}{2}}} = \sum_{|\bs\gamma|=0}^{M_1-1} c_{\bs\gamma} (t,\mb x,\mb y,\mb z) \, \mb x^{\bs\gamma}$
with $\text{supp}\, c_{\bs\gamma}(t,\cdot,\mb y,\mb z)\subset {\mc B}_{\mu}$ for all $t\in \Real,\: \mb y\in \Gamma, \: \mb z\in \Real^n$ and hence $\textcircled{\raisebox{-0.9pt}{2}}\in \mc P_\mu$. By point \ref{lem1}, $\partial_{x_\ell} p_1 = {\textcircled{\raisebox{-0.9pt}{1}}}+{\textcircled{\raisebox{-0.9pt}{2}}}\in \mc P_\mu$.

\item The derivative $\partial_{s} w_1$ with respect to either of $s\in \{t,x_\ell, \: \ell=1,\ldots n\}$ reads
\begin{align*}
	\partial_{s} & w_1(t,\mb x,\mb y,\mb z) \\ &= \underbrace{\sum_{j=0}^{L_1} \eps^j \partial_{s} q_{1,j}(t,\mb x,\mb y,\mb z)e^{i\Phi_k(t,\mb x,\mb y,\mb z)/\eps}}_{\textcircled{\raisebox{-0.9pt}{1}}}\\
	&\ \ \ \ \ + \underbrace{\sum_{j=0}^{L_1}i\eps^{j-1} \partial_{s} \Phi_k(t,\mb x,\mb y,\mb z) q_{1,j}(t,\mb x,\mb y,\mb z) e^{i\Phi_k(t,\mb x,\mb y,\mb z)/\eps}}_{\textcircled{\raisebox{-0.9pt}{2}}}.
\end{align*}
We have $\eps {\textcircled{\raisebox{-0.9pt}{1}}} = \sum_{j=0}^{L_1+1}\eps^j q_j(t,\mb x,\mb y,\mb z)e^{i\Phi_k(t,\mb x,\mb y,\mb z)/\eps}$,
with
\[
	q_j = \left\{ \begin{array}{ll}
				0, & \text{for } j=0,\\
				\partial_{s} q_{1,j-1}, & \text{otherwise.}
		\end{array}\right.
\]
By point \ref{lem46}, $q_j\in \mc P_\mu$, and we therefore obtain $\eps \textcircled{\raisebox{-0.9pt}{1}}\in \mc S_\mu$. 
Since $\Phi_k\in \mc P_{\infty}$, we have by point \ref{lem46} that $\partial_s \Phi_k\in \mc P_\infty$ and therefore $\eps \textcircled{\raisebox{-0.9pt}{2}}\in \mc S_\mu$ by point \ref{lem8}. By point \ref{lem2}, we finally arrive at $\eps \partial_{s}w_1 = \eps \textcircled{\raisebox{-0.9pt}{1}} + \eps \textcircled{\raisebox{-0.9pt}{2}}\in \mc S_\mu$.
	\end{enumerate}
\end{proof}
As a consequence, we obtain the following corollary.
\begin{corollary}\label{cor:mixedder}
If $w \in \mc S_\mu$, all scaled mixed derivatives $\eps^{p+|\bs\alpha|}\partial_t^p \partial_{\mb x}^{\bs\alpha} w \in \mc S_\mu.$
\end{corollary}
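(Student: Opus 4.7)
The plan is to deduce the corollary as an immediate iteration of point \ref{lem7} in Lemma \ref{lemma:PSprop}. The key observation is that since $\eps$ is a constant, it commutes with differentiation, so the scaled mixed derivative can be factored as
\[
\eps^{p+|\bs\alpha|}\partial_t^p \partial_{\mb x}^{\bs\alpha} w = (\eps\partial_t)^p (\eps\partial_{x_1})^{\alpha_1}\cdots(\eps\partial_{x_n})^{\alpha_n} w,
\]
where each operator $\eps\partial_s$ with $s\in\{t,x_1,\ldots,x_n\}$ maps $\mc S_\mu$ into itself by point \ref{lem7}.

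The proof proceeds by induction on the total order $m = p+|\bs\alpha|$. The base case $m=0$ is trivial since $w\in \mc S_\mu$ by assumption. For the inductive step, assume that every scaled mixed derivative of order $m$ of a function in $\mc S_\mu$ lies in $\mc S_\mu$. Given a scaled derivative of order $m+1$, we peel off one differentiation (say $\eps\partial_s$ for some $s\in\{t,x_1,\ldots,x_n\}$) from the outside and apply point \ref{lem7} to conclude that $\eps\partial_s$ applied to any element of $\mc S_\mu$ stays in $\mc S_\mu$; combined with the inductive hypothesis applied to the remaining $m$ derivatives, the result follows.

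There is essentially no obstacle here: this is a purely bookkeeping argument. The only point one might need to be slightly careful about is ensuring the factors of $\eps$ are distributed one per derivative so that point \ref{lem7} can be invoked at each step; but since $\eps$ is a scalar constant independent of $t,\mb x,\mb y,\mb z$, no commutation issue arises.
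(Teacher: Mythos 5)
Your proof is correct and is essentially the paper's own argument: the paper simply says to apply point \ref{lem7} of Lemma \ref{lemma:PSprop} repeatedly, and your factoring of $\eps^{p+|\bs\alpha|}\partial_t^p\partial_{\mb x}^{\bs\alpha}$ into a composition of operators $\eps\partial_s$ followed by induction on the total order is just a more explicit writing-out of that same iteration.
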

\begin{proof}
Apply point \ref{lem7} of Lemma \ref{lemma:PSprop} repeatedly.
\end{proof}

\subsection{Proof of theorem \ref{th:new}}\label{sec:th:new}
The QoI \eqref{QoIsmallGB} can be written
\begin{align}
 \widetilde{\mc Q}^{p,\bs\alpha}_{\text{GB}}(t,\mb y) &= \eps^{2(p+|\bs\alpha|)} \int_{\Real^n}  g(t,\mb x,\mb y)\partial_t^p \partial_{\mb x}^{\bs\alpha}u_k(t,\mb x,\mb y)^* \partial_t^p \partial_{\mb x}^{\bs\alpha}u_k(t,\mb x,\mb y) \psi(t,\mb x) d\mb x \nonumber \\
 & = \left(\frac{1}{2\pi\varepsilon}\right)^{n} \int_{K_0 \times K_0} I(t,\mb y,\mb z,\mb z') \, d\mb z \, d\mb z',\label{QItemp}
\end{align}
where
\begin{align}
 I(t,\mb y,\mb z,\mb z') = \eps^{2(p+|\bs\alpha|)}  \int_{\Real^n} & \partial_t^p \partial_{\mb x}^{\bs\alpha} (w_k(t,\mb x-\mb q(t,\mb y,\mb z),\mb y,\mb z))^* \partial_t^p \partial_{\mb x}^{\bs\alpha} (w_k(t,\mb x-\mb q(t,\mb y,\mb z'),\mb y,\mb z'))\nonumber \\
 &\times g(t,\mb x,\mb y) \psi(t,\mb x)\, d\mb x,\label{tt}
\end{align}
and
\be\label{wkdef}
w_k(t,\mb x,\mb y,\mb z) = A_{k}(t,\mb x,\mb y,\mb z) \varrho_\eta(\mb x) e^{i\Phi_k(t,\mb x,\mb y,\mb z)/\eps}.
\ee
The following lemma allows us to rewrite $I$ in \eqref{tt} 
in terms of functions belonging to
$\mc S_{\eta}$.
\begin{lemma}\label{lemma:partialwk}
Let $w_k$ be as in \eqref{wkdef}. Then for each $k\geq 1$, $p\geq 0$, $\bs\alpha\in \mathbb N_0^N$, there exists $s_k\in \mc S_{\eta}$ such that
	\[
		\eps^{p+|\bs\alpha|} \partial_t^p\partial_{\mb x}^{\bs\alpha} (w_k(t,\mb x-\mb q(t,\mb y,\mb z),\mb y,\mb z)) = s_k(t,\mb x-\mb q(t,\mb y,\mb z),\mb y,\mb z).
	\]
\end{lemma}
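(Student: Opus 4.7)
The plan is to decompose the argument into two essentially independent pieces: first show that, as a function of $(t,\mb x,\mb y,\mb z)$ (without the shift), $w_k$ already belongs to $\mc S_\eta$; then handle the shift by $\mb q(t,\mb y,\mb z)$ via the chain rule and express the derivatives as a differential operator evaluated at the pre-shifted argument, which we can analyze with Lemma~\ref{lemma:PSprop}.

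For the first piece, I would look at $w_k(t,\mb x,\mb y,\mb z)=A_k(t,\mb x,\mb y,\mb z)\varrho_\eta(\mb x)e^{i\Phi_k(t,\mb x,\mb y,\mb z)/\eps}$. Inserting the expansion \eqref{Adef1} for $A_k$ gives
\[
  w_k=\sum_{j=0}^{\lceil k/2\rceil-1}\eps^j\,p_j(t,\mb x,\mb y,\mb z)\,e^{i\Phi_k/\eps},\quad p_j(t,\mb x,\mb y,\mb z)=\sum_{|\bs\beta|=0}^{k-2j-1}\frac{1}{\bs\beta!}\,a_{j,\bs\beta}(t,\mb y,\mb z)\varrho_\eta(\mb x)\,\mb x^{\bs\beta}.
\]
Each coefficient $\frac{1}{\bs\beta!}a_{j,\bs\beta}(t,\mb y,\mb z)\varrho_\eta(\mb x)$ is $C^\infty$ by Proposition~\ref{prop:smooth} and smoothness of $\varrho_\eta$, and is supported in $\mc B_{2\eta}$ in $\mb x$ by the cutoff. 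Thus $p_j\in\mc P_\eta$, and so $w_k\in\mc S_\eta$ directly from the definitions.

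For the second piece, let $W(t,\mb x,\mb y,\mb z):=w_k(t,\mb x-\mb q(t,\mb y,\mb z),\mb y,\mb z)$. Because $\mb q$ is independent of $\mb x$, the chain rule gives $\partial_{x_j}W=(\partial_{x_j}w_k)(t,\mb x-\mb q,\mb y,\mb z)$ and $\partial_tW=((\partial_t-\sum_j\dot{\mb q}_j\partial_{x_j})w_k)(t,\mb x-\mb q,\mb y,\mb z)$. Introducing the operators $D_{x_j}:=\partial_{x_j}$ and $D_t:=\partial_t-\sum_j\dot{\mb q}_j(t,\mb y,\mb z)\partial_{x_j}$ acting on functions of $(t,\mb x,\mb y,\mb z)$, one checks immediately that $D_tD_{x_j}=D_{x_j}D_t$ (since $\dot{\mb q}_j$ is $\mb x$-independent), so by induction
\[
  \partial_t^p\partial_{\mb x}^{\bs\alpha}W(t,\mb x,\mb y,\mb z)=\bigl(D_t^{p}D_{\mb x}^{\bs\alpha}w_k\bigr)(t,\mb x-\mb q,\mb y,\mb z).
\]

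It therefore suffices to show $\eps^{p+|\bs\alpha|}D_t^{p}D_{\mb x}^{\bs\alpha}w_k\in\mc S_\eta$; then one can take $s_k:=\eps^{p+|\bs\alpha|}D_t^{p}D_{\mb x}^{\bs\alpha}w_k$. For this I would verify that both $\eps D_{x_j}$ and $\eps D_t$ map $\mc S_\eta$ into itself, and iterate. Closure under $\eps D_{x_j}=\eps\partial_{x_j}$ is exactly Lemma~\ref{lemma:PSprop}(\ref{lem7}). For $\eps D_t$, write $\eps D_t u=\eps\partial_tu-\sum_j\dot{\mb q}_j\cdot(\eps\partial_{x_j}u)$; by Proposition~\ref{prop:smooth} each $\dot{\mb q}_j\in C^\infty$ is $\mb x$-independent and hence trivially lies in $\mc P_\infty$, so Lemma~\ref{lemma:PSprop}(\ref{lem8}) together with (\ref{lem7}) show each summand is in $\mc S_\eta$, and then (\ref{lem2}) closes the loop.

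The only real pitfall is bookkeeping: one must check that $D_t$ and $D_{x_j}$ genuinely commute (which hinges on the $\mb x$-independence of the coefficients $\dot{\mb q}_j$) so that the iteration yields a well-defined operator $D_t^pD_{\mb x}^{\bs\alpha}$ without extra terms, and that after each application of $\eps D_t$ or $\eps D_{x_j}$ the resulting function is again of the form $\sum_j\eps^j p_j\,e^{i\Phi_k/\eps}$ with $p_j\in\mc P_\eta$; both are immediate from Lemma~\ref{lemma:PSprop}. No step is expected to be genuinely hard once the two spaces $\mc P_\eta$ and $\mc S_\eta$ are set up and the chain-rule identity with $D_t,D_{x_j}$ is in place.
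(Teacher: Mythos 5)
Your proposal is correct and follows essentially the same route as the paper: show $w_k\in\mc S_\eta$ from \eqref{Adef1} and the cutoff, commute the $\mb x$-derivatives past the shift, and handle the $t$-derivatives via the operator $\eps(\partial_t-\partial_t\mb q\cdot\nabla_{\mb x})$ (the paper's $F$, your $\eps D_t$), whose invariance on $\mc S_\eta$ follows from points \ref{lem2}, \ref{lem8} and \ref{lem7} of Lemma~\ref{lemma:PSprop} together with Proposition~\ref{prop:smooth}. The explicit verification that $D_t$ and $D_{x_j}$ commute is a minor additional formality not spelled out in the paper, but the argument is the same.
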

\begin{proof}
We note that from \eqref{Adef1},
\[
	w_k(t,\mb x,\mb y,\mb z) = \sum_{j=0}^{\lceil \frac{k}{2}\rceil-1} \eps^j \sum_{|\bs\beta|=0}^{k-2j-1}\frac{1}{\bs\beta!} a_{j,\bs\beta}(t,\mb y,\mb z) \varrho_\eta(\mb x)\, \mb x^{\bs\beta}e^{i\Phi_k(t,\mb x,\mb y,\mb z)/\eps},
\]
and since $\varrho_\eta$ is supported in ${\mc B}_{2\eta}$ then 
$w_k\in \mc S_{\eta}$. 
We first differentiate 
$$\partial_{\mb x}^{\bs\alpha} (w_k(t,\mb x-\mb q(t,\mb y,\mb z),\mb y,\mb z)) = \partial_{\mb x}^{\bs\alpha} w_k(t,\mb x,\mb y,\mb z)\big|_{\mb x= \mb x -\mb q(t,\mb y,\mb z)},
$$ and note that by Corollary \ref{cor:mixedder},	
$
r_k:= \eps^{|\bs\alpha|} \partial_{\mb x}^{\bs\alpha} w_k \in \mc S_{\eta}.
$
Furthermore, the time derivative of $r_k(t,\mb x-\mb q(t,\mb y,\mb z),\mb y,\mb z)$ reads
\[
	\partial_t \left(r_k(t,\mb x-\mb q(t,\mb y,\mb z),\mb y,\mb z)\right) = \partial_t r_k(t,\mb x,\mb y,\mb z) -\partial_t \mb q(t,\mb y,\mb z) \cdot \nabla_{\mb x} r_k(t,\mb x,\mb y,\mb z)\Big|_{\mb x = \mb x-\mb q(t,\mb y,\mb z)}.
\]
From points \ref{lem2}, \ref{lem8} and \ref{lem7} in Lemma \ref{lemma:PSprop} and Proposition \ref{prop:smooth}, we have that $F r_k\in \mc S_{\eta}$, where $F$ is the operator $F = \eps (\partial_t -\partial_t \mb q \cdot \nabla_{\mb x})$. Repeated differentiation of $r_k(t,\mb x-\mb q(t,\mb y,\mb z),\mb y,\mb z)$ subject to an appropriate scaling with $\eps$ thus yields repeated application of the $F$ operator:
\[
\eps^p \partial_t^p \left(r_k(t,\mb x-\mb q(t,\mb y,\mb z),\mb y,\mb z)\right) = F^p r_k(t,\mb x,\mb y,\mb z)\Big|_{\mb x = \mb x-\mb q(t,\mb y,\mb z)}.
\]
Since $s_k := F^p r_k \in \mc S_{\eta}$ the proof is complete.
\end{proof}
The function $s_k\in \mc S_{\eta}$ can be rewritten recalling the definition of $\mc S_\eta$ as $s_k(t,\mb x,\mb y,\mb z) = \sum_{j=0}^L \eps^j p_j(t,\mb x,\mb y,\mb z) e^{i\Phi_k(t,\mb x,\mb y,\mb z)/\eps}$, with $p_j\in \mc P_\eta$, for all $j$. Then using Lemma~\ref{lemma:partialwk}, the quantity \eqref{tt} becomes
\begin{align*}
	I(t,\mb y,& \mb z,\mb z') = \int_{\Real^n} s_k^*(t,\mb x-\mb q(t,\mb y,\mb z),\mb y,\mb z) s_k(t,\mb x-\mb q(t,\mb y,\mb z'),\mb y,\mb z')g(t,\mb x,\mb y)\psi(t,\mb x)\, d\mb x \nonumber \\
	&= \sum_{j,\ell=0}^{L} \eps^{j+\ell} \int_{\Real^n} h_{j\ell} (t,\mb x,\mb y,\mb z,\mb z') e^{i\Theta_k(t,\mb x,\mb y,\mb z,\mb z')/\eps} \, d\mb x, 
\end{align*}
where $\Theta_k$ is the $k$-th order GB phase
\be\label{Thetakdef}
	\Theta_k(t,\mb x,\mb y,\mb z,\mb z') = \Phi_k(t,\mb x-\mb q(t,\mb y,\mb z'),\mb y,\mb z')-\Phi_k^*(t,\mb x-\mb q(t,\mb y,\mb z),\mb y,\mb z),
\ee
and
\[
 h_{j\ell}(t,\mb x,\mb y,\mb z,\mb z') = p_j^*(t,\mb x-\mb q(t,\mb y,\mb z),\mb y,\mb z) \, p_\ell(t,\mb x-\mb q(t,\mb y,\mb z'),\mb y,\mb z')g(t,\mb x,\mb y)\psi(t,\mb x).
\]
Let us use the definition of $\mc P_{\eta}$ and write
$p_j(t,\mb x,\mb y,\mb z) = \sum_{|\bs\alpha|=0}^{M} a_{j,\bs\alpha}(t,\mb x,\mb y,\mb z) \, \mb x^{\bs\alpha}$, 
with \break $\text{supp}\, a_{j,\bs\alpha}(t,\cdot,\mb y,\mb z)\subset {\mc B}_{2\eta}$ for all $j,\bs\alpha,\: t\in \Real, \: \mb y\in \Gamma, \: \mb z\in\Real^n$.
We get 
\[
h_{j\ell}(t,\mb x,\mb y,\mb z,\mb z') = \sum_{|\bs\alpha|,|\bs\beta|=0}^{M}c_{j,\ell,\bs\alpha,\bs\beta}(t,\mb x,\mb y,\mb z,\mb z') (\mb x-\mb q(t,\mb y,\mb z))^{\bs\alpha}(\mb x-\mb q(t,\mb y,\mb z'))^{\bs\beta},
\]
where $c_{j,\ell,\bs\alpha,\bs\beta}(t,\mb x,\mb y,\mb z,\mb z') = a_{j,\bs\alpha}^*(t,\mb x-\mb q(t,\mb y,\mb z),\mb y,\mb z)a_{\ell,\bs{\beta}}(t,\mb x-\mb q(t,\mb y,\mb z'),\mb y,\mb z')g(t,\mb x,\mb y)\psi(t,\mb x)$ implying that 
$\text{supp}\, c_{j,\ell,\bs\alpha,\bs\beta}(t,\cdot,\mb y,\mb z,\mb z')\subset \Lambda_{\eta}(t,\mb y,\mb z,\mb z')$,
given by
$$
\Lambda_\eta(t,\mb y,\mb z,\mb z') := \{\mb x\in \Real^n: |\mb x-\mb q(t,\mb y,\mb z)|\leq 2\eta\:\text{ and }\:|\mb x-\mb q(t,\mb y,\mb z')|\leq 2\eta\}.
$$
%
To summarize, the quantity \eqref{tt} can be written as
\[
	I(t,\mb y,\mb z,\mb z') = \sum_{j,\ell=0}^{L} \eps^{j+\ell} \sum_{|\bs\alpha|,|\bs\beta|=0}^{M} I_{j,\ell,\bs\alpha,\bs\beta}(t,\mb y,\mb z,\mb z'),
\]
with 
\[
I_{j,\ell,\bs\alpha,\bs\beta}(t,\mb y,\mb z,\mb z')= \int_{\Real^n} c_{j,\ell,\bs\alpha,\bs\beta}(t,\mb x,\mb y,\mb z,\mb z')(\mb x-\mb q(t,\mb y,\mb z))^{\bs\alpha}(\mb x-\mb q(t,\mb y,\mb z'))^{\bs\beta} e^{i\Theta_k(t,\mb x,\mb y,\mb z,\mb z')/\eps} \, d\mb x,
\]
such that $c_{j,\ell,\bs\alpha,\bs\beta}\in \mathcal T_\eta$, where 
%
%
\begin{align*}
\mc T_\eta &:= \Bigl\{f\in C^\infty: \text{supp } f(t,\cdot,\mb y,\mb z,\mb z')\subset \Lambda_\eta(t,\mb y,\mb z,\mb z'), \: \forall t\in \Real, \, \mb y\in \Gamma, \, \mb z,\mb z'\in \Real^n\Bigr\}.
\end{align*}
We will now utilize the following theorem.
\begin{theorem}\label{th:oldmodified}
Assume \ref{ASSUM1}--\ref{ASSUM6} hold. Let $\eta<\infty$ be admissible
for $T>0$, $k$ and a compact $\Gamma_c\subset \Gamma$.
Define
\be\label{I0}
	I_0(t,\mb y,\mb z,\mb z') = \int_{\Real^n} f(t,\mb x,\mb y,\mb z,\mb z') (\mb x-\mb q(t,\mb y,\mb z))^{\bs\alpha}(\mb x-\mb q(t,\mb y,\mb z'))^{\bs\beta} e^{i\Theta_k(t,\mb x,\mb y,\mb z,\mb z')/\eps}d\mb x,
\ee
where $\Theta_k$ is as in \eqref{Thetakdef} and $f\in \mc T_\eta$. 
Then there exist $C_{\bs\sigma,\bs\alpha,\bs\beta}$ such that
\[
	\sup_{\substack{\mb y\in \Gamma_c\\ t\in [0,T]}}\left(\frac{1}{2\pi\eps}\right)^{n} \int_{K_0\times K_0} \left|\partial_{\mb y}^{\bs \sigma} I_0(t,\mb y,\mb z,\mb z')\right| d\mb z \, d\mb z'\leq C_{\bs\sigma,\bs\alpha,\bs\beta},
\]
for all $\bs\sigma\in \mathbb N_0^N$ and $\bs\alpha,\bs\beta\in \mathbb N_0^n$, where $C_{\bs\sigma,\bs\alpha,\bs\beta}$ is independent of $\eps$ but depends on $T$, $k$ and $\Gamma_c$.
\end{theorem}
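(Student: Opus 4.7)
The plan is to adapt the argument for Theorem~1 of \cite{malenova2017stochastic}, which treats the case $\bs\alpha=\bs\beta=\bs 0$ and $t$-independent $f$, so that it accommodates the polynomial weights $(\mb x-\mb q)^{\bs\alpha}(\mb x-\mb q')^{\bs\beta}$ (writing $\mb q'=\mb q(t,\mb y,\mb z')$) and delivers an estimate uniform in $t\in[0,T]$.

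First I would exploit positivity of $\Im\Theta_k$. By admissibility of $\eta$ and Proposition~\ref{prop:admissible}, on $\text{supp}\, f$,
\[
  \Im\Theta_k(t,\mb x,\mb y,\mb z,\mb z')\geq \delta\bigl(|\mb x-\mb q|^2+|\mb x-\mb q'|^2\bigr)\geq \tfrac{\delta}{2}|\mb x-\mb q|^2+\tfrac{\delta}{2}|\mb q-\mb q'|^2,
\]
where the last step uses $|\mb a|^2+|\mb b|^2\geq \tfrac{1}{2}|\mb a-\mb b|^2$. This yields Gaussian decay both in $\mb x-\mb q$ around each beam centre and in the ray separation $\mb q-\mb q'$. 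Under \ref{ASSUM1}--\ref{ASSUM3} and Proposition~\ref{prop:smooth}, the map $\mb z\mapsto\mb q(t,\mb y,\mb z)$ is a smooth diffeomorphism with uniformly bounded derivatives of every order on $[0,T]\times\Gamma_c\times K_0$, so $|\mb q-\mb q'|$ is comparable to $|\mb z-\mb z'|$. After the Gaussian rescalings $\mb u=(\mb x-\mb q)/\sqrt\eps$ and an analogous rescaling of $\mb z'-\mb z$, each Gaussian factor furnishes a volume of order $\eps^{n/2}$, precisely cancelling the prefactor $(1/2\pi\eps)^{n}$. The extra weights $(\mb x-\mb q)^{\bs\alpha}(\mb x-\mb q')^{\bs\beta}$ rescale to a harmless $O(\eps^{(|\bs\alpha|+|\bs\beta|)/2})$ bonus that only helps.

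Next I would differentiate under the integral and apply Leibniz to $f\cdot(\mb x-\mb q)^{\bs\alpha}(\mb x-\mb q')^{\bs\beta}\cdot e^{i\Theta_k/\eps}$, using Fa\`a di Bruno on the exponential. This produces a finite sum of terms schematically of the form
\[
  \eps^{-|\pi|}\int\tilde f\cdot(\mb x-\mb q)^{\bs\alpha'}(\mb x-\mb q')^{\bs\beta'}\prod_{B\in\pi}\partial_{\mb y}^{B}\Theta_k\cdot e^{i\Theta_k/\eps}\,d\mb x,
\]
with $\tilde f\in\mc T_\eta$, $\bs\alpha'\leq\bs\alpha$, $\bs\beta'\leq\bs\beta$, and $\pi$ a partition of a subset of $\bs\sigma$. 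The factor $\eps^{-|\pi|}$ is the principal threat. To absorb it, I would reproduce the combinatorial bookkeeping of \cite{malenova2017stochastic}: each $\partial_{\mb y}^{B}\Theta_k$ is smooth in $(\mb x,\mb y,\mb z,\mb z')$ and polynomial in $\mb x$ of degree at most $k+1$, and within the Gaussian-localised regime where $\mb x-\mb q$, $\mb x-\mb q'$ and $\mb z-\mb z'$ are each $O(\sqrt\eps)$, a careful analysis of the Taylor structure of $\Theta_k$ and its $\mb y$-derivatives near the diagonal $\{\mb z=\mb z',\,\mb x=\mb q\}$, together with explicit cancellations between different terms of the Fa\`a di Bruno expansion, shows that each $\eps^{-1}$ cost is matched by compensating powers of $\sqrt\eps$, giving an $\eps$-independent bound after integration in $\mb z,\mb z'$.

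Uniformity in $t\in[0,T]$ is then automatic, since the only $t$-dependence in $I_0$ and $\Theta_k$ enters through the GB coefficients and through $f$, whose derivatives of every order are uniformly bounded on $[0,T]\times\Gamma_c\times K_0$ by Proposition~\ref{prop:smooth} and \ref{ASSUM1}--\ref{ASSUM6}. The main obstacle is the combinatorial step: one must carefully track how the vanishing and polynomial structure of $\partial_{\mb y}^{B}\Theta_k$ near the diagonal produces exactly enough $\sqrt\eps$ factors to absorb every $\eps^{-1}$ that arises in the Fa\`a di Bruno expansion, uniformly in $|\bs\sigma|$ and over all partition shapes $\pi$. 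The polynomial weights $(\mb x-\mb q)^{\bs\alpha}(\mb x-\mb q')^{\bs\beta}$ contribute only bookkeeping indices, so no genuinely new difficulty appears beyond what is treated in \cite{malenova2017stochastic}.
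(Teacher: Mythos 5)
Your overall strategy --- Gaussian decay of $\Im\Theta_k$ combined with a combinatorial argument showing that each factor $\eps^{-1}$ produced by differentiating the exponential is compensated by the vanishing of $\partial_{\mb y}\Theta_k$ near the diagonal --- is in spirit the route the paper takes (it symmetrizes via $\bar{\mb q},\Delta\mb q$ and proves by induction in Proposition~\ref{prop:bigtheorem} that $\partial_{\mb y}^{\bs\sigma}I_0$ is a sum of terms $\eps^{\ell}(\mb z-\mb z')^{\bs\mu}\int\mb x^{\bs\nu}g\,e^{i\Psi_k/\eps}\,d\mb x$ with $|\bs\mu+\bs\nu|+2\ell\geq 0$). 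However, there is a concrete false step. You claim that under \ref{ASSUM1}--\ref{ASSUM3} the map $\mb z\mapsto\mb q(t,\mb y,\mb z)$ is a diffeomorphism, hence $|\mb q-\mb q'|$ is comparable to $|\mb z-\mb z'|$, and you use this to convert the Gaussian decay of $\Im\Theta_k$ in $|\mb q-\mb q'|$ into decay in $|\mb z-\mb z'|$. This is exactly what fails at caustics: the projection of the bicharacteristic flow onto physical space is in general not injective (a focusing initial phase makes rays issued from $K_0$ cross), and nothing in \ref{ASSUM1}--\ref{ASSUM3} excludes this --- the whole point of the Gaussian beam construction is to remain valid there. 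Consequently $\Im\Theta_k$ gives no lower bound in terms of $|\mb z-\mb z'|$, and your rescaling of $\mb z'-\mb z$ by $\sqrt{\eps}$ --- which is what cancels half of the $(2\pi\eps)^{-n}$ prefactor and absorbs the $(\mb z-\mb z')$ factors generated by the $\mb y$-derivatives of the phase --- is unjustified.

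What is actually needed, and what Lemmas 5--6 of \cite{malenova2017stochastic} (reused verbatim by the paper) supply, is the phase-space non-degeneracy $|\mb q-\mb q'|+|\mb p-\mb p'|\gtrsim|\mb z-\mb z'|$: when the positions coalesce the momenta must separate, and in that regime the missing decay in $|\mb z-\mb z'|$ comes from the oscillation of the phase in $\mb x$ rather than from its imaginary part. A second, smaller issue is that your key structural claim --- that each $\mb y$-derivative of the phase, written in symmetrized variables, vanishes to second order jointly in $(\mb x-\bar{\mb q},\mb z-\mb z')$, so that every $\eps^{-1}$ is matched --- is left as an assertion (``a careful analysis \dots shows''); this is precisely the content of Lemma~\ref{lemma:temp} and the induction of Proposition~\ref{prop:bigtheorem}, and it is the heart of the proof rather than a detail. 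As written, the proposal needs both ingredients supplied before it constitutes a proof.
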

\begin{proof}
The proof is essentially the same as the proof of Theorem 1 in \cite{malenova2017stochastic}. We include shortened version 
in the Appendix.
\end{proof}
Since $I_{j,\ell,\bs\alpha,\bs\beta}$ is of the form \eqref{I0}, we can use Theorem \ref{th:oldmodified} (replacing the constant $C_{\bs\sigma,\bs\alpha,\bs\beta}$ with $C_{\bs\sigma,j,\ell,\bs\alpha,\bs\beta}$ to illustrate its dependence on $j$ and $\ell$ as well). Then recalling \eqref{QItemp} and \ref{ASSUM5} we get
\begin{align*}
	\sup_{\substack{\mb y\in \Gamma_c\\ t\in[0,T]}}\left|\frac{\partial^{\bs\sigma} \widetilde{\mc Q}^{p,\bs\alpha}_{\text{GB}}(t,\mb y)}{\partial \mb y^{\bs\sigma}}\right|&\leq \sup_{\substack{\mb y\in \Gamma_c\\ t\in[0,T]}}\left(\frac{1}{2\pi\eps}\right)^n \int_{K_0\times K_0} \left|\frac{\partial^{\bs\sigma} I(t,\mb y,\mb z,\mb z')}{\partial \mb y^{\bs\sigma}}\right|\, d\mb z\, d\mb z' \\
&\leq \sup_{\substack{\mb y\in \Gamma_c\\t\in[0,T]}}\left(\frac{1}{2\pi\eps}\right)^n \sum_{j,\ell=0}^{L} \eps^{j+\ell} \sum_{|\bs\alpha|,|\bs\beta|=0}^{M} \int_{K_0\times K_0} \left|\frac{\partial^{\bs\sigma} I_{j,\ell,\bs\alpha,\bs\beta}(t,\mb y,\mb z,\mb z')}{\partial \mb y^{\bs\sigma}}\right|\, d\mb z\, d\mb z'\\
&\leq \tilde C \sup_{j,\ell,\bs\alpha,\bs\beta} C_{\bs\sigma,j,\ell,\bs\alpha,\bs\beta}
\leq C_{\bs\sigma},
\end{align*}
where $C_{\bs\sigma}$ depends on $\eta, T, k, \Gamma_c, L,M$, but is independent of $\eps$, for all $\bs\sigma\in \mathbb N_0^N$. This concludes the proof of Theorem \ref{th:new}.

\section{Two-mode quantity of interest}\label{sec:twofamily}

Let us consider a wave composed of both forward and backward propagating modes as defined in \eqref{umode}.
In this case, Theorem \ref{th:new} for the QoI \eqref{QoIsmallGB} is no longer necessarily true. In fact, $\widetilde{\mc Q}^{p,\bs\alpha}_{\text{GB}}$
can be highly oscillatory. We will therefore have to look at a slightly different QoI where
the averaging is also done in time, not just in space.

\subsection{What could go wrong?}\label{sec:whatcouldgowrong}
Since $\widetilde {\mathcal Q}_{\text{GB}}$ in \eqref{QoIsmallGB} is a good approximation of $\widetilde{\mathcal Q}$ in \eqref{QoIsmall}, it is oscillatory if and only if the other one is,
and we will first show a simple example where $\widetilde{\mc Q}$ in \eqref{QoIsmall0} is oscillatory.
    	\begin{figure}
            \centering
                \begin{minipage}[b]{0.32\textwidth}
                    \includegraphics[width=\textwidth]{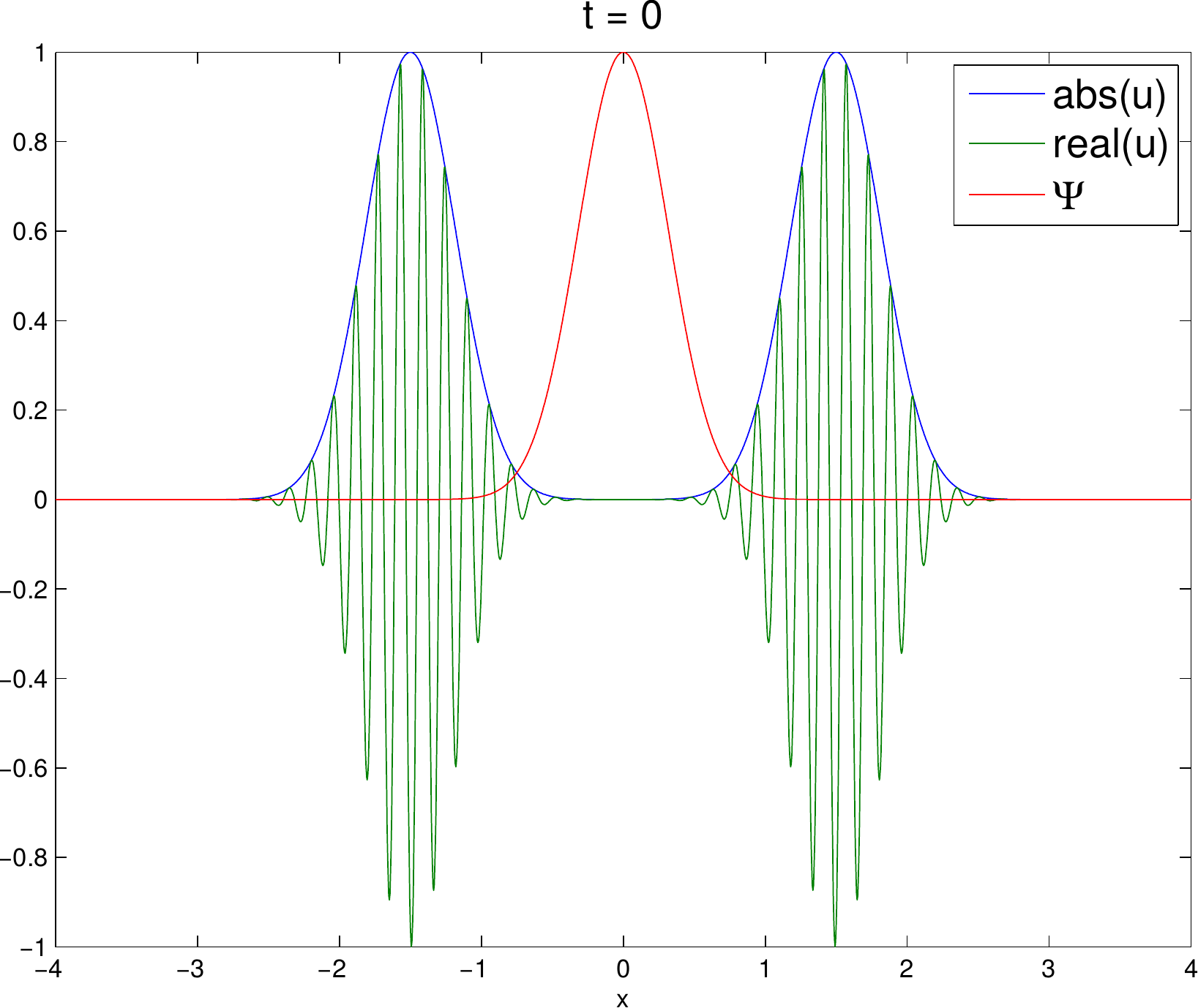}
                \end{minipage}
                \begin{minipage}[b]{0.32\textwidth}
                    \includegraphics[width=\textwidth]{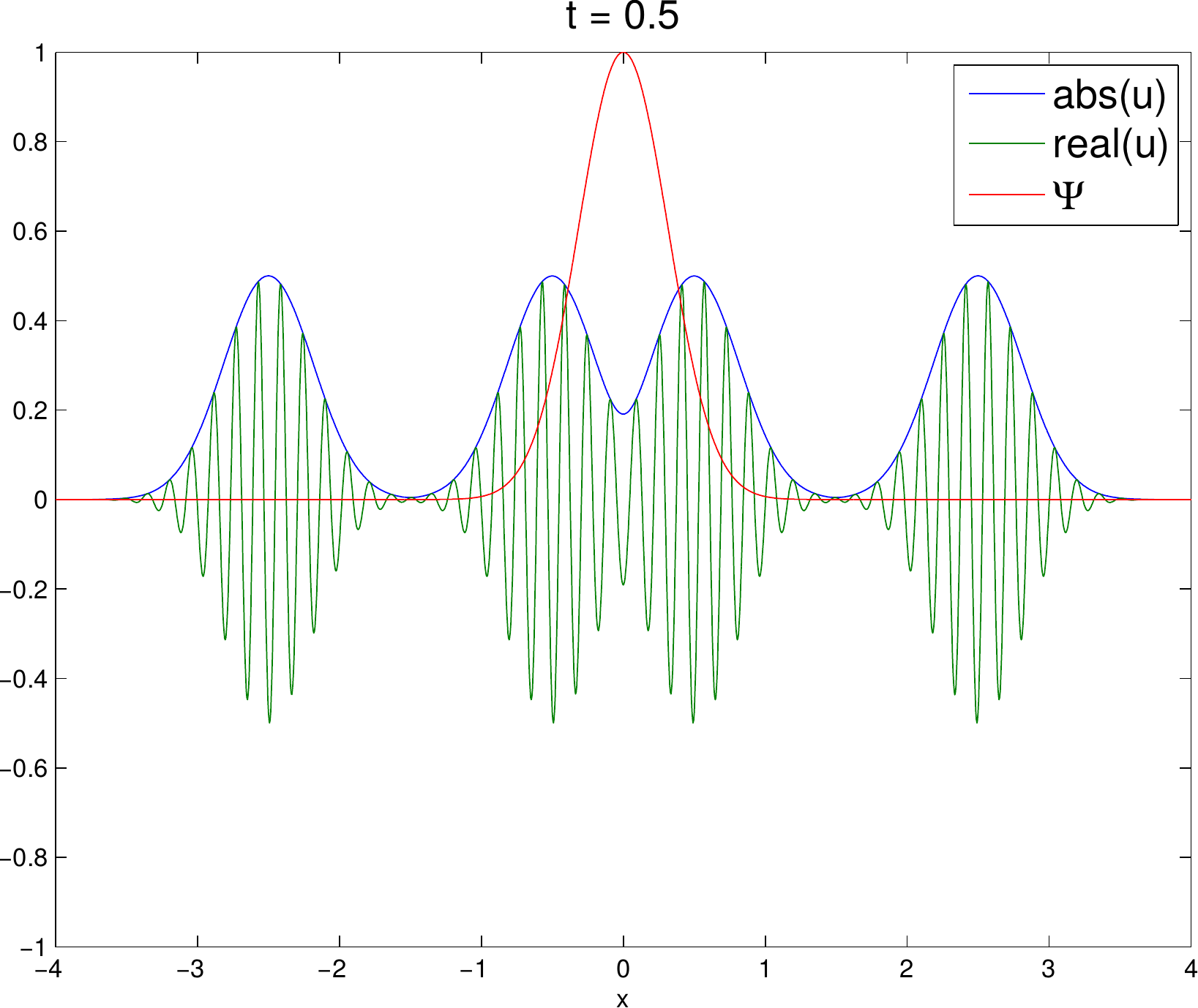}
                \end{minipage}
                \begin{minipage}[b]{0.32\textwidth}
                    \includegraphics[width=\textwidth]{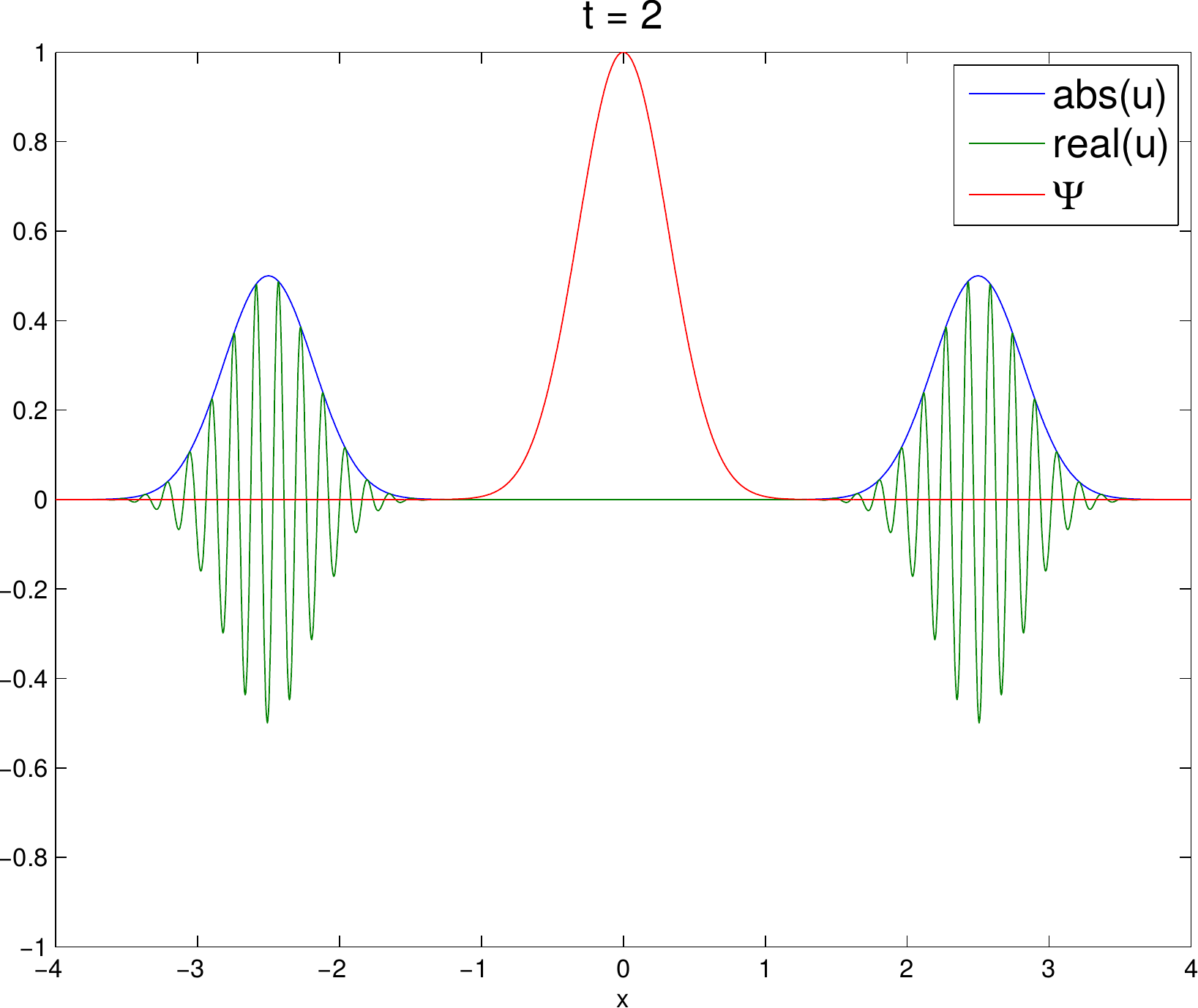}
                \end{minipage}
            \caption{d'Alembert solution with initial data \eqref{initdata} and \eqref{B0phi}.}
            \label{fig:sol}
		\end{figure}

Let us consider a 1D case with spatially constant speed $c(x,y) = c(y)$. The initial data to \eqref{waveeq},
\be\label{initdata}
	u^\eps(0,x,y) = B_0(x,y)e^{i\varphi_0(x,y)/\eps},\qquad u_t^\eps(0,x,y) = 0,
\ee
generate the d'Alembert solution
\be\label{dalemb}
	u^\eps(t,x,y) = u^+(t,x,y) + u^-(t,x,y), \qquad 
u^\pm(t,x,y) = \frac12 B_0(x\mp c(y)t,y) e^{i\varphi_0(x\mp c(y)t,y)/\eps}.
\ee
The QoI \eqref{QoIsmall0} therefore reads
\begin{align}
	\widetilde{\mc Q}(t,y) &= \int_\Real |u^+(t,x,y) + u^{-}(t,x,y)|^2 \psi(t,x) \, dx \nonumber \\
	&= \int_\Real \left(|u^+(t,x,y)|^2 + |u^-(t,x,y)|^2 + 2\Re (u^+(t,x,y)^* u^-(t,x,y))\right)\psi(t,x)\, dx\nonumber \\
	&=: \widetilde{Q}_+(t,y) + \widetilde{Q}_-(t,y) + \widetilde{Q}_0(t,y).\label{tt3}
\end{align}
The first two terms of $\widetilde{\mc Q}$ yield
\[
	\widetilde Q_{\pm}(t,y) = \int_\Real|u^\pm(t,x,y)|^2\psi(t,x) \, dx = \frac14 \int_{\Real}B_0^2(x\mp c(y)t,y)\psi(t,x)\, dx,
\]
where the integrand is smooth, compactly supported and independent of $\eps$, including all its derivatives in $y$. Therefore, the terms $\widetilde Q_{\pm}$ satisfy Theorem~\ref{th:new}.
The last term $\widetilde Q_0$ reads
\[
\widetilde Q_0(t,y)= \frac12 \int_\Real \cos\left(\frac{\varphi(t,x,y)}{\eps}\right) B_0(x+c(y)t,y)B_0(x-c(y)t,y)\psi(t,x)\, dx,
\]
where $\varphi(t,x,y):=\varphi_0(x+c(y)t,y)-\varphi_0(x-c(y)t,y)$.
This term could conceivably be problematic, depending on the choice of $B_0$ and $\varphi_0$.
Notably, the selection
\be\label{B0phi}
	B_0(x,y) = e^{-5(x+s)^2}+e^{-5(x-s)^2}, \qquad \varphi_0(x,y) = x, \qquad \psi(t,x) = e^{-5x^2},
\ee
produces two symmetric pulses centered at $\pm s$, each splitting into two waves traveling in opposite directions, see Figure \ref{fig:sol} where we set $s=1.5$ and $c=2$. The test function $\psi$ is compactly supported in $x$ for numerical purposes.
Let us also choose the speed $c(y) = y$ to be the stochastic variable. 
Then $\varphi(t,x,y) = 2yt$ and $\widetilde Q_0$ includes an oscillatory prefactor $\cos\left({2yt}/{\eps}\right)$ that does not depend on $x$ and hence cannot be damped by the test function $\psi$. Consequently, 
an $\eps^{-\sigma}$ term is produced when differentiating 
$\partial_y^\sigma\widetilde{\mc Q}(t,y)$. Thus $\widetilde{\mc Q}$ does not satisfy Theorem~\ref{th:new}.
The QoI \eqref{QoIsmall0} along with its first and second derivative in $y$ is depicted in Figure \ref{fig:QoI1}, left column, for varying $\eps = (1/40,1/80, 1/160)$. The plots display oscillations of growing amplitude with increasing $\sigma$ and decreasing $\eps$ as predicted. Here, we chose $y\in [1.5,2]$, $s=3$ and $t = 2$.

In general, for odd-order polynomial $\varphi_0$, there is a cosine prefactor independent of $x$ in $\widetilde Q_0$ which induces oscillations in $\eps$ of the QoI \eqref{QoIsmall0}.

    	\begin{figure}[h!]
            \centering
                \begin{minipage}[b]{0.3\textwidth}
                    \includegraphics[width=\textwidth]{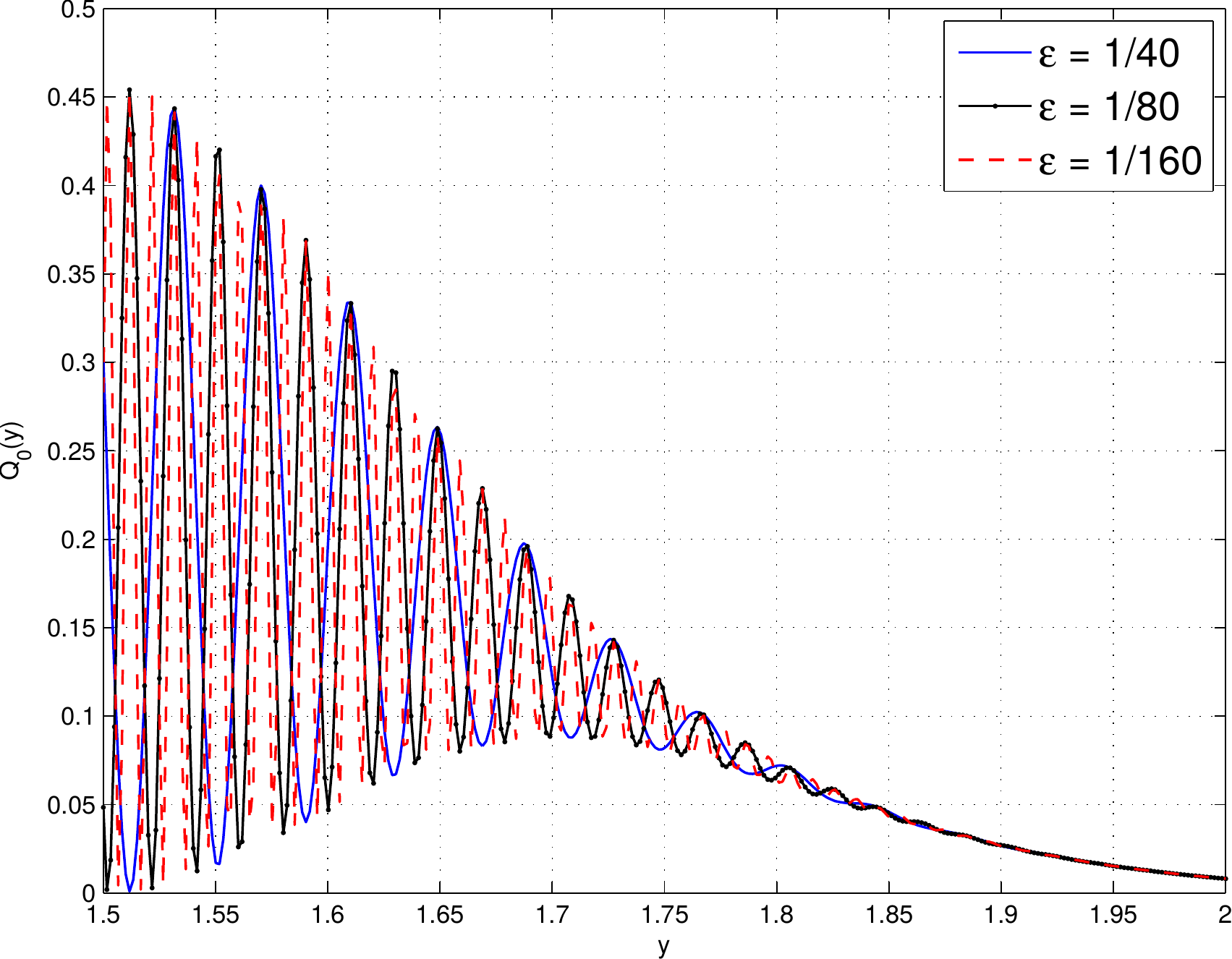}
                \end{minipage}
                \begin{minipage}[b]{0.3\textwidth}
                    \includegraphics[width=\textwidth]{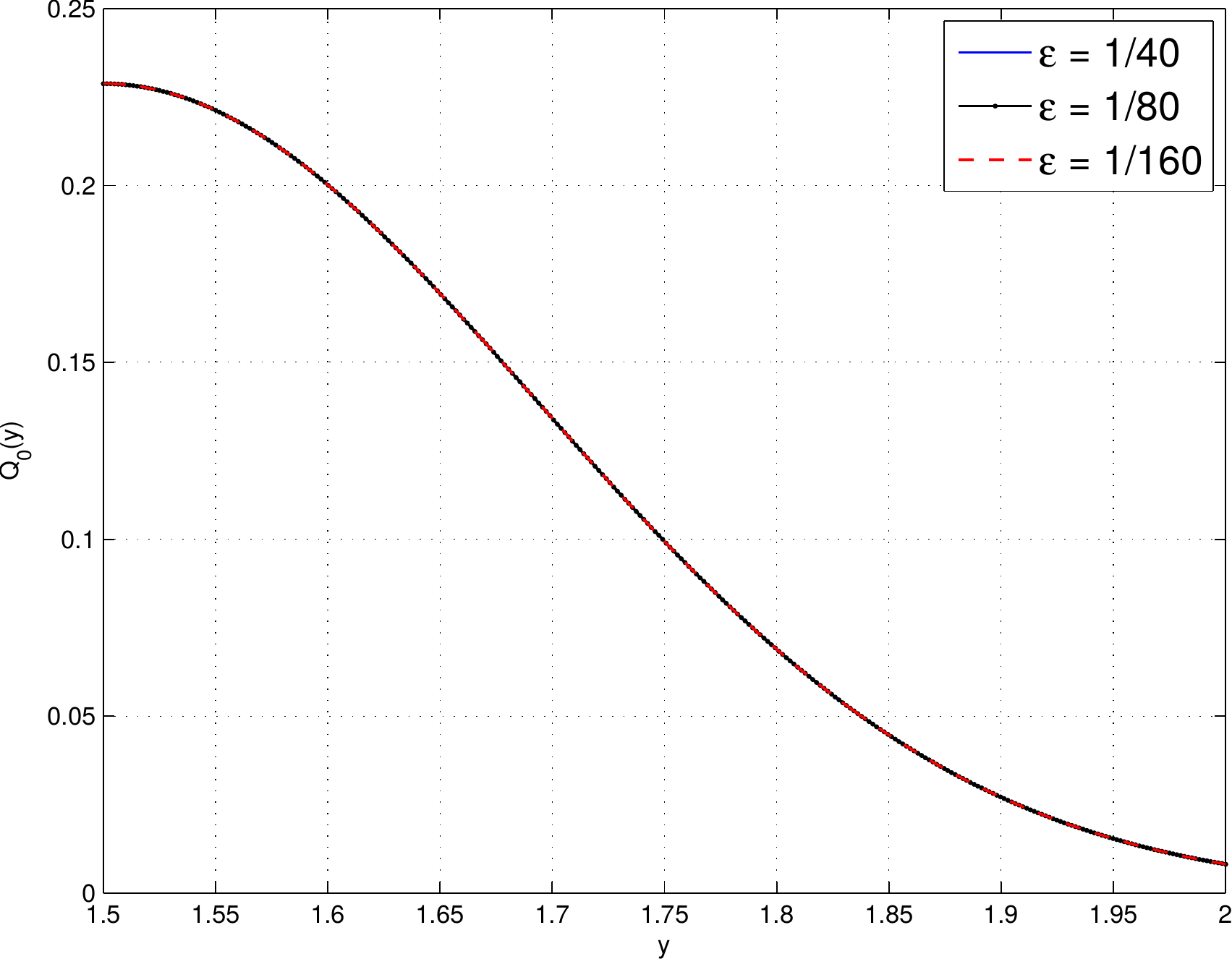}
                \end{minipage}
                \begin{minipage}[b]{0.3\textwidth}
                    \includegraphics[width=\textwidth]{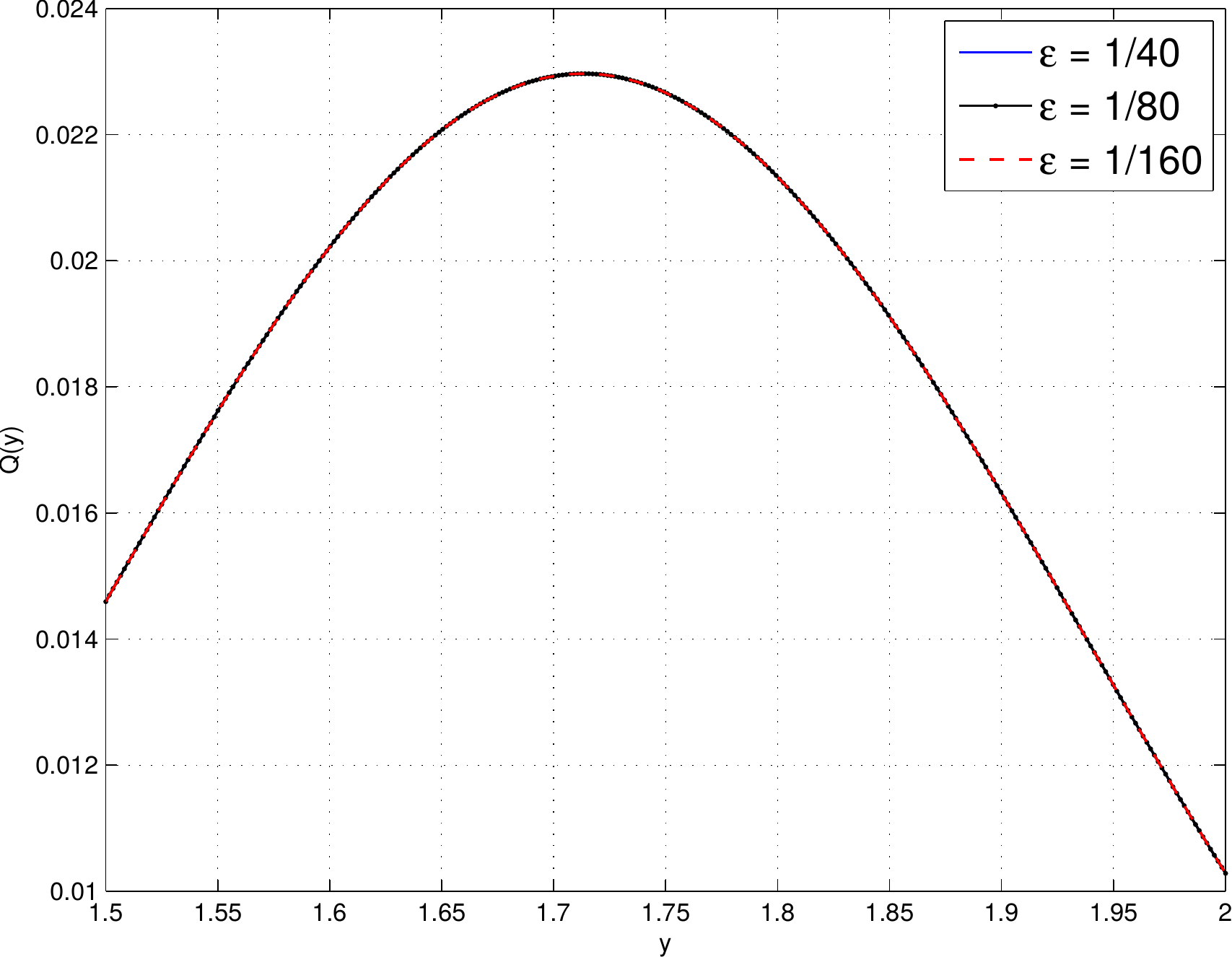}
                \end{minipage}
                \begin{minipage}[b]{0.3\textwidth}
                    \includegraphics[width=\textwidth]{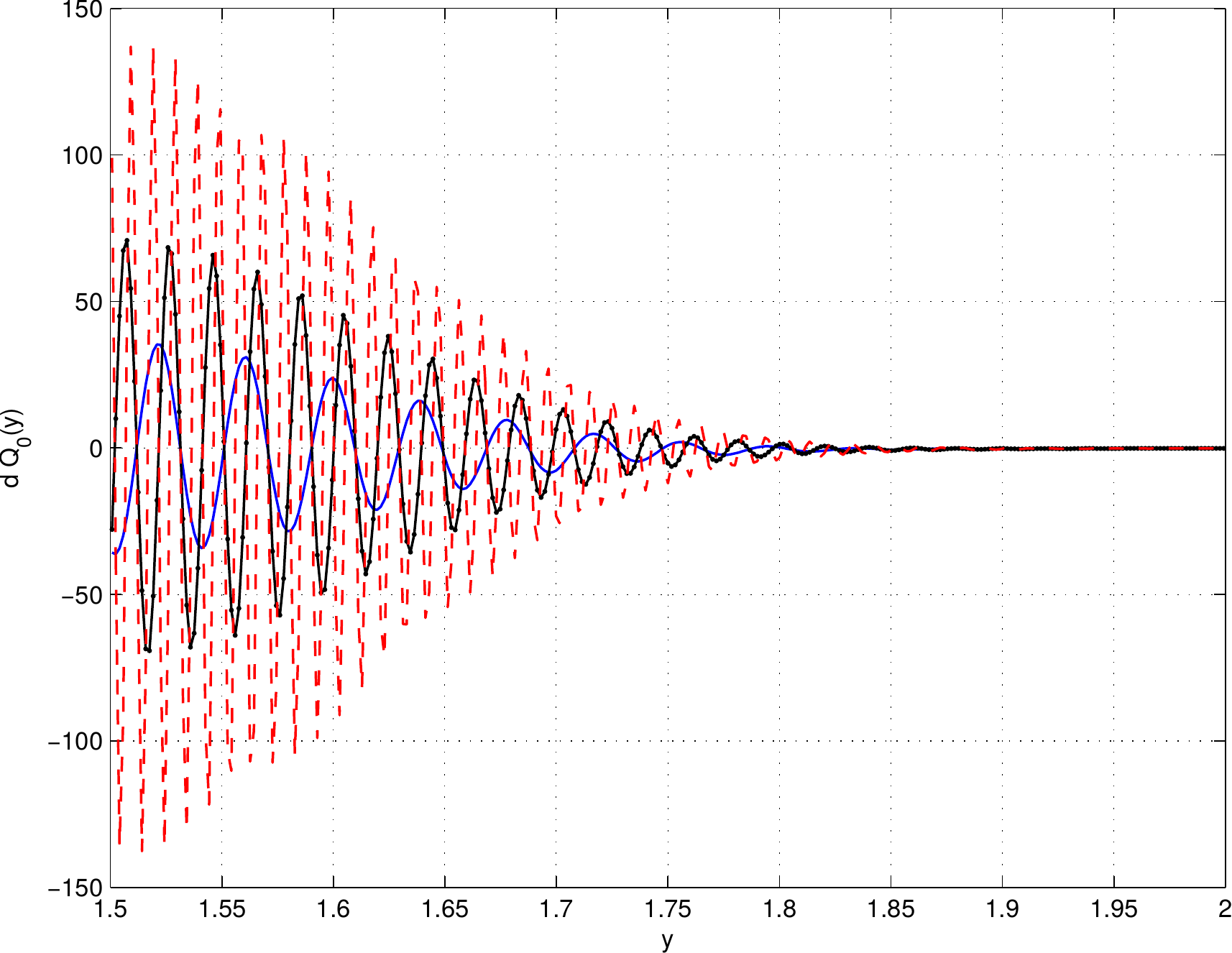}
                \end{minipage}
                \begin{minipage}[b]{0.3\textwidth}
                    \includegraphics[width=\textwidth]{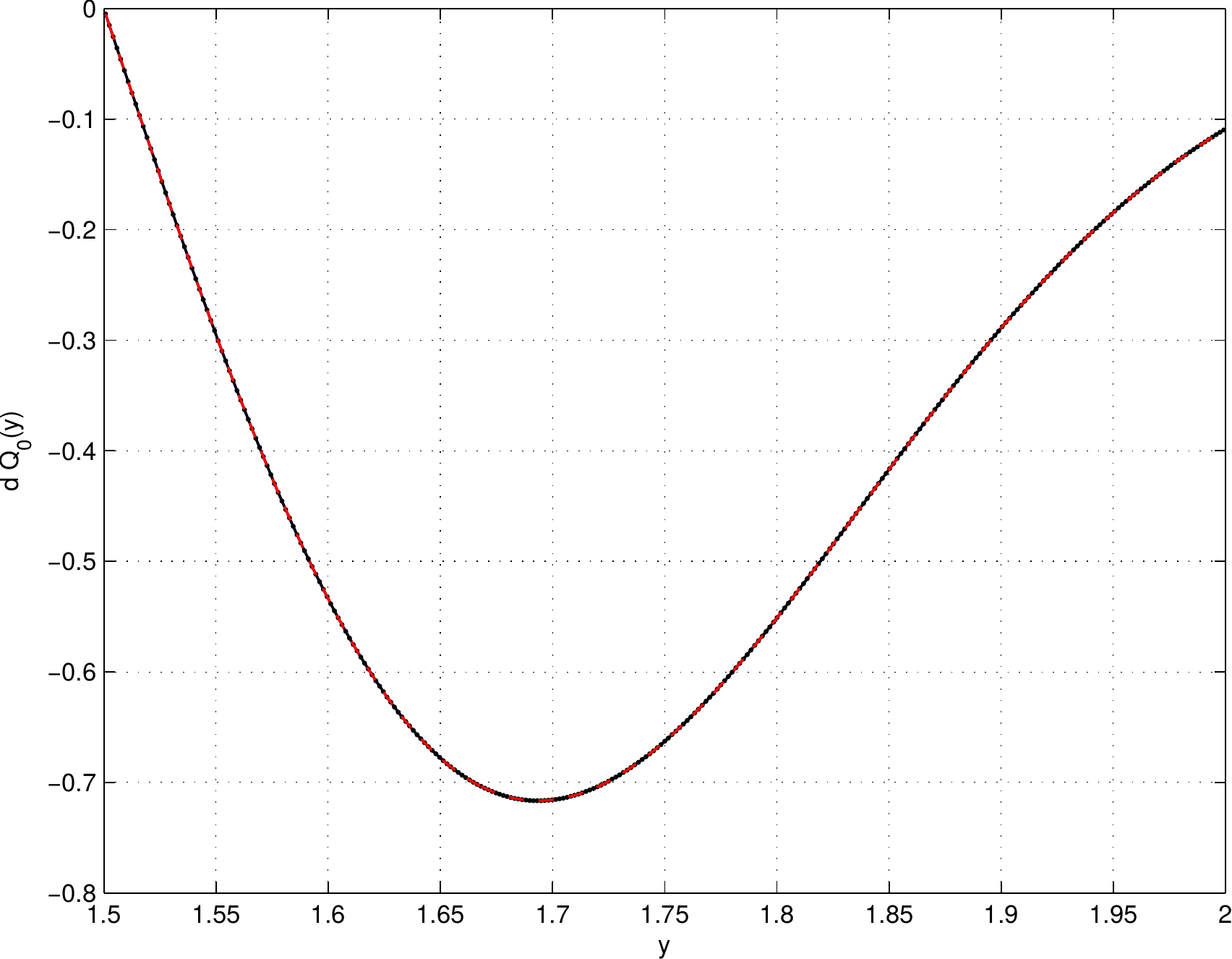}
                \end{minipage}
                \begin{minipage}[b]{0.3\textwidth}
                    \includegraphics[width=\textwidth]{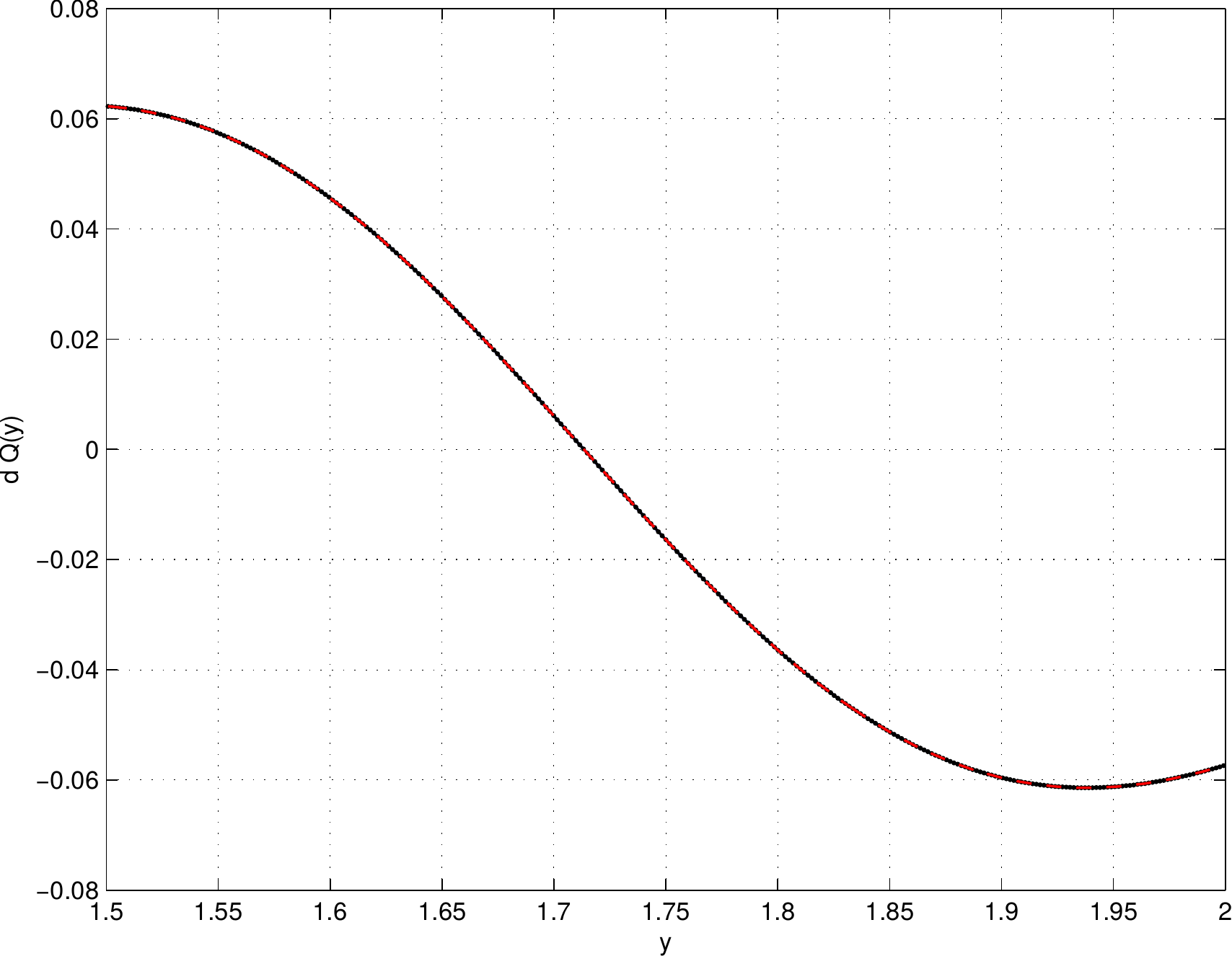}
                \end{minipage}
                \begin{minipage}[b]{0.3\textwidth}
                    \includegraphics[width=\textwidth]{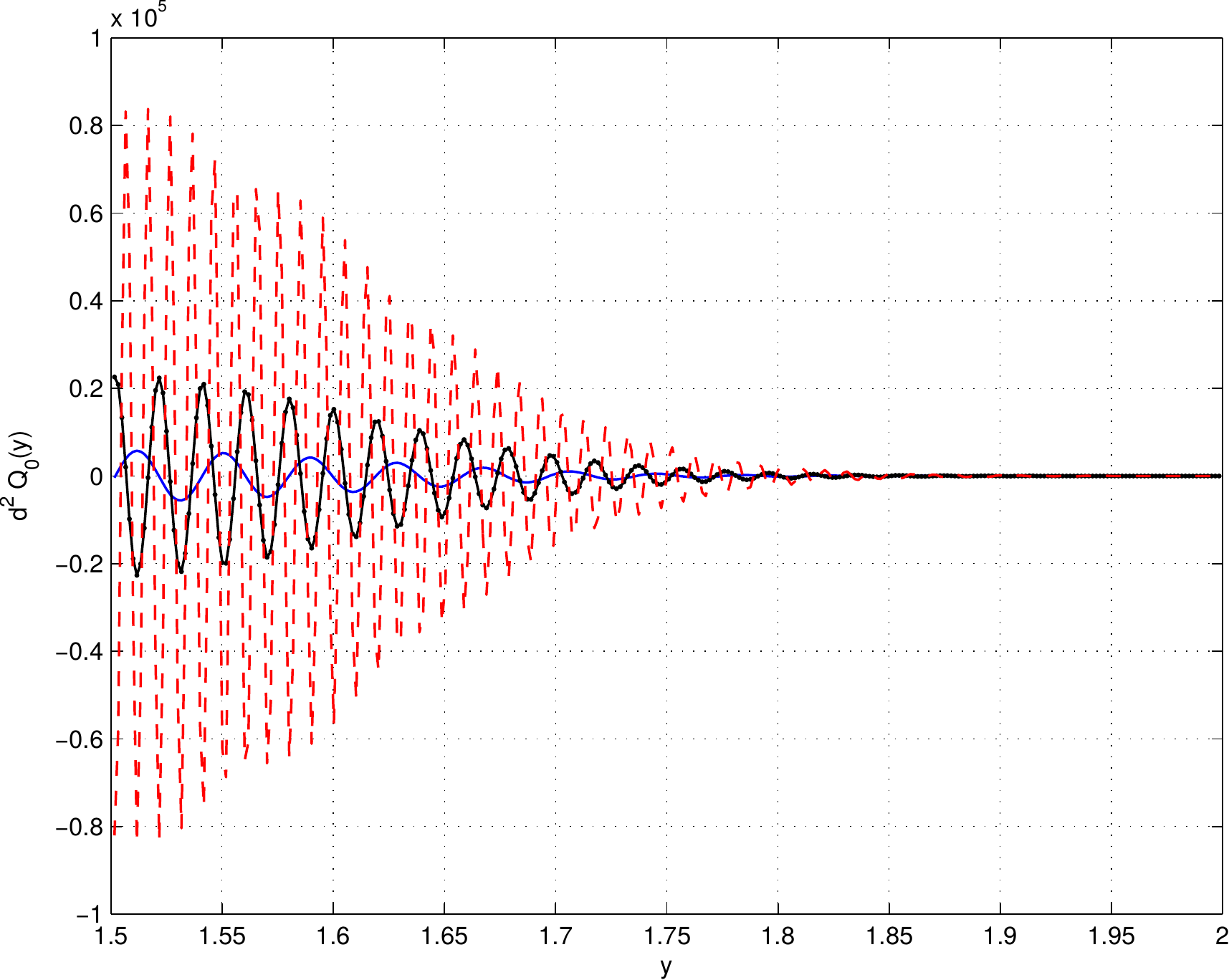}
                \end{minipage}
                \begin{minipage}[b]{0.3\textwidth}
                    \includegraphics[width=\textwidth]{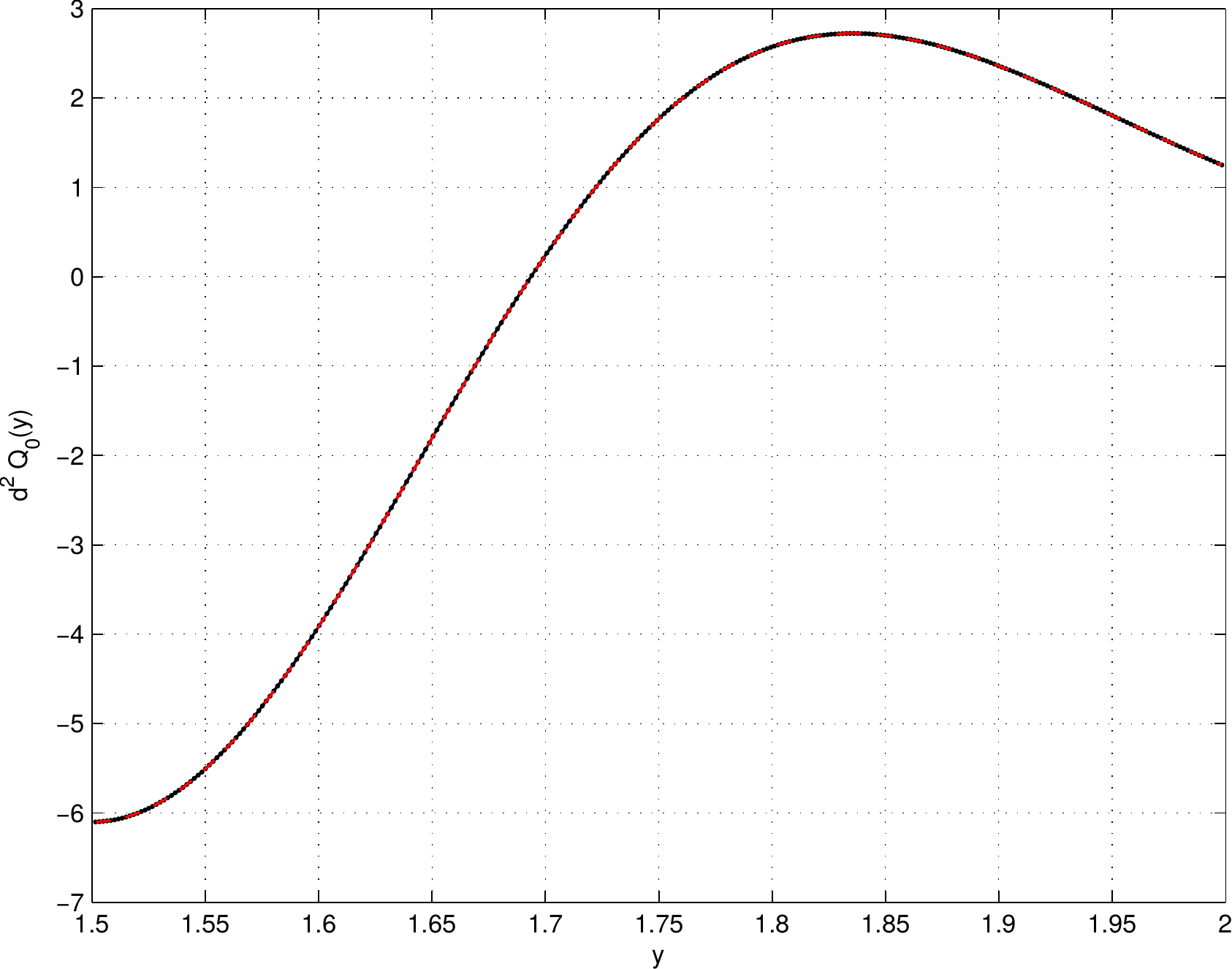}
                \end{minipage}
                \begin{minipage}[b]{0.3\textwidth}
                    \includegraphics[width=\textwidth]{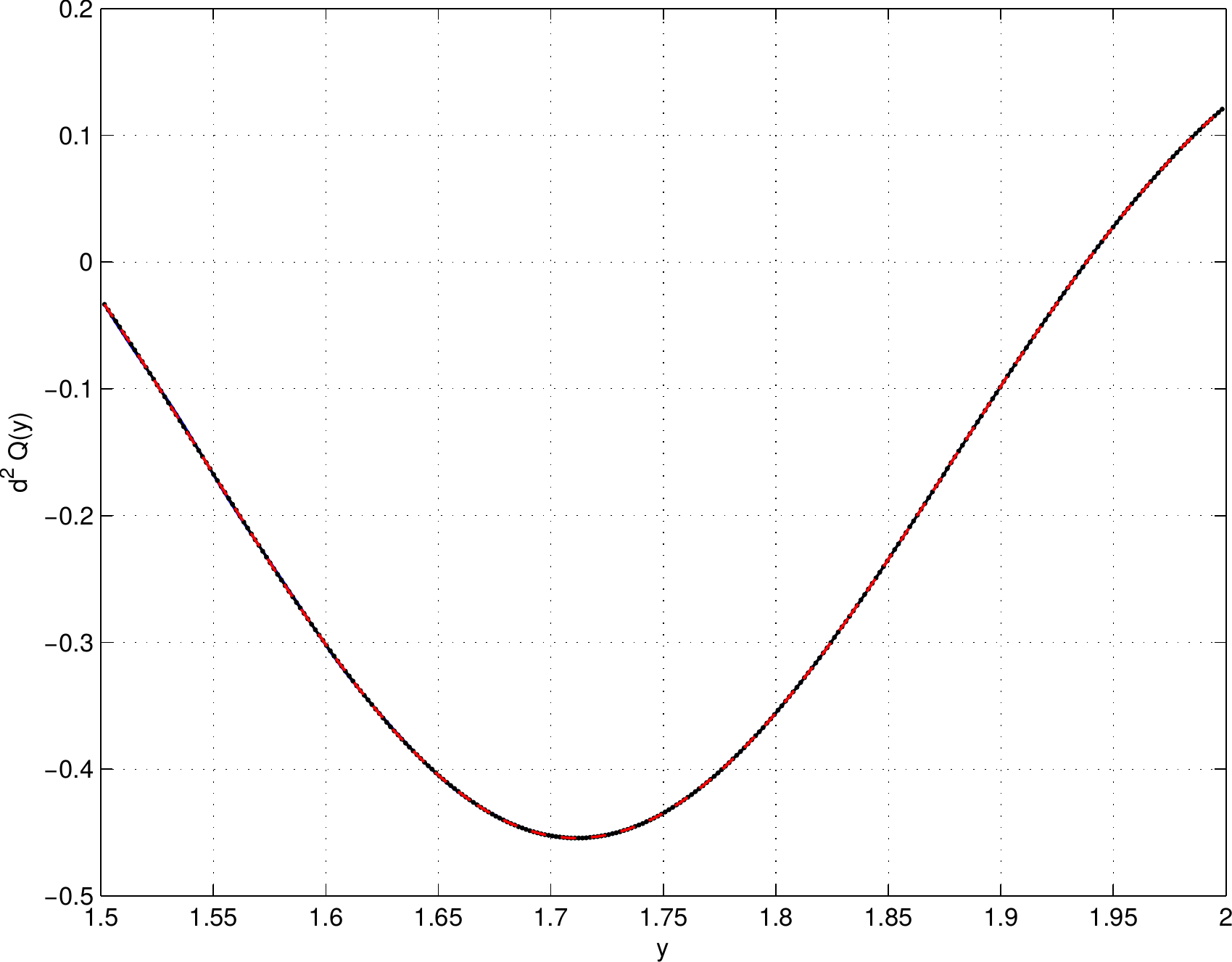}
                \end{minipage}
            \caption{Left column: QoI \eqref{QoIsmall0} with $\varphi_0(x,y) = x$, and its first and second derivative in $y$. Central column: QoI \eqref{QoIsmall0} with $\varphi_0(x,y) = x^2$. Right column: QoI \eqref{QoIbig0} with $\varphi_0(x,y) = x$.} 
            \label{fig:QoI1}
		\end{figure}

Note that when $\varphi_0$ is an
even-order polynomial in $x$, the QoI is not oscillatory for the example above. For instance, $\varphi_0(x,y) = x^2$ gives
$\varphi(t,x,y) = 4xyt$.
By the non-stationary phase lemma, for all compact $\Gamma_c\subset \Gamma$ there exist $c_s$ independent of $\eps$ such that 
\[
\sup_{\substack{y\in \Gamma_c\\ t\in [0,T]}}\left|\int_\Real \cos\left(\frac{4xyt}{\eps}\right) B_0(x+yt,y)B_0(x-yt,y)\psi(x)\, dx\right|\leq c_s\eps^s,
\] 
for all $s$ as $\eps\to 0$, and the same holds for its derivatives with respect to $y$. 
The QoI \eqref{QoIsmall0} with $\varphi_0(x,y) = x^2$ and its first and second derivatives in $y$ are plotted in Figure \ref{fig:QoI1}, central column, utilizing the same parameters as the previous example. No oscillations can be observed in the plot.

The different behavior of $\varphi_0(x,y) = x$ and $\varphi_0(x,y) = x^2$ in \eqref{B0phi} does not come as a surprise if one looks at the GB approximation \eqref{QoIsmall0GB} of \eqref{QoIsmall0}. 
Note that the left-going wave $u^-$ in \eqref{dalemb} is approximated solely by $u_k^-$ in \eqref{ukdef}. This is because all GBs $v_k^-$ in \eqref{vdef} move along the rays $(q^-,p^-)$ whose initial data are $q^-(0,y,z)=z$ and $p^-(0,y,z)=1$ by \eqref{GBini}. From \eqref{IC} this implies that $p^-(t,y,z) = 1$ and $q^-(t,y,z) = -yt+z$. Hence, 
as $y>0$ 
all $v_k^-$ move to the left.
Similarly, $u^+$ is approximated merely by $u_k^+$. Therefore, the waves moving towards the origin (where the test function is supported) are from two different GB families. As stated above, a two-mode solution can thus give highly oscillatory QoIs.

In contrast, for $\varphi_0(x,y) = x^2$ we obtain $p^\pm(0,y,z) = p^\pm(t,y,z) = 2z$ and hence $q^{\pm}(t,y,z) = \pm y \frac{z}{|z|}t + z$. Therefore, both $q^+$ and $q^-$ can move in either direction depending on the starting point $z$. 
For our example, this implies that the two waves moving towards the origin belong to the same GB mode, $u_k^-$, and the two waves moving away belong to $u_k^+$. Since the test function $\psi$ is compactly supported around the origin, only $u^-_k$ will substantially contribute to the QoI \eqref{QoIsmall0GB}. Finally, by Theorem \ref{th:old}, the QoI \eqref{QoIsmall0GB} consisting of one GB mode solution is non-oscillatory.

\begin{remark}
Generally, a phase $\varphi_0=\varphi_0(x)$ whose derivative changes sign on $\Real$ allows for two waves approximated by the same mode moving in two different directions. In particular, this is true for even-order polynomials. Technically, $\varphi_0$ is not allowed to attain local extrema due to \ref{ASSUM3}. In practice however, it is enough to make sure that the support of $B_0$ and $B_1$ does not include the stationary point.
\end{remark}		

\subsection{New quantity of interest}

To avoid the oscillatory behavior
of $\widetilde{\mc Q}$ in (\ref{tt3}) we introduce the new QoI \eqref{QoIbig0},
in which $|u^\eps|^2\psi$
is integrated not only in $\mb x$ but also in time $t$,
with $\psi\in C^\infty_c(\Real\times\Real^n)$. 
Let us first apply this QoI to the 1D oscillatory example from Section \ref{sec:whatcouldgowrong} with $\varphi_0(x,y) = x$,
\begin{align*}
	\mc Q(y) &= \int_\Real \int_\Real |u^+(t,x,y) + u^{-}(t,x,y)|^2 \psi(t,x) \, dx \, dt, \\
	&= \int_\Real\int_\Real \left(|u^+(t,x,y)|^2 + |u^-(t,x,y)|^2 + 2\Re (u^+(t,x,y)^* u^-(t,x,y))\right)\psi(t,x)\, dx\, dt\\
	&=: Q_+(y) + Q_-(y) + Q_0(y).
\end{align*}
Again, the first two terms yield
\[
	Q_\pm(y) = \int_\Real \int_\Real|u^\pm(t,x,y)|^2\psi(t,x) \, dx\, dt = \frac14 \int_\Real \int_{\Real}B_0^2(x\mp yt,y)\psi(t,x)\, dx \, dt,
\]
where the integrand is smooth, compactly supported in both $t$ and $x$ and independent of $\eps$, including all its derivatives in $y$. The last term reads
\[
	Q_0(y) =  \frac12 \int_\Real \int_{\Real} \cos\left(\frac{2yt}{\eps}\right) B_0(x+ yt,y)B_0(x-yt,y)\psi(t,x)\, dx\, dt,
\]
and since the phase of $\cos\left(\frac{2yt}{\eps}\right)$ 
has no stationary point in $t$, we can utilize the non-stationary phase lemma in $t$. As $\psi$ is compactly supported in both $t$ and $x$, we obtain the desired regularity: for all compact $\Gamma_c\subset \Gamma$, $\sup_{y\in \Gamma_c}|Q_0(y)|\leq c_s \eps^s$ for all $s$ as $\eps\to 0$, where $c_s$ is independent of $\eps$ and similarly for differentiation in $y$. The same then holds for ${\mc Q}(y)$.

To confirm this numerically, we use the initial data from the previous section and set
\[
\psi(t,x) = e^{-5x^2-300(t-t_s)^2},
\]
where $t_s = 1.75$. The rightmost column of Figure \ref{fig:QoI1} shows the QoI \eqref{QoIbig0} and its first and second derivatives with respect to $y$ for $\eps = (1/40,1/80,1/160)$. 
Compared to the first column
the oscillations are eliminated.

\subsection{Stochastic regularity of ${\mc Q}^{p,\bs\alpha}$}
We now consider the general 
QoI $\mc Q^{p,\bs\alpha}$ in \eqref{QoIbig}
with $\psi$ as in \ref{ASSUM6} 
and define its GB approximated version as 
\be\label{QoI3GB}
	{\mc Q}^{p,\bs\alpha}_{\text{GB}}(\mb y) = \eps^{2(p+|\bs\alpha|)}\int_{\Real}\int_{\Real^n} g(t,\mb x,\mb y)|\partial_t^p\partial_{\mb x}^{\bs\alpha}u_k(t,\mb x,\mb y)|^2 \psi(t,\mb x) d\mb x \, dt.
\ee
We start off by defining the admissible cutoff parameter for the case of two-mode solutions.
\begin{proposition}\label{prop:admissible2}
Assume \ref{ASSUM1}--\ref{ASSUM3} hold. 
Then for all $T>0$, beam order $k$ and compact $\Gamma_c\subset \Gamma$, there is a GB cutoff width $\eta>0$ and constant $\delta >0$ such that for all $\mb x\in {\mc B}_{2\eta}$,
\be\label{eee}
\Im \Phi_k^\pm(t,\mb x,\mb y,\mb z)\geq \delta |\mb x|^2, \quad \forall t\in [0,T], \, \mb y\in \Gamma_c, \,\mb z\in K_0.
\ee
For the first order GB, $k=1$, we can take $\eta=\infty$
and (\ref{eee}) is valid for all ${\mb x}\in \Real^n$.
\end{proposition}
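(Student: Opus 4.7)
The plan is to reduce the two-mode statement to the one-mode statement, Proposition~\ref{prop:admissible}, which we apply once for the $+$ sign and once for the $-$ sign and then take the worst of the two parameters. Indeed, Proposition~\ref{prop:admissible} is stated without reference to the sign because, as noted in Section~\ref{sec:onefamily}, the one-mode construction is identical up to the sign convention; both $\Phi_k^+$ and $\Phi_k^-$ are built from the ODE system in \eqref{IC} with the common initial datum $M^\pm(0,\mb y,\mb z)=\nabla^2\varphi_0(\mb z,\mb y)+iI_{n\times n}$, whose imaginary part is the identity. In particular Ralston's global existence result for the Riccati ODE and the preservation of positive definiteness of $\Im M^\pm(t,\mb y,\mb z)$ go through identically for both signs.

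First, I would apply Proposition~\ref{prop:admissible} to the $+$ mode, obtaining some $\eta_+>0$ and $\delta_+>0$ such that $\Im\Phi_k^+(t,\mb x,\mb y,\mb z)\ge \delta_+|\mb x|^2$ for all $\mb x\in{\mc B}_{2\eta_+}$, $t\in[0,T]$, $\mb y\in\Gamma_c$, $\mb z\in K_0$. Then I would repeat the argument with the $-$ mode, yielding analogous constants $\eta_->0$ and $\delta_->0$. Setting $\eta:=\min(\eta_+,\eta_-)$ and $\delta:=\min(\delta_+,\delta_-)$, the desired inequality~\eqref{eee} then holds for both signs simultaneously on ${\mc B}_{2\eta}$ with constant $\delta$.

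For the first-order case $k=1$, recall that $\Phi_1^\pm(t,\mb x,\mb y,\mb z)=\phi_0^\pm+\mb x^T\mb p^\pm+\tfrac12\mb x^TM^\pm\mb x$, and since $\phi_0^\pm$ and $\mb p^\pm$ are real-valued we have $\Im\Phi_1^\pm(t,\mb x,\mb y,\mb z)=\tfrac12\mb x^T\Im M^\pm(t,\mb y,\mb z)\mb x$. Proposition~\ref{prop:admissible} guarantees for each mode individually that $\Im M^\pm$ is uniformly positive definite on the compact set $[0,T]\times\Gamma_c\times K_0$, allowing $\eta_\pm=\infty$. Taking the minimum of the two positive lower bounds on the eigenvalues of $\Im M^\pm$ yields a $\delta>0$ valid for both modes with $\eta=\infty$.

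The only subtle point is confirming that Proposition~\ref{prop:admissible} really does apply symmetrically to the two sign conventions, i.e.\ that the result quoted from \cite{malenova2017stochastic,malenova2016uncertainty} is not written for one specific sign in a way that breaks under $+\mapsto -$; this follows because the two ODE systems in \eqref{IC} are related by a time reversal $t\mapsto -t$ together with $\mb p\mapsto-\mb p$, which preserves the initial data for $M^\pm$ and hence preserves the positivity structure of $\Im M^\pm$. No new analytic estimates are required beyond invoking the one-mode proposition twice and taking minima.
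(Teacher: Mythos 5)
Your proof is correct and takes essentially the same route as the paper: apply Proposition~\ref{prop:admissible} once per mode and set $\eta=\min\{\eta^+,\eta^-\}$, $\delta=\min\{\delta^+,\delta^-\}$. The extra discussion of the sign symmetry and the $k=1$ case is harmless but not needed beyond what the one-mode proposition already supplies.
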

\begin{proof}
By Proposition \ref{prop:admissible}, for every $\Gamma_c$ there exist $\delta^+>0$ and $\eta^+>0$ such that for all $\mb x\in {\mc B}_{2\eta^+}$ we have $\Im \Phi_k^+(t,\mb x,\mb y,\mb z)\geq \delta^+ |\mb x|^2$, and analogously for $\Im \Phi_k^-$ with $\delta^-$ and $\eta^-$. Then choosing $\delta= \min\{\delta^+,\delta^-\}$ and $\eta = \min\{\eta^+,\eta^-\}$ yields the relation \eqref{eee} for all $\mb x\in {\mc B}_{2\eta}$. 
\end{proof}
\begin{definition}\label{def:admissible2}
The cutoff width $\eta$ used for the GB approximation is called admissible for a given $T$, $k$ and $\Gamma_c$ if it is small enough in the sense of Proposition \ref{prop:admissible2}.
\end{definition}

\begin{remark}
 As in Section \ref{sec:onemodeprel}, we assume that $\eta<\infty$ without loss of
generality. 
We note that also for the two-mode solutions, the Gaussian beam superposition \eqref{umode} with no cutoff is identical to the one with a large enough cutoff, because of the compact support of the test function $\psi(t,\mb x)$.
\end{remark}

We will now prove the main theorem, which shows 
that the QoI \eqref{QoI3GB} is indeed non-oscillatory.
\begin{theorem}\label{th:main}
Assume \ref{ASSUM1}--\ref{ASSUM6} hold. Moreover, let $\eta<\infty$ be admissible 
for $T>0$, $k$ and a compact $\Gamma_c\subset \Gamma$. 
Then 
for all $p\in{\mathbb N}$ and ${\bs\alpha}\in{\mathbb N_0^N}$,
there exist $C_{\bs\sigma}$ such that
\[\sup_{\mb y\in \Gamma_c}\left|\frac{\partial^{\bs\sigma} {\mc Q}^{p,\bs\alpha}_{\text{GB}}(\mb y)}{\partial \mb y^{\bs\sigma}}\right|\leq C_{\bs\sigma}, \quad \forall \bs\sigma\in\mathbb N_0^N,\]
where $C_{\bs\sigma}$ is independent of $\eps$ but depends on $T$, $k$ and $\Gamma_c$. 
\end{theorem}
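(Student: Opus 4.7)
The plan is to decompose the two-mode intensity and reduce the claim to Theorem~\ref{th:new} plus a non-stationary phase argument in $t$. Writing $u_k = u_k^+ + u_k^-$ gives
\[
|\partial_t^p\partial_{\mb x}^{\bs\alpha}u_k|^2 = |\partial_t^p\partial_{\mb x}^{\bs\alpha}u_k^+|^2 + |\partial_t^p\partial_{\mb x}^{\bs\alpha}u_k^-|^2 + 2\Re\!\left(\overline{\partial_t^p\partial_{\mb x}^{\bs\alpha}u_k^+}\,\partial_t^p\partial_{\mb x}^{\bs\alpha}u_k^-\right),
\]
so that $\mc Q^{p,\bs\alpha}_{\text{GB}}(\mb y)=\mc Q_+(\mb y)+\mc Q_-(\mb y)+\mc Q_{\text{cross}}(\mb y)$, where $\mc Q_\pm$ is the one-mode time-integrated version of \eqref{QoIsmallGB} built from $u_k^\pm$. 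For the two diagonal pieces, Theorem~\ref{th:new} applied separately to each mode yields a uniform bound on the integrand $\widetilde{\mc Q}^{p,\bs\alpha}_{\text{GB}}(t,\mb y)$ on $[0,T]\times\Gamma_c$, together with all its $\mb y$-derivatives. Since $\psi$ has compact $t$-support contained in $[0,T]$, integrating in $t$ then gives the required estimate for these two terms.

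The remaining task is the cross term. I would follow the reductions of Section~\ref{sec:th:new}: insert the Gaussian-beam superpositions \eqref{ukdef}, apply Lemma~\ref{lemma:partialwk} separately to each mode, and expand the resulting $\mc S_\eta$-functions. This writes $\mc Q_{\text{cross}}(\mb y)$ as a finite sum of terms of the form
\[
\left(\frac{1}{2\pi\eps}\right)^n\!\!\int_{K_0\times K_0}\!\!\int_{\Real}\!\!\int_{\Real^n} f(t,\mb x,\mb y,\mb z,\mb z')\,e^{i\Theta_k^\mp(t,\mb x,\mb y,\mb z,\mb z')/\eps}\,d\mb x\,dt\,d\mb z\,d\mb z',
\]
where $f\in C^\infty$ inherits compact $t$-support inside $[0,T]$ from $\psi$, is supported in $\mb x$ inside $\Lambda_\eta^\mp(t,\mb y,\mb z,\mb z'):=\{\mb x:|\mb x-\mb q^-|\leq 2\eta\text{ and }|\mb x-\mb q^+|\leq 2\eta\}$, and
\[
\Theta_k^\mp(t,\mb x,\mb y,\mb z,\mb z') := \Phi_k^-(t,\mb x-\mb q^-,\mb y,\mb z') - \overline{\Phi_k^+(t,\mb x-\mb q^+,\mb y,\mb z)}
\]
is the two-mode analogue of the phase \eqref{Thetakdef}. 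By Proposition~\ref{prop:admissible2}, $\Im\Theta_k^\mp\geq\delta(|\mb x-\mb q^-|^2+|\mb x-\mb q^+|^2)$ on $\Lambda_\eta^\mp$.

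The decisive new ingredient, and in my view the main obstacle, is a uniform non-stationary lower bound on $\partial_t\Theta_k^\mp$. A short chain-rule computation together with the ODEs \eqref{IC} (using $\dot\phi_0^\pm=0$, $\dot{\mb q}^\pm\cdot\mb p^\pm=\pm c|\mb p^\pm|$, and Taylor-expanding $\Phi_k^\pm$ about $\mb x-\mb q^\pm=0$) gives
\[
\partial_t\Re\Theta_k^\mp = c(\mb q^-,\mb y)|\mb p^-|+c(\mb q^+,\mb y)|\mb p^+|+O(\eta),
\]
uniformly on $\Lambda_\eta^\mp$ as $(t,\mb y,\mb z,\mb z')$ ranges over the compact set $[0,T]\times\Gamma_c\times K_0\times K_0$. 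The leading term encodes the essential fact that $u_k^+$ and $u_k^-$ propagate with opposite group velocities. By \ref{ASSUM1}, \ref{ASSUM3}, Proposition~\ref{prop:smooth}, and compactness, the quantities $c$ and $|\mb p^\pm|$ are bounded below by positive constants, so the leading term exceeds some $2c_0>0$. After possibly shrinking the admissible $\eta$ (which remains admissible under shrinking, since Proposition~\ref{prop:admissible2} only requires $\eta$ to be small enough) the error $O(\eta)$ is absorbed and one obtains $\partial_t\Re\Theta_k^\mp\geq c_0>0$, hence $|\partial_t\Theta_k^\mp|\geq c_0$, uniformly on the relevant set.

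With non-stationarity in $t$ secured, the remainder is a standard integration-by-parts argument. Introduce the operator $L:=(\eps/i\partial_t\Theta_k^\mp)\partial_t$, which fixes $e^{i\Theta_k^\mp/\eps}$, and iterate IBP in $t$; no boundary terms arise, because $\psi$ and hence $f$ vanishes together with all its $t$-derivatives at $t=0$ and $t=T$. Each step produces a factor of $\eps$ times smooth, uniformly bounded factors built from $\partial_t^r\Theta_k^\mp$, $\partial_t^r f$, and $1/\partial_t\Theta_k^\mp$. Commuting $\partial_{\mb y}^{\bs\sigma}$ with the integrations and applying it to $e^{i\Theta_k^\mp/\eps}$ produces at most $\eps^{-|\bs\sigma|}$ negative powers of $\eps$, multiplied by smooth factors bounded uniformly on the compact set (using Proposition~\ref{prop:smooth}). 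Choosing the number of $t$-IBP's to be $M\geq n+|\bs\sigma|+1$ then yields $|\partial_{\mb y}^{\bs\sigma}\mc Q_{\text{cross}}(\mb y)|\leq C_{\bs\sigma}\eps^{M-n-|\bs\sigma|}$ uniformly in $\mb y\in\Gamma_c$, which together with the diagonal estimates completes the proof.
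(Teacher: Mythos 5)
Your overall architecture coincides with the paper's: the same decomposition $\mc Q^{p,\bs\alpha}_{\text{GB}}=Q_++Q_-+Q_{\text{cross}}$, the same reduction of the diagonal terms to Theorem~\ref{th:new} followed by integration over $[0,T]$, the same reduction of the cross term via Lemma~\ref{lemma:partialwk} to oscillatory integrals with phase $\vartheta_k=\Phi_k^--(\Phi_k^+)^*$, and the same key mechanism: $\partial_t\vartheta_k$ has leading part $\pm\bigl(c(\mb q^-,\mb y)|\mb p^-|+c(\mb q^+,\mb y)|\mb p^+|\bigr)$, bounded away from zero by conservation of the Hamiltonian together with \ref{ASSUM1} and \ref{ASSUM3}, so that non-stationary phase in $t$ applies and $K\geq n+|\bs\sigma|$ integrations by parts beat the $\eps^{-n-|\bs\sigma|}$ prefactors.

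There is, however, one genuine gap: your treatment of the remainder in $\partial_t\Theta_k^\mp$. The bound $R_k=O(\eta)$ holds on all of your $\Lambda_\eta^\mp$ (the paper's $\Sigma_\eta$), but $\eta$ is a datum of the theorem --- it is whatever admissible cutoff was used to build $u_k$ in \eqref{ukdef} --- and nothing forces $C_k\eta<2\gamma$. Your proposed remedy, ``shrink the admissible $\eta$,'' is not available as stated: $\eta$ enters the definition of $u_k$ through $\varrho_\eta$, so shrinking it changes the quantity $\mc Q^{p,\bs\alpha}_{\text{GB}}$ being estimated. At best you would prove the theorem for all sufficiently small admissible $\eta$ rather than for every admissible $\eta$; and transferring the estimate back to the original cutoff would require controlling the $\mb y$-derivatives of the difference of the two superpositions, which is a problem of exactly the type you are trying to solve. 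The paper's resolution keeps $\eta$ fixed and introduces a second, smaller scale $\mu\leq\eta$ with a partition of unity $1=g_1+g_2$, where $g_1$ is supported in $\Sigma_\mu$. On $\operatorname{supp}g_1$, Proposition~\ref{prop:nostatpt2} gives $|\partial_t\vartheta_k|\geq\nu>0$ and Lemma~\ref{lemma:nonstat} yields the $\eps^K$ gain; on $\operatorname{supp}g_2$ the point $\mb x$ is at distance at least $\mu$ from one of the two central rays, so the admissibility bound of Proposition~\ref{prop:admissible2} gives $\Im\vartheta_k\geq\delta\mu^2$ and the integrand is $O(e^{-\delta\mu^2/\eps})$, which absorbs every negative power of $\eps$. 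This exponentially damped complementary region is the ingredient missing from your argument; inserting the partition of unity (or an equivalent device) is what is needed to close the proof.
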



In the proof 
we will use the following notation.
Let $\Wcal_\mu$ and $\Sigma_\mu$, for $\mu<\infty$, denote the spaces
\begin{align*}
\Wcal_\mu &= \Bigl\{f\in C^\infty: \text{supp} \, f(t,\cdot,\mb y,\mb z,\mb z')\subset \Sigma_\mu(t,\mb y,\mb z,\mb z'), \; \forall t\in \Real, \, \mb y\in \Gamma, \, \mb z,\mb z' \in \Real^n\Bigr\},\\
&\ \ \ \text{where}\:
\Sigma_\mu(t,\mb y,\mb z,\mb z') := \{ \mb x\in \Real^n: |\mb x-\mb q^+(t,\mb y,\mb z)|\leq 2\mu \quad \text{and } \quad |\mb x-\mb q^-(t,\mb y,\mb z')|\leq 2\mu\}.
\end{align*}
%
%
%
Note that the space $\Sigma_\mu$ is similar to $\Lambda_\mu$ introduced in Section~\ref{sec:th:new}. Instead of containing $\mb x$ that are close enough to two beams from the same mode, it contains $\mb x$ that lie at a distance at most $2\mu$ from two beams from different modes. We also note that there exist two spaces $\mc S^\pm_\mu$ as defined in Section~\ref{sec:onemodeprel} since we have two modes of $\Phi_k^\pm$ and that Lemma~\ref{lemma:PSprop} holds for both.

For the remainder of the proof we fix
the final time $T>0$, the beam order $k$ and
the compact set $\Gamma_c\subset \Gamma$. Moreover, we select $\eta<\infty$ admissible in the sense of Definition \ref{def:admissible2}. 
An important part of the proof relies on the non-stationary
phase lemma:
\begin{lemma}[Non-stationary phase lemma]\label{lemma:nonstat}
Suppose  $\Theta\in C^\infty(\Real)$ and $f\in C_c^\infty(\Real)$ with $\text{supp} f\subset [0,T].$ If $\partial_t\Theta(t) \ne 0$ for all $t\in[0,T]$ then the following estimate holds true for all $K\in \mathbb N_0$,
\[
	\left|\int_\Real f(t) e^{i\Theta(t)/\eps}dt \right|
\leq C_K(1+\|\Theta\|_{C^{K+1}([0,T])})^K \eps^K \sum_{m\leq K} \int_\Real \frac{|\partial_t^m f(t)|}{|\partial_t \Theta(t)|^{2K-m}} e^{-\Im \Theta(t)/\eps} \, dt,
\]
where $C_K$ depends on $K$ but is independent of $\eps, f, \Theta$, $T$, and
	\[
		\|\Theta\|_{C^{K+1}([0,T])} = \sum_{k=0}^{K+1}\sup_{t\in [0,T]}\left|\Theta^{(k)}(t)\right|.
	\]

\end{lemma}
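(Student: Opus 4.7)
The plan is to establish the lemma by repeated integration by parts, using the standard non-stationary phase operator adapted to a complex phase. Set
\[
L^* g := \partial_t\!\left(\frac{\eps}{i\,\partial_t\Theta}\,g\right),
\]
which is well defined on $[0,T]$ by the hypothesis $\partial_t\Theta\neq 0$ there. Because $\partial_t e^{i\Theta/\eps} = (i\,\partial_t\Theta/\eps)\,e^{i\Theta/\eps}$, one has the key identity $e^{i\Theta/\eps} = \bigl(\eps/(i\,\partial_t\Theta)\bigr)\partial_t e^{i\Theta/\eps}$. First I would use this identity, integrate by parts once, and note that all boundary terms vanish because $f\in C_c^\infty(\Real)$ with $\text{supp}\,f\subset[0,T]$ forces $f$ and all its derivatives to vanish at the endpoints. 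Iterating $K$ times gives
\[
\int_\Real f(t)\,e^{i\Theta(t)/\eps}\,dt = (-1)^K\!\int_\Real (L^*)^K f(t)\,e^{i\Theta(t)/\eps}\,dt.
\]

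The second step is a structural induction on $K$ establishing
\[
(L^*)^K f = \eps^K \sum_{m=0}^K \frac{P_{K,m}(\partial_t\Theta,\ldots,\partial_t^{K+1}\Theta)}{(\partial_t\Theta)^{2K-m}}\,\partial_t^m f,
\]
where each $P_{K,m}$ is a polynomial of total degree at most $K$ in derivatives of $\Theta$ of order between $1$ and $K+1$. The inductive step applies $L^*$ to $(L^*)^{K-1}f$: differentiating $\partial_t^m f$ produces the $\partial_t^{m+1}f$ term (reindex $m\mapsto m+1$ so the denominator exponent automatically matches $2K-(m+1)$), differentiating $(\partial_t\Theta)^{-(2K-1-m)}$ yields an extra factor $\partial_t^2\Theta$ and lifts the denominator exponent to $2K-m$, and differentiating $P_{K-1,m}$ raises the maximum derivative order inside $P$ by at most one. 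Multiplying numerator and denominator by $\partial_t\Theta$ where needed aligns every term with the canonical denominator $(\partial_t\Theta)^{2K-m}$, and one checks that the resulting $P_{K,m}$ satisfies the same structural constraints.

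Third, each factor in a monomial of $P_{K,m}$ is a derivative $\partial_t^j\Theta$ with $1\leq j\leq K+1$, hence bounded on $[0,T]$ by $\|\Theta\|_{C^{K+1}([0,T])}$. Since the total degree is at most $K$ and the number of monomials is bounded by a constant depending only on $K$, one obtains the pointwise bound
\[
|P_{K,m}(t)|\leq C_K\bigl(1+\|\Theta\|_{C^{K+1}([0,T])}\bigr)^K,\qquad t\in[0,T].
\]
Combining with the identity $|e^{i\Theta(t)/\eps}|=e^{-\Im\Theta(t)/\eps}$, applying the triangle inequality to the iterated integral, and restricting the domain of integration back to $\text{supp}\,f\subset[0,T]$ yields the claimed estimate. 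The case $K=0$ is immediate from $|e^{i\Theta/\eps}|=e^{-\Im\Theta/\eps}$.

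The main obstacle is the bookkeeping in the inductive step: keeping the denominator at exactly $(\partial_t\Theta)^{2K-m}$ and controlling the total polynomial degree by $K$ simultaneously. The delicate point is that additional terms from $\partial_t$ hitting the old denominator must be regrouped by multiplying by $\partial_t\Theta/\partial_t\Theta$ to match the canonical exponent; consistent application at every step is what ultimately produces the clean $(1+\|\Theta\|_{C^{K+1}})^K$ bound on $|P_{K,m}|$.
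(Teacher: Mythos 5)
Your proposal is correct and is precisely the classical integration-by-parts argument (iterating the adjoint operator $L^*$ and tracking the structure of the resulting rational expressions in $\partial_t\Theta,\dots,\partial_t^{K+1}\Theta$) that the paper invokes by reference to H\"ormander, noting only that careful bookkeeping of constants yields the stated $(1+\|\Theta\|_{C^{K+1}([0,T])})^K$ dependence. You have simply written out the details the paper omits, and the bookkeeping in your inductive step (denominator exponent $2K-m$, polynomial degree at most $K$, derivative order at most $K+1$) checks out.
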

The proof of this lemma is classical. See  e.g. \cite{hormander83}.
Upon keeping careful track of the constants in this proof we
get 
the precise dependence on $\|\Theta\|$ in the right hand
side of the estimate.

\begin{lemma}\label{lemma:partialI}
Define
\[
	I(\mb y,\mb u) = f(\mb y,\mb u) e^{i\Theta(\mb y,\mb u)/\eps},
\]
for $f,\Theta\in C^\infty(\Gamma\times \Real^d)$, where $\text{supp} \, f(\mb y,\cdot)\subset D\subseteq\Real^d$, $\forall \mb y\in \Gamma$. Then there exist functions $ f_{j\bs\sigma}\in C^\infty(\Gamma\times\Real^d)$ with $\text{supp} \, f_{j\bs\sigma}(\mb y,\cdot)\subset D$, $\forall \mb y\in \Gamma$ such that,
\be\label{partialI}
\frac{\partial^{\bs\sigma} I(\mb y,\mb u)}{\partial \mb y^{\bs\sigma}} =  \sum_{j=0}^{|\bs\sigma|} \eps^{-j} f_{j\bs\sigma}(\mb y,\mb u) e^{i\Theta(\mb y,\mb u)/\eps}.
\ee
\end{lemma}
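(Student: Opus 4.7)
The proof is a straightforward induction on $|\bs\sigma|$, tracking how each application of $\partial_{y_\ell}$ produces an extra negative power of $\eps$ when it falls on the exponential.

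For the base case $|\bs\sigma|=0$, I would set $f_{00}=f$, which has the required smoothness and support. For the inductive step, suppose the formula \eqref{partialI} holds at order $|\bs\sigma|=r$ and pick any $\ell\in\{1,\dots,N\}$. Applying $\partial_{y_\ell}$ to one term $\eps^{-j}f_{j\bs\sigma}e^{i\Theta/\eps}$ and using the product rule gives
\begin{equation*}
\partial_{y_\ell}\!\left(\eps^{-j}f_{j\bs\sigma}e^{i\Theta/\eps}\right)
=\eps^{-j}(\partial_{y_\ell}f_{j\bs\sigma})\,e^{i\Theta/\eps}
+\eps^{-(j+1)}\bigl(i\,\partial_{y_\ell}\Theta\bigr)f_{j\bs\sigma}\,e^{i\Theta/\eps}.
\end{equation*}
Summing over $j=0,\dots,r$ and regrouping by the power of $\eps$ yields exactly the form \eqref{partialI} with $|\bs\sigma|+1$ replaced by $r+1$, where the new coefficients are $\tilde f_{0}=\partial_{y_\ell}f_{0\bs\sigma}$, $\tilde f_{r+1}=i(\partial_{y_\ell}\Theta)f_{r\bs\sigma}$, and $\tilde f_{j}=\partial_{y_\ell}f_{j\bs\sigma}+i(\partial_{y_\ell}\Theta)f_{(j-1)\bs\sigma}$ for $1\le j\le r$.

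It remains to verify the two properties of the new coefficients. Smoothness is immediate since each $\tilde f_j$ is built from sums and products of $C^\infty$ functions (here using that $\Theta\in C^\infty(\Gamma\times\Real^d)$). For the support in $\mb u$, note that if $f_{j\bs\sigma}(\mb y,\mb u)=0$ for all $\mb u\notin D$ and all $\mb y\in\Gamma$, then differentiating this identity in $y_\ell$ gives $\partial_{y_\ell}f_{j\bs\sigma}(\mb y,\mb u)=0$ on the same set, and multiplication by the (possibly $\mb u$-supported-everywhere) factor $\partial_{y_\ell}\Theta$ preserves the vanishing. Hence $\operatorname{supp}\tilde f_j(\mb y,\cdot)\subset D$ for all $\mb y\in\Gamma$.

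There is no real obstacle here beyond bookkeeping the index shift $j\mapsto j+1$ coming from differentiation of the exponential; the induction closes with the expected upper summation limit $r+1=|\bs\sigma|+1$ after the step. The general multi-index case follows by iterating the single-variable step $|\bs\sigma|$ times, which produces the claimed expansion.
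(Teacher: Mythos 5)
Your proof is correct and follows essentially the same route as the paper: induction on $|\bs\sigma|$ via single increments $\bs\sigma\mapsto\bs\sigma+\mb e_\ell$, with the identical recursion for the coefficients ($\tilde f_0=\partial_{y_\ell}f_{0\bs\sigma}$, $\tilde f_j=\partial_{y_\ell}f_{j\bs\sigma}+i(\partial_{y_\ell}\Theta)f_{(j-1)\bs\sigma}$, $\tilde f_{r+1}=i(\partial_{y_\ell}\Theta)f_{r\bs\sigma}$). The paper leaves the smoothness and support verification to a one-line remark, which you spell out explicitly; otherwise the arguments coincide.
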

\begin{proof}
We will carry out the proof by induction. For $\bs\sigma = \mb 0$, we choose $f_{0\mb 0} = f$ and the lemma holds. Let us assume \eqref{partialI} is true for a fixed $\bs\sigma$. Then for $\tilde{\bs\sigma} = \bs\sigma + \mb e_k$ where $\mb e_k$ is the $k$-th unit vector we have
\begin{align*}
	\frac{\partial^{\tilde{\bs\sigma}} I(\mb y,\mb u)}{\partial \mb y^{\tilde{\bs\sigma}}} &=  \frac{\partial }{\partial y_k}\sum_{j=0}^{|\bs\sigma|} \eps^{-j} f_{j\bs\sigma}(\mb y,\mb u) e^{i\Theta(\mb y,\mb u)/\eps} \\
&= \sum_{j=0}^{|\bs\sigma|} \eps^{-j} \left(\frac{\partial f_{j\bs\sigma}(\mb y,\mb u)}{\partial y_k}+ f_{j\bs\sigma}(\mb y,\mb u)\frac{i}{\eps}\frac{\partial \Theta(\mb y,\mb u)}{\partial y_k}\right)e^{i\Theta(\mb y,\mb u)/\eps}.
\end{align*}
Hence we can take
\[
	f_{j\tilde{\bs\sigma}} = \left\{\begin{array}{ll}
					\frac{\partial f_{0\bs\sigma}}{\partial y_k}, & j = 0,\\
					\frac{\partial f_{j\bs\sigma}}{\partial y_k} + if_{j-1\bs\sigma}\frac{\partial \Theta}{\partial y_k}, & 1\leq j\leq |\tilde{\bs \sigma}|-1,\\
	if_{j-1\bs\sigma}\frac{\partial \Theta}{\partial y_k}, & j = |\tilde{\bs\sigma}|.
			\end{array}\right.
\]
Clearly, we have $f_{j\tilde{\bs\sigma}}\in C^\infty(\Gamma\times\Real^d)$ with $\text{supp} \, f_{j\tilde{\bs\sigma}}(\mb y,\cdot)\subset D$ for all $\mb y\in \Gamma$. The proof is complete.
\end{proof}

Recalling the definition of $u_k$ in \eqref{umode}, $\mc Q^{p,\bs\alpha}_{\text{GB}}$ in \eqref{QoI3GB} becomes
\begin{align}
	\mc Q^{p,\bs\alpha}_{\text{GB}}(\mb y) &= \eps^{2(p+|\bs\alpha|)}\int_\Real \int_{\Real^n} g(t,\mb x,\mb y)\left|\partial_t^p \partial_{\mb x}^{\bs\alpha}u^+_k(t,\mb x,\mb y)+ \partial_t^p \partial_{\mb x}^{\bs\alpha} u^-_k(t,\mb x,\mb y)\right|^2 \, \psi(t,\mb x) d\mb x \, dt\nonumber \\
&= \eps^{2(p+|\bs\alpha|)} \int_\Real \int_{\Real^n} g(t,\mb x,\mb y)[|\partial_t^p \partial_{\mb x}^{\bs\alpha} u^+_k(t,\mb x,\mb y)|^2+|\partial_t^p \partial_{\mb x}^{\bs\alpha} u^-_k(t,\mb x,\mb y)|^2 \nonumber \\
& \qquad \qquad \qquad \qquad + 2\Re(\partial_t^p \partial_{\mb x}^{\bs\alpha} u^+_k(t,\mb x,\mb y)^* \partial_t^p \partial_{\mb x}^{\bs\alpha} u^-_k(t,\mb x,\mb y))]\, \psi(t,\mb x) d\mb x \, dt \nonumber\\
&=: Q_1(\mb y) + Q_2(\mb y) + 2\Re(Q_3(\mb y)),\label{Qnew}
\end{align}
where $\psi\in C_c^\infty(\Real\times\Real^n)$ is as in \ref{ASSUM6} and $g\in C^\infty(\Real\times\Real^n\times \Gamma)$.
The first two terms of \eqref{Qnew}, $Q_1$ and $Q_2$, possess the required stochastic regularity as a consequence of Theorem \ref{th:new}. 
Indeed, as $\psi$ is only supported for $t\in[0,T]$
we can write
$$
Q_1(\mb y) = \int_0^T \widetilde{Q}_1(t,{\mb y}) dt,
$$
where the reduced QoI $\tilde{Q}_1$
satisfies the assumptions of Theorem \ref{th:new}. 
(Note that when $\eta$ is admissible it
admissible for both $\Phi_k^+$ and $\Phi_k^-$ individually.)
Then
\be\label{supQ1}
	\sup_{\mb y\in\Gamma_c} \left|\partial_{\mb y}^{\bs\sigma} Q_1(\mb y)\right| \leq \int_0^T \sup_{\substack{\mb y\in\Gamma_c\\t\in[0,T]}}\left|\partial_{\mb y}^{\bs\sigma} \widetilde{Q}_1^{p,\bs\alpha}(t,\mb y)\right| \, dt \leq T C_{\bs\sigma},
\ee
and analogously for $Q_2$.

We will now prove that $Q_3$ satisfies the same regularity condition owing to the absence of stationary points of the phase. 
Let us examine the quantity
\begin{align}
\partial_{\mb y}^{\bs\sigma} Q_3(\mb y) &= \eps^{2(p+|\bs\alpha|)}\partial_{\mb y}^{\bs\sigma} \int_\Real \int_{\Real^n} g(t,\mb x,\mb y)\partial_t^p \partial_{\mb x}^{\bs\alpha}u^+_k(t,\mb x,\mb y)^* \partial_t^p \partial_{\mb x}^{\bs\alpha}u^-_k(t,\mb x,\mb y)\,\psi(t,\mb x) d\mb x \, dt \nonumber \\
&= \left(\frac{1}{2\pi \eps}\right)^{n} \int_{K_0\times K_0}\int_{K_1}  \partial^{\bs\sigma}_{\mb y} I(\mb x,\mb y,\mb z,\mb z') \, d\mb x\, d\mb z \, d\mb z', \label{partialQ3}
\end{align}
where
\begin{align*}
	I(\mb x,\mb y,\mb z,\mb z') = \eps^{2(p+|\bs\alpha|)} \int_\Real & \partial_t^p \partial_{\mb x}^{\bs\alpha} w_k^+(t,\mb x-\mb q^+(t,\mb y,\mb z),\mb y,\mb z)^* \partial_t^p \partial_{\mb x}^{\bs\alpha}w_k^-(t,\mb x-\mb q^-(t,\mb y,\mb z'),\mb y,\mb z')\\
	&\times g(t,\mb x,\mb y)\psi(t,\mb x)\, dt,
\end{align*}
with
\[
	w_k^\pm(t,\mb x,\mb y,\mb z) = A_k^\pm(t,\mb x,\mb y,\mb z) \varrho_\eta(\mb x) e^{i\Phi^\pm_k(t,\mb x,\mb y,\mb z)/\eps}.
\]
Recalling Lemma \ref{lemma:partialwk}, we can find $s_k^\pm\in \mc S_\eta^\pm$ such that
\begin{align*}
	I(\mb x,\mb y, \mb z,\mb z') &= \int_\Real s_k^+(t,\mb x-\mb q^+(t,\mb y,\mb z),\mb y,\mb z)^* s_k^-(t,\mb x-\mb q^-(t,\mb y,\mb z'),\mb y,\mb z') g(t,\mb x,\mb y)\psi(t,\mb x)\, dt\\
	&= \sum_{\ell=0}^{L_1}\sum_{m=0}^{L_2}\eps^{\ell+m}\int_\Real a_{\ell m}(t,\mb x,\mb y,\mb z,\mb z') \psi(t,\mb x) e^{i\vartheta_k(t,\mb x,\mb y,\mb z,\mb z')/\eps} \, dt,
\end{align*}
where 
\[
a_{\ell m}(t,\mb x,\mb y,\mb z,\mb z') = g(t,\mb x,\mb y)p_{\ell}^+(t,\mb x-\mb q^+(t,\mb y,\mb z),\mb y,\mb z)^* p_m^-(t,\mb x-\mb q^-(t,\mb y,\mb z'),\mb y,\mb z'),
\] 
with $p_\ell^+, p_m^-\in \mc P_\eta$, and
\be\label{Thetak}
	\vartheta_k(t,\mb x,\mb y,\mb z,\mb z')=\Phi_k^-(t,\mb x-\mb q^-(t,\mb y,\mb z'),\mb y,\mb z')-\Phi_k^+(t,\mb x-\mb q^+(t,\mb y,\mb z),\mb y,\mb z)^*.\ee
By Proposition \ref{prop:smooth}, we have $\vartheta_k\in C^\infty$, and $a_{\ell m}\in \Wcal_{\eta}$ because both 
$p_\ell^+,p_m^-$ are supported in the ball ${\mc B}_{2\eta}$. 
Therefore, by Lemma \ref{lemma:partialI}, there exist functions $f_{\ell mj\bs\sigma}\in \Wcal_{\eta}$ such that
\be\label{partialI2}
\partial_{\mb y}^{\bs\sigma} I(\mb x,\mb y,\mb z,\mb z') =  \sum_{j=0}^{|\bs\sigma|}\sum_{\ell=0}^{L_1}\sum_{m=0}^{L_2} \eps^{\ell+m-j} \int_\Real f_{\ell mj\bs\sigma}(t,\mb x,\mb y,\mb z,\mb z') \psi(t,\mb x) e^{i\vartheta_k(t,\mb x,\mb y,\mb z,\mb z')/\eps}dt.
\ee

The following proposition shows that $\vartheta_k$ has no stationary points in $t\in[0,T]$ for all $\mb x\in \Sigma_\mu$ with a small enough $\mu$. Note that this is true even for $\mb z=\mb z'$.
\begin{proposition}\label{prop:nostatpt2}
There exist $0<\mu\leq 1$ and $\nu>0$ such that for all $\mb y\in \Gamma_c$, $\mb z\in K_0$, $\mb z' \in K_0$, $t\in[0,T]$ and for all $\mb x\in \Sigma_\mu$,
\be\label{partialtThetak}
	|\partial_t \vartheta_k(t,\mb x,\mb y,\mb z,\mb z')|\geq \nu.
\ee
\end{proposition}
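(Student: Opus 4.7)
\textbf{Proof plan for Proposition \ref{prop:nostatpt2}.} Let $\tilde\Phi_k^\pm(t,\mb x,\mb y,\mb z):=\Phi_k^\pm(t,\mb x-\mb q^\pm(t,\mb y,\mb z),\mb y,\mb z)$, so that
\[
\vartheta_k(t,\mb x,\mb y,\mb z,\mb z')=\tilde\Phi_k^-(t,\mb x,\mb y,\mb z')-\tilde\Phi_k^+(t,\mb x,\mb y,\mb z)^*.
\]
The strategy is to evaluate $\partial_t\tilde\Phi_k^\pm$ along the central ray, where the Gaussian-beam ODEs force a clean expression, and then propagate this value to all of $\Sigma_\mu$ by a first-order Taylor expansion in $\mb x$ whose remainder can be absorbed by choosing $\mu$ small.

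First I would compute $\partial_t\tilde\Phi_k^\pm$ at $\mb x=\mb q^\pm$. From \eqref{phidef} we read off $\Phi_k^\pm(t,0,\mb y,\mb z)=\phi_0^\pm$ and $\nabla_{\mb x}\Phi_k^\pm(t,0,\mb y,\mb z)=\mb p^\pm$, both real. By the chain rule combined with the ODEs \eqref{IC},
\[
\partial_t\tilde\Phi_k^\pm(t,\mb q^\pm,\mb y,\mb z)=\dot\phi_0^\pm-\dot{\mb q}^\pm\cdot\mb p^\pm=0\mp c(\mb q^\pm)\frac{|\mb p^\pm|^2}{|\mb p^\pm|}=\mp c(\mb q^\pm)|\mb p^\pm|,
\]
which is real. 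A direct differentiation using the ODEs in \eqref{IC} also shows $\frac{d}{dt}[c(\mb q^\pm)|\mb p^\pm|]=0$, so that together with the initial data \eqref{GBini},
\[
c(\mb q^\pm(t,\mb y,\mb z))\,|\mb p^\pm(t,\mb y,\mb z)|=c(\mb z)\,|\nabla\varphi_0(\mb z,\mb y)|\geq \alpha_0:=c_{\min}\inf_{(\mb z,\mb y)\in K_0\times\Gamma_c}|\nabla\varphi_0(\mb z,\mb y)|>0,
\]
where the positivity follows from \ref{ASSUM1}, \ref{ASSUM3} and compactness of $K_0\times\Gamma_c$.

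Next I would handle the off-ray values. By Proposition \ref{prop:smooth}, $\partial_t\tilde\Phi_k^\pm\in C^\infty$, and since $\mb q^\pm$ is bounded on $[0,T]\times\Gamma_c\times K_0$, every $\mb x$ within distance $2$ of some $\mb q^\pm$ lies in a fixed compact region of $\Real^n$. Hence there is a uniform constant $C$ such that for every $\mu\leq 1$ and every $\mb x$ within distance $2\mu$ of $\mb q^\pm$,
\[
\partial_t\tilde\Phi_k^\pm(t,\mb x,\mb y,\mb z)=\mp c(\mb q^\pm)|\mb p^\pm|+E_\pm(t,\mb x,\mb y,\mb z),\qquad |E_\pm|\leq C\mu,
\]
uniformly in $(t,\mb y,\mb z)\in[0,T]\times\Gamma_c\times K_0$. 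For $\mb x\in\Sigma_\mu$ both estimates apply simultaneously, so taking the real part,
\[
\operatorname{Re}\partial_t\vartheta_k(t,\mb x,\mb y,\mb z,\mb z')=c(\mb q^-)|\mb p^-|+c(\mb q^+)|\mb p^+|+\operatorname{Re}(E_--E_+^*)\geq 2\alpha_0-2C\mu.
\]
Choosing $\mu=\min\{1,\alpha_0/(4C)\}$ and $\nu=\alpha_0$ then yields $|\partial_t\vartheta_k|\geq|\operatorname{Re}\partial_t\vartheta_k|\geq\nu$, which is the required estimate.

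The only real subtlety is the uniformity of the Lipschitz constant $C$: one must check that $\partial_t\tilde\Phi_k^\pm$ has a gradient in $\mb x$ that is bounded on a set that does not depend on $(t,\mb y,\mb z)$ varying in the compact set $[0,T]\times\Gamma_c\times K_0$. This follows from the smoothness statement of Proposition~\ref{prop:smooth} and the fact that $\mb q^\pm$ is itself uniformly bounded on that compact set, so that $\mb x\in\mc B_{2\mu}(\mb q^\pm)$ stays in a fixed compact subset of $\Real^n$.
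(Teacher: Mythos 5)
Your proposal is correct and follows essentially the same route as the paper: evaluate $\partial_t\vartheta_k$ on the central rays to get $c(\mb q^-)|\mb p^-|+c(\mb q^+)|\mb p^+|$, bound this below via conservation of the Hamiltonian $c(\mb q^\pm)|\mb p^\pm|$ together with \ref{ASSUM1} and \ref{ASSUM3}, and absorb the off-ray contribution (which you control by a uniform Lipschitz bound, where the paper writes out the remainder $R_k$ explicitly and bounds it by $C_k\mu$) by taking $\mu$ small.
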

\begin{proof}
Differentiating \eqref{Thetak} with respect to $t$ and using \eqref{phidef} and \eqref{IC}, we obtain
\be\label{pat}
\partial_t \vartheta_k= -\partial_t\mb q^-\cdot \mb p^- +\partial_t\mb q^+\cdot \mb p^+ + R_k = -c(\mb q^-,\mb y)|\mb p^-|-c(\mb q^+,\mb y)|\mb p^+| + R_k,
\ee
where $R_k = R_k(t,\mb x,\mb y,\mb z,\mb z')$ reads
\begin{align*}
R_k &= (\mb x-\mb q^-)\cdot \partial_t\mb p^-- (\mb x-\mb q^+)\cdot \partial_t \mb p^+ -\partial_t \mb q^- \cdot M^-(\mb x-\mb q^-)+\partial_t\mb q^+ \cdot(M^+)^*(\mb x-\mb q^+)\nonumber \\
&+ \frac12(\mb x-\mb q^-)\cdot \partial_t M^- (\mb x-\mb q^-)+ \frac12(\mb x-\mb q^+)\cdot (\partial_t M^+)^* (\mb x-\mb q^+)\nonumber \\
&+ \sum_{|\bs\beta|=3}^{k+1} \frac{1}{\bs\beta!}\left(\partial_t \phi_{\bs\beta}^- (\mb x-\mb q^-)^{\bs\beta} + \phi_{\bs\beta}^- \partial_t(\mb x-\mb q^-)^{\bs\beta}\right)\\
&- \sum_{|\bs\beta|=3}^{k+1} \frac{1}{\bs\beta!}\left(\partial_t \phi_{\bs\beta}^+ (\mb x-\mb q^+)^{\bs\beta} + \phi_{\bs\beta}^+ \partial_t(\mb x-\mb q^+)^{\bs\beta}\right)^*.
\end{align*}
Since $\mb q^\pm, \mb p^\pm, M^\pm, \phi_{\bs\beta}^\pm$ are smooth in all variables by Proposition \ref{prop:smooth}, their time derivative is uniformly bounded in the compact set $[0,T]\times\Gamma_c\times K_0$. If $\mb x\in\Sigma_\mu$ for some $0<\mu\leq 1$, then both $|\mb x-\mb q^-|\leq 2\mu$ and $|\mb x-\mb q^+|\leq 2\mu$ and we arrive at
\[
|R_k|\leq C_k\mu,
\]
with $C_k$ 
independent of $\mu$. 

Next, we note that $H(\mb p^+,\mb q^+,\mb y) = c(\mb q^+,\mb y)|\mb p^+|$ is conserved along the ray,
\[
	c(\mb q^+(t,\mb y,\mb z),\mb y)|\mb p^+(t,\mb y,\mb z)| = c(\mb q^+(0,\mb y,\mb z),\mb y)|\mb p^+(0,\mb y,\mb z)| = c(\mb z,\mb y)|\nabla \varphi_0(\mb z,\mb y)|,
\]
and therefore by \ref{ASSUM1} and \ref{ASSUM3} we obtain a uniform lower bound on $c(\mb q^+,\mb y)|\mb p^+|$, for all $t\in \Real$, $\mb y\in\Gamma_c$ and $\mb z\in K_0$, 
\[c(\mb q^+(t,\mb y,\mb z),\mb y)|\mb p^+(t,\mb y,\mb z)| \geq c_{\text{min}}\inf_{\substack{\mb z\in K_0\\ \mb y\in \Gamma_c}}|\nabla \varphi_0(\mb z,\mb y)|\geq \gamma >0,\]
and similarly, from the conservation of $H(\mb p^-,\mb q^-,\mb y)$ we obtain $c(\mb q^-(t,\mb y,\mb z'),\mb y)|\mb p^-|\geq \gamma >0$.
Thus from \eqref{pat} we get
\[
|\partial_t \vartheta_k|\geq c(\mb q^-,\mb y)|\mb p^-|+c(\mb q^+,\mb y)|\mb p^+|- |R_k|\geq 2 \gamma - C_k\mu \geq \nu>0,
\]
for all $\mb x\in \Sigma_\mu$ 
upon taking $\mu$ small enough.
\end{proof}


We are now ready to finalize the proof of Theorem~\ref{th:main}.
We 
first choose $0<\mu\leq\eta<\infty$ such that Proposition \ref{prop:nostatpt2} holds. 
Furthermore, note that the admissibility condition implies that for all $\mb x$ satisfying $|\mb x-\mb q^\pm|\leq 2\eta$ we have $\Im \Phi_k^\pm(t,\mb x-\mb q^\pm,\mb y,\mb z)\geq \delta |\mb x-\mb q^\pm|^2$. We can therefore estimate 
$\Im \vartheta_k$ with $\vartheta_k$ as in \eqref{Thetak} as
\begin{align}
 \Im \vartheta_k(t,\mb x,\mb y,\mb z,\mb z') &= \Im \Phi_k^-(t,\mb x-\mb q^-(t,\mb y,\mb z'),\mb y,\mb z') + \Im \Phi_k^+(t,\mb x-\mb q^+(t,\mb y,\mb z),\mb y,\mb z)\nonumber \\
 & \geq  \delta |\mb x-\mb q^-(t,\mb y,\mb z')|^2 + \delta |\mb x-\mb q^+(t,\mb y,\mb z)|^2,\label{ImThetak}
\end{align}
for all $\mb x\in \Sigma_{\eta}$. 
To estimate $|\partial_{\mb y}^{\bs\sigma} Q_3|$ we recall \eqref{partialQ3},
\be\label{partialQ3est}
\left|\partial_{\mb y}^{\bs\sigma} Q_3(\mb y)\right| \leq \left(\frac{1}{2\pi \eps}\right)^n \int_{K_0\times K_0}\int_{K_1}\left|\partial_{\mb y}^{\bs\sigma} I(\mb x,\mb y,\mb z,\mb z')\right| \, d\mb x \, d\mb z \, d\mb z',
\ee
and by \eqref{partialI2} and \ref{ASSUM5} one has
\be\label{partialI2est}
\left|\partial_{\mb y}^{\bs\sigma} I(\mb x,\mb y,\mb z,\mb z')\right| \leq \sum_{j=0}^{|\bs \sigma|}\sum_{\ell=0}^{L_1}\sum_{m=0}^{L_2}\eps^{-|\bs\sigma|}\left|\int_\Real f_{\ell mj\bs\sigma}(t,\mb x,\mb y,\mb z,\mb z')\psi(t,\mb x)e^{i\vartheta_k(t,\mb x,\mb y,\mb z,\mb z')/\eps}\, dt\right|.
\ee
Let us introduce the function
\[
	g_1(t,\mb x,\mb y,\mb z,\mb z') = \varrho_{\mu}(\mb x-\mb q^+(t,\mb y,\mb z))\varrho_{\mu}(\mb x-\mb q^-(t,\mb y,\mb z')),
\]
so that $g_1\in \mc W_\mu$. Then for $g_2 := 1 - g_1 \in C^\infty$ and $\text{supp }g_2(t,\cdot,\mb y,\mb z,\mb z')\subset \Real^n\setminus \Sigma_{\mu/2}$ for all $t,\mb y,\mb z,\mb z'$. We will now regard \eqref{partialI2est} one term at a time, and use the partition of unity $1 = g_1 + g_2$, 
\[
	\int_\Real f_{\ell mj\bs\sigma}\psi e^{i\vartheta_k/\eps}\, dt = \int_\Real f_{\ell mj\bs\sigma}\psi(g_1 + g_2)e^{i\vartheta_k/\eps}\, dt = {\textcircled{\raisebox{-0.9pt}{1}}} + {\textcircled{\raisebox{-0.9pt}{2}}}.
\]
Let us first estimate the term {\textcircled{\raisebox{-0.9pt}{1}}}.
We have $\Sigma_{\mu/2}\subset\Sigma_{\eta}$ and therefore for $g_{\ell mj \bs\sigma}:= f_{\ell mj \bs\sigma}\psi g_1$ we have $\text{supp }g_{\ell mj\bs\sigma}(\cdot,\mb x,\mb y,\mb z,\mb z')\subset [0,T]$, $\forall \mb x,\mb y,\mb z, \mb z'$, and $\text{supp }g_{\ell mj\bs\sigma}(t,\cdot,\mb y,\mb z,\mb z')\subset \Sigma_{\mu/2}(t,\mb y,\mb z,\mb z') \cap K_1$, $\forall t,\mb y,\mb z, \mb z'$.
We now restrict $(t,{\mb y}, {\mb z},{\mb z}')$ to
the compact set $[0,T]\times \Gamma_c\times K_0\times K_0$.
Since the gradient $\partial_t\vartheta_k$ does not vanish for $\mb x\in \Sigma_{\mu/2}$ on this set
by Proposition \ref{prop:nostatpt2} we can employ the non-stationary phase Lemma \ref{lemma:nonstat}, 
\begin{align*}
|{\textcircled{\raisebox{-0.9pt}{1}}}| &\leq 
\left|\int_\Real g_{\ell mj\bs\sigma}(t,\mb x,\mb y,\mb z,\mb z')e^{i\vartheta_k(t,\mb x,\mb y,\mb z,\mb z')/\eps}\, dt\right| \\
& \leq C_K D_K\eps^K \sum_{q=0}^K \int_\Real \frac{|\partial_t^{q}g_{\ell mj\bs\sigma}(t,\mb x,\mb y,\mb z,\mb z')|}{|\partial_t \vartheta_k(t,\mb x,\mb y,\mb z,\mb z')|^{2K-q}} e^{-\Im \vartheta_k(t,\mb x,\mb y,\mb z,\mb z')/\eps} \, dt,
\end{align*}
for every $K\in \mathbb N_0$.
Here,
$C_K$ only depends on $K$ and
$$
D_K= \left(1+\left\|
\vartheta_k(\,\cdot\,,\mb x,\mb y,\mb z,\mb z')
\right\|_{C^{K+1}([0,T])}\right)^K\leq \tilde{D}_K,
$$
since $\vartheta\in C^\infty$ and $(\mb x,\mb y,\mb z,\mb z')$
belongs to the compact set $K_1\times\Gamma_c\times K_0\times K_0$.
Similarly, since $g_{\ell mj\bs\sigma}\in C^\infty$, its time derivatives are uniformly bounded: for all $t\in [0,T]$, $\mb y\in \Gamma_c$, $\mb z,\mb z'\in K_0$ and ${\mb x}\in K_1$,
\[
 |\partial_t^{q}g_{\ell mj\bs\sigma}(t,\mb x,\mb y,\mb z,\mb z')|\leq C_{\ell mj\bs\sigma q}.
\]
Therefore, using the fact that $\Im \vartheta_k \geq 0$ from \eqref{ImThetak} and recalling \eqref{partialtThetak} we obtain
\[
|{\textcircled{\raisebox{-0.9pt}{1}}}| \leq C_K \eps^K \sum_{q=0}^K \int_{0}^T \frac{C_{\ell mj\bs\sigma q}}{{\nu}^{2K-q}} \, dt \leq \widetilde C_{K \ell mj \bs\sigma} \eps^K,
\]
where $\widetilde C_{K \ell mj \bs\sigma}$ also depends on $T,\mu,\eta,\Gamma_c,k,\nu,p,\bs\alpha$, but is independent of $\eps$.

Secondly, let us estimate the term {\textcircled{\raisebox{-0.9pt}{2}}}. Since $\text{supp }g_2(t,\cdot,\mb y,\mb z,\mb z')\subset \Real^n\setminus\Sigma_{\mu/2}(t,\mb y,\mb z,\mb z')$, {\textcircled{\raisebox{-0.9pt}{2}}} is only nonzero for either $|\mb x-\mb q^{+}(t,\mb y,\mb z)|>2\mu$ or $|\mb x-\mb q^{-}(t,\mb y,\mb z')|>2\mu$ (or both) and therefore by \eqref{ImThetak},
\[
 \Im \vartheta_k(t,\mb x,\mb y,\mb z,\mb z') \geq \delta \mu^2,
\]
whenever $t\in [0,T]$, $\mb y\in \Gamma_c$, $\mb z,\mb z'\in K_0$
and ${\mb x}$ is in the support of $g_2$.
As $h_{\ell mj\bs\sigma}:= f_{\ell mj\bs\sigma} \psi g_2\in C^\infty$, {\textcircled{\raisebox{-0.9pt}{2}}} can be estimated as
\begin{align*}
|{\textcircled{\raisebox{-0.9pt}{2}}}|&\leq \int_0^T \left|h_{\ell mj\bs\sigma}(t,\mb x,\mb y,\mb z,\mb z')\right| e^{-\Im \vartheta_k(t,\mb x,\mb y,\mb z,\mb z')/\eps}\, dt \\
&\leq T \widetilde C_{\ell mj\bs\sigma} e^{-\delta \mu^2/\eps},
\end{align*}
for all $\mb y\in \Gamma_c$, $\mb z,\mb z'\in K_0$
and ${\mb x}\in K_1$.
%
%
%
%
%
%
%
%
Collecting {\textcircled{\raisebox{-0.9pt}{1}}} and {\textcircled{\raisebox{-0.9pt}{2}}} together, we obtain from \eqref{partialI2est}
\begin{align*}
\left|\partial^{\bs\sigma}_{\mb y} I(\mb x,\mb y,\mb z,\mb z')\right| &\leq  \sum_{j=0}^{|\bs\sigma|} \sum_{\ell=0}^{L_1} \sum_{m=0}^{L_2} \eps^{-|\bs\sigma|} \left(|{\textcircled{\raisebox{-0.9pt}{1}}}| + |{\textcircled{\raisebox{-0.9pt}{2}}}|\right)\\
&\leq \max_{j,\ell,m}\eps^{-|\bs\sigma|} \left(\widetilde C_{K\ell mj \bs\sigma} \eps^K + T \widetilde C_{\ell m j \bs\sigma} e^{-\delta\mu^2/\eps}\right).
\end{align*}
Finally, by \eqref{partialQ3est} we have
\begin{align*}
\left|\partial_{\mb y}^{\bs\sigma} Q_3(\mb y)\right| 
&\leq (2\pi)^{-n} \eps^{-|\bs\sigma|-n} |K_0|^2|K_1| \max_{j,\ell,m} \left(\widetilde C_{K\ell mj \bs\sigma} \eps^K + T \widetilde C_{\ell m j \bs\sigma} e^{-\delta\mu^2/\eps}\right).
\end{align*}
That is, choosing $K\geq n+|\bs\sigma|$, the first term is bounded
in $\eps$. Since $\delta>0$, the second term decays fast as a function of $\eps$ for any $\bs\sigma$. Therefore, there exists an upper bound $C_{\bs\sigma}$ such that 
\[\sup_{\mb y\in \Gamma_c}\left|\partial^{\bs\sigma}_{\mb y} Q_3(\mb y)\right| \leq C_{\bs\sigma},\]
where $C_{\bs\sigma}$ depends on $T,\mu,\eta,\Gamma_c,k,\delta,L_1,L_2,p,\bs\alpha$, but is uniform in $\eps$.
Recalling \eqref{Qnew} and \eqref{supQ1} we then arrive at
\[
	\sup_{\mb y\in\Gamma_c} \left|\partial_{\mb y}^{\bs\sigma} {\mc Q}_{\text{GB}}^{p,\bs\alpha}(\mb y)\right| \leq \sup_{\mb y\in\Gamma_c} \left|\partial_{\mb y}^{\bs\sigma} Q_1(\mb y)\right| + \sup_{\mb y\in\Gamma_c} \left|\partial_{\mb y}^{\bs\sigma} Q_2(\mb y)\right| + 2 \sup_{\mb y\in\Gamma_c} \left|\partial_{\mb y}^{\bs\sigma} Q_3(\mb y)\right| \leq \widetilde C_{\bs\sigma},
\]
with $C_{\bs\sigma}$ dependent on $T,\mu,\eta,\Gamma_c,k,K,\delta,\nu, L_1,L_2,p,\bs\alpha$, but independent of $\eps$, which concludes the proof 
of Theorem~\ref{th:main}.


\subsection{Numerical example}
A numerical example was presented in Section \ref{sec:whatcouldgowrong} comparing the QoIs $\widetilde{\mathcal Q}$ in \eqref{QoIsmall0} and $\mathcal Q$ in \eqref{QoIbig0}. We were able to obtain the exact solution since the speed was constant and the spatial variable was one-dimensional. In higher dimensions, however, caustics can appear and the exact solution is typically no longer available. Instead, we make use of the GB approximations $\widetilde{\mathcal Q}_{\text{GB}}$ in \eqref{QoIsmall0GB} and $\mathcal Q_{\text{GB}}:= \mathcal Q_{\text{GB}}^{0,\textbf 0}$ in \eqref{QoIbigGB}.

Let us consider a 2D wave equation \eqref{waveeq} with $\mb x = [x_1,x_2]$. The initial data include two random parameters $\mb y = [y_1,y_2]$,
\begin{align*}
	B_0(\mb x,\mb y) &= e^{-10((x_1+1)^2 + (x_2-y_1)^2)} + e^{-10((x_1-1)^2 + (x_2-y_1)^2)},  && B_1(\mb x,\mb y) = 0, \\
	\varphi_0(\mb x,\mb y) &= |x_1|+(x_2-y_1)^2,  && c(\mb x,\mb y) = y_2. 
\end{align*}
The test function is chosen as
\[
 \psi(\mb x) = \left\{ \begin{array}{ll}
 			e^{-\frac{|\mb x|^2}{1-|\mb x|^2}}, & \text{for } |\mb x|\leq 1,\\
 			0, & \text{otherwise}.
 			\end{array}\right.
\]
This setup corresponds to two pulses centered in $(\pm 1,y_1)$ at $t=0$, moving along the $x_1$ axis, while spreading or contracting in the $x_2$ direction, see Figure \ref{fig:solforqoi1b}, where we plot the modulus of the first-order GB solution $|u_{1}(t,\mb x,\mb y)|$ at $t=1$ for various combinations of $y$. The central circle denotes the support of the test function $\psi$.


    	\begin{figure}[h!]
            \centering
                \begin{minipage}[b]{0.32\textwidth}
                    \includegraphics[width=\textwidth]{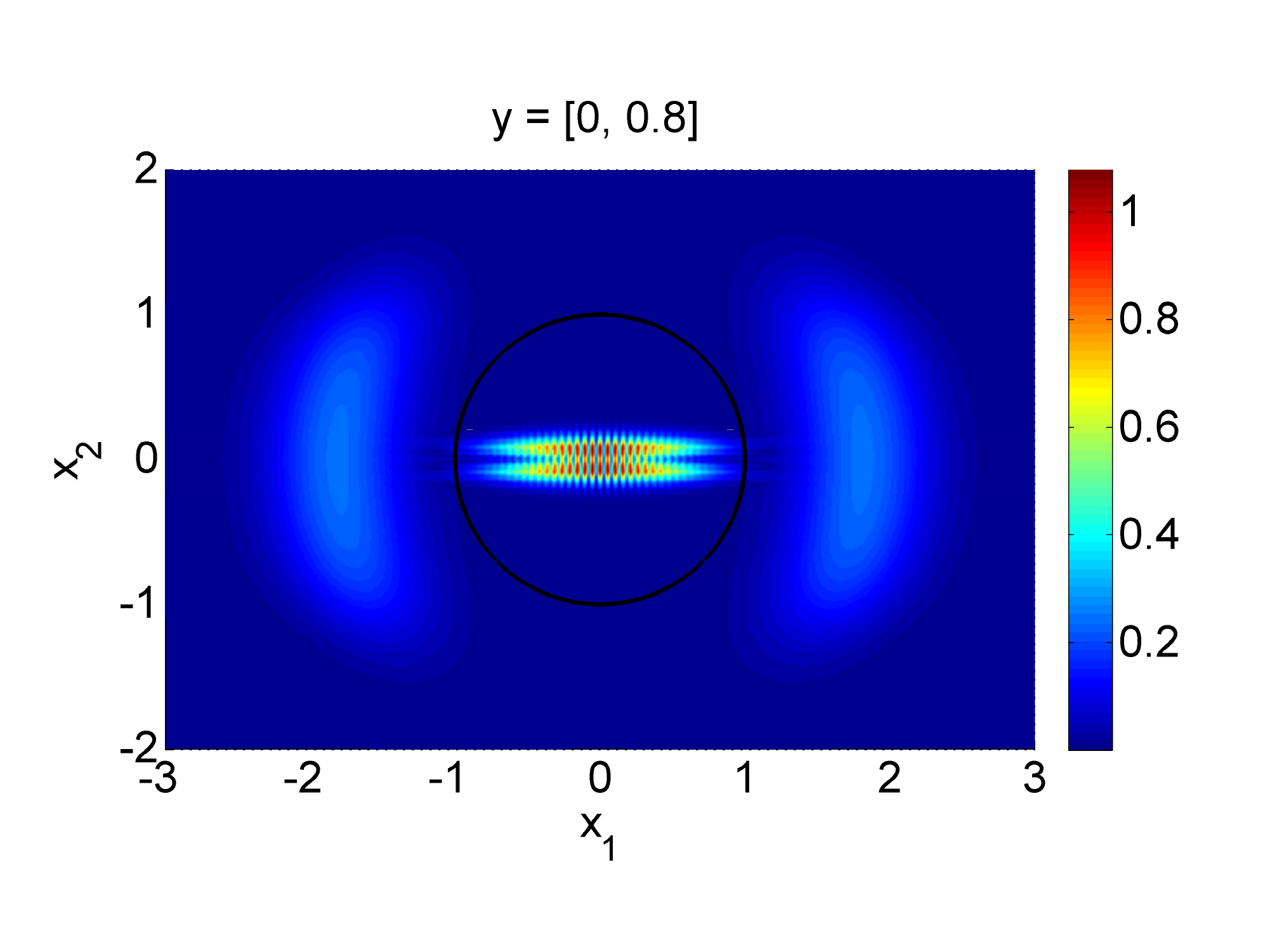}
                \end{minipage}
                \begin{minipage}[b]{0.32\textwidth}
                    \includegraphics[width=\textwidth]{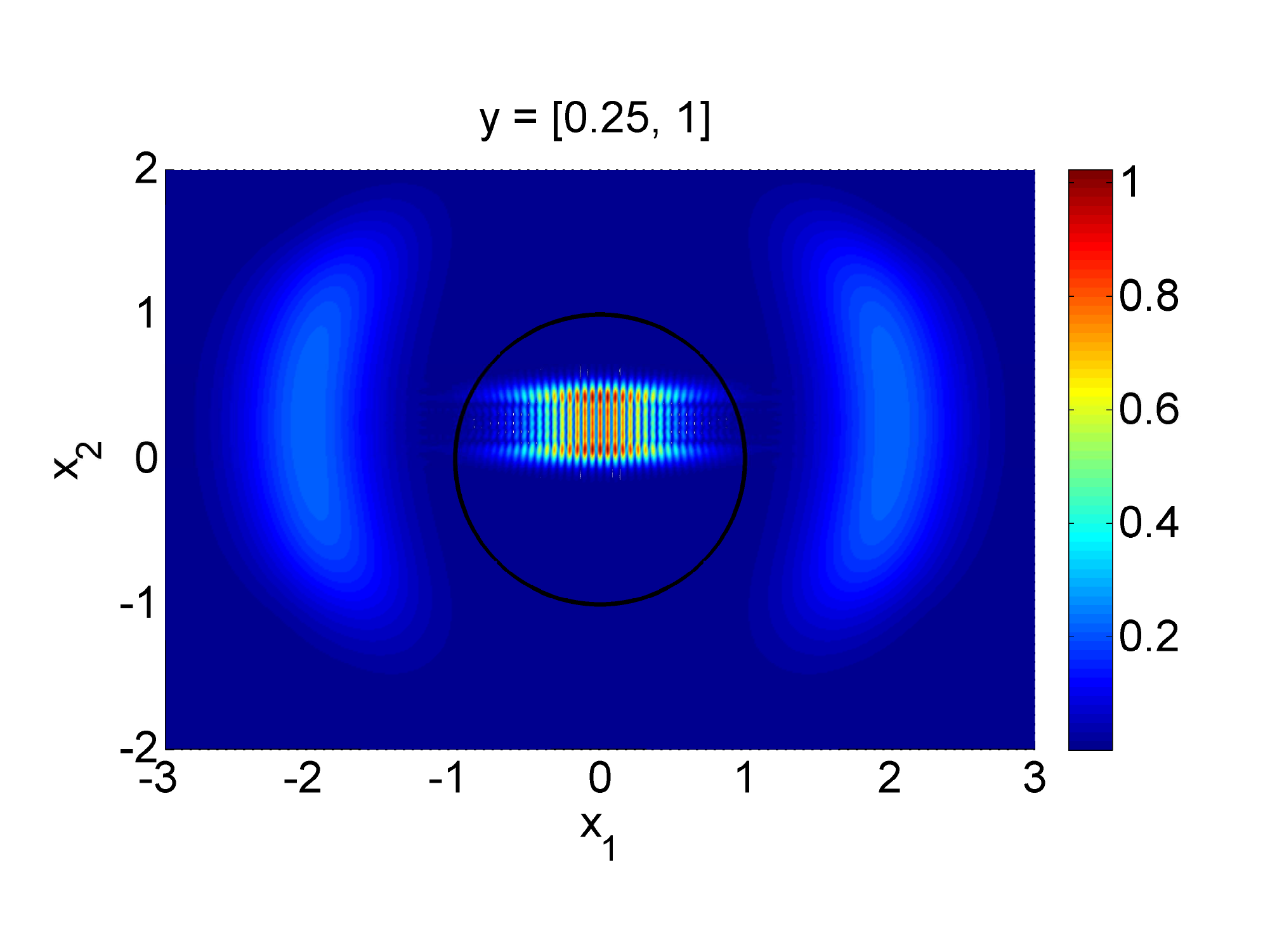}
                \end{minipage}
                \begin{minipage}[b]{0.32\textwidth}
                    \includegraphics[width=\textwidth]{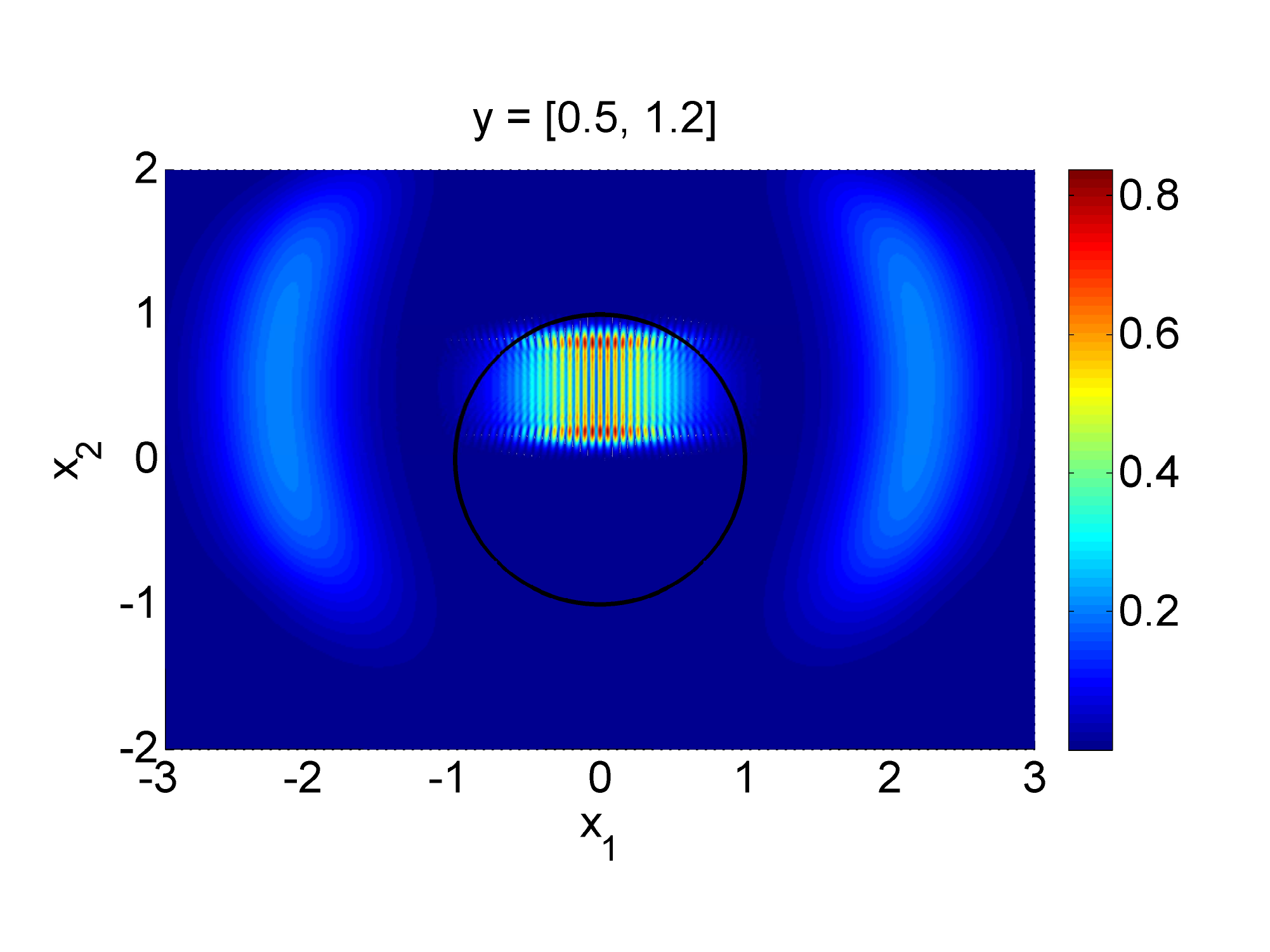}
                \end{minipage}
            \caption{The modulus of the GB solution $|u_1(t,\mb x,\mb y)|$ for $\eps = 1/60$ and $\varphi_0(\mb x,\mb y) = |x_1|+(x_2-y_1)^2$, at time $t=1$, for various $y$. The circle denotes the support of the test function $\psi$.} 
            \label{fig:solforqoi1b}
		\end{figure}
		
By analogous arguments as in Section \ref{sec:whatcouldgowrong}, the part of the solution overlapping in the origin is from the same GB mode. Hence, the QoI $\widetilde{\mathcal Q}_{\text{GB}}$ with the test function supported around the origin should not oscillate. This is indeed the case, as seen in the left column of Figure \ref{fig:qoiGB}, where the random variables are chosen as $y_1\in[0,0.5]$, $y_2\in[0.8,1.2]$ and we define $r\in [0,1]$, such that $[y_1,y_2] = [0,0.8] + r[0.5,0.4]$ (i.e. the diagonal parameter). We plot $\widetilde{\mathcal Q}_{\text{GB}}$ and its first and second derivatives with respect to $r$ at time $t=1$ as a function of $r$.

    	\begin{figure}[h!]
            \centering
                \begin{minipage}[b]{0.32\textwidth}
                    \includegraphics[width=\textwidth]{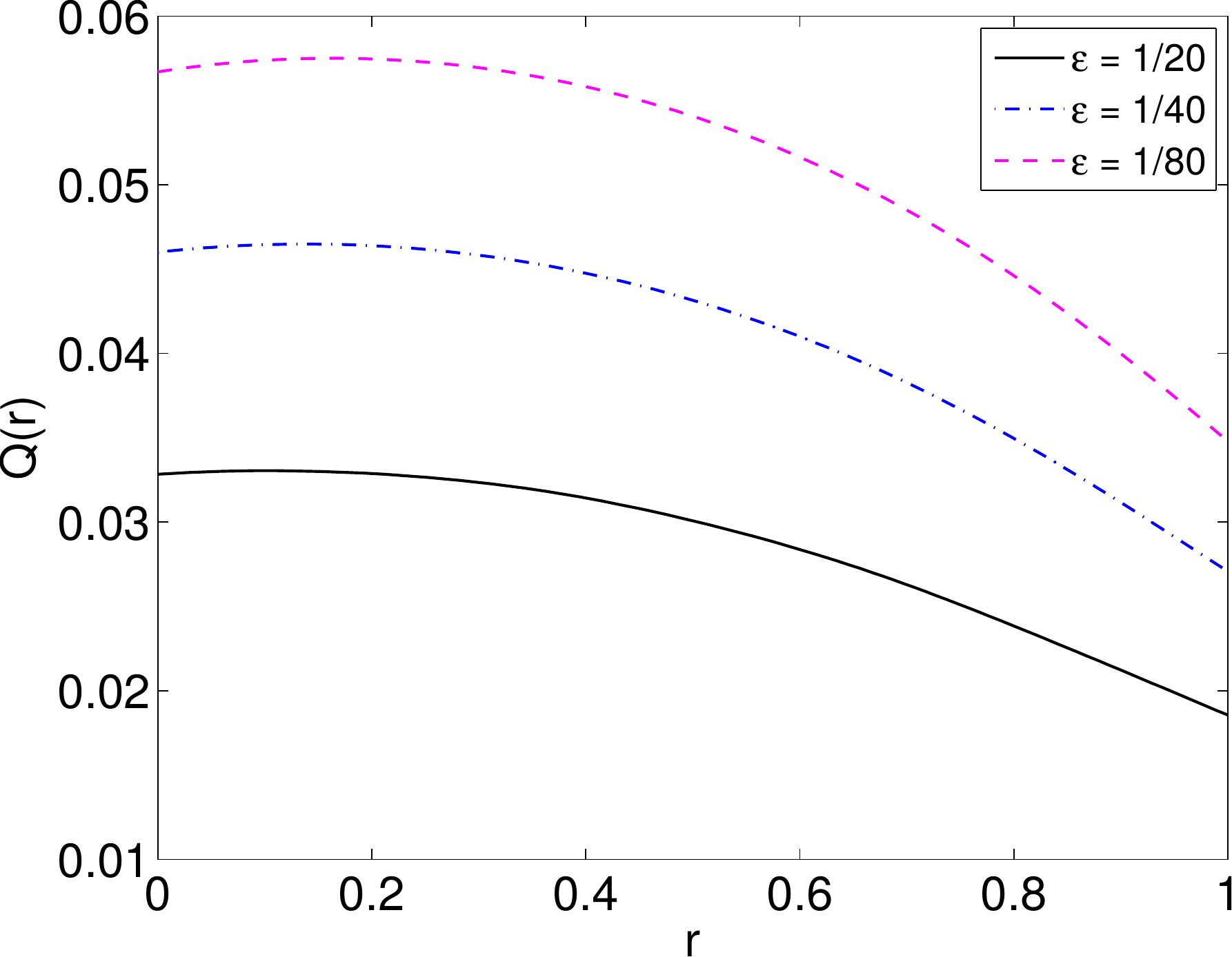}
                \end{minipage}
                \begin{minipage}[b]{0.32\textwidth}
                    \includegraphics[width=\textwidth]{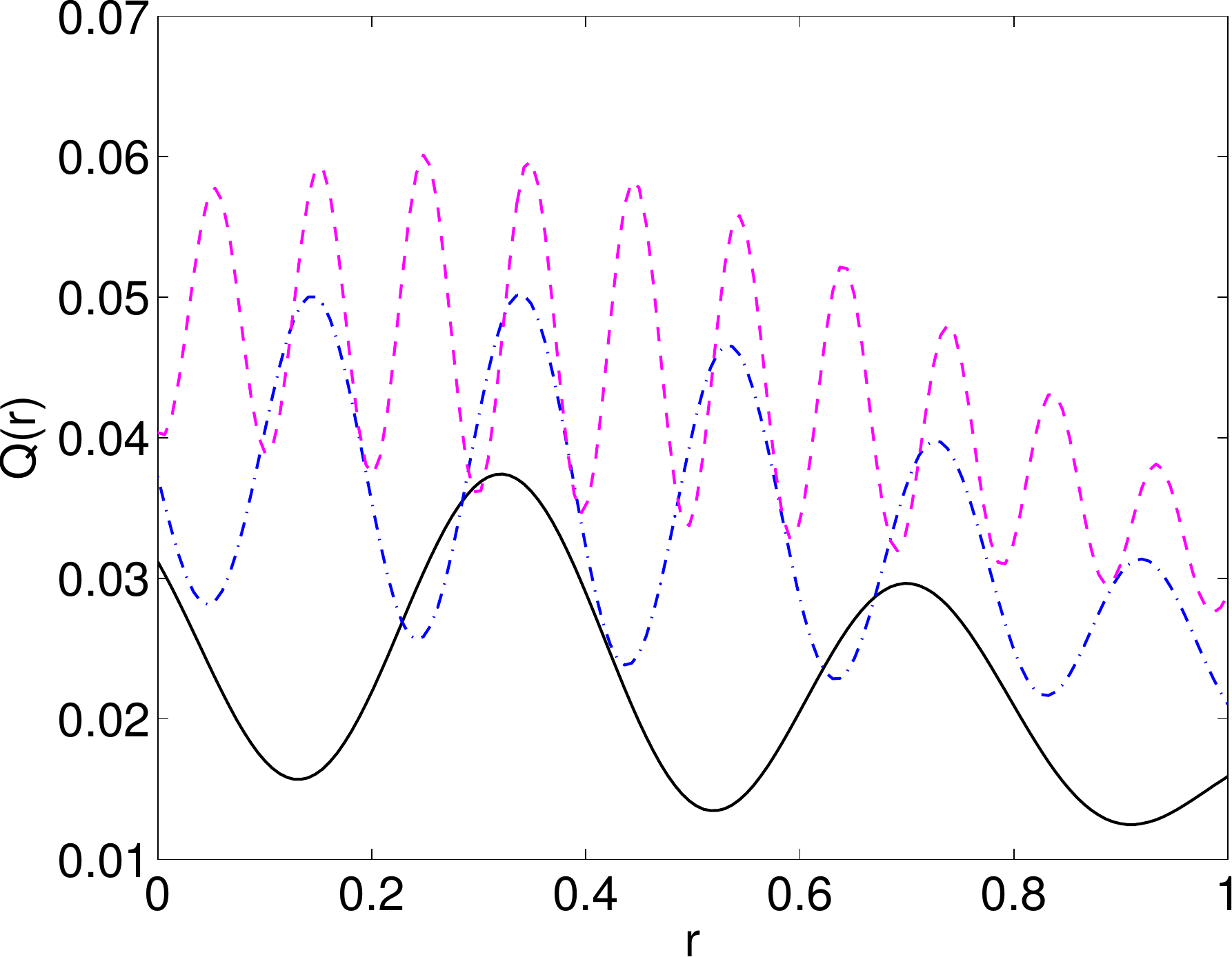}
                \end{minipage}
                \begin{minipage}[b]{0.32\textwidth}
                    \includegraphics[width=\textwidth]{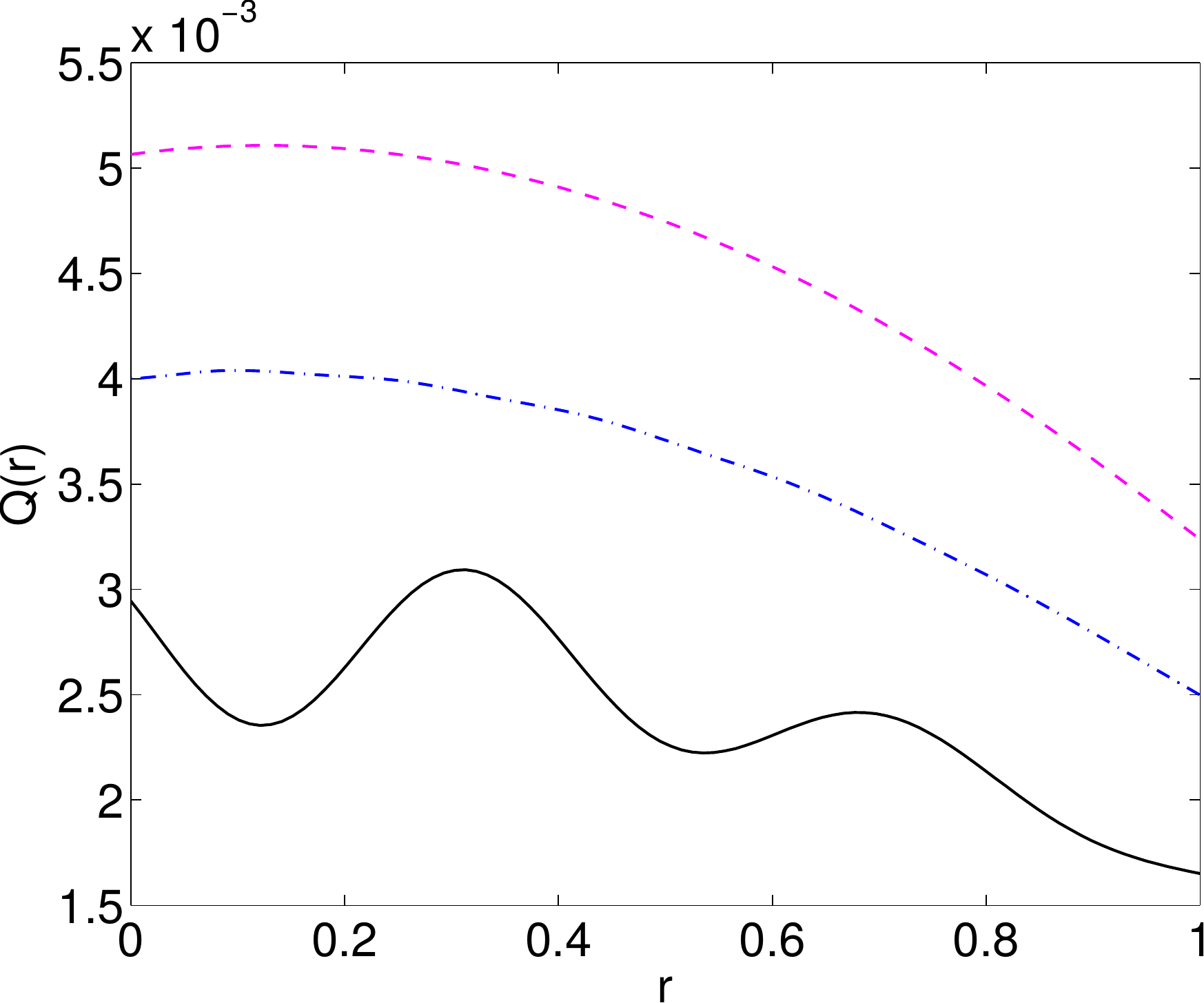}
                \end{minipage}                
                
                \begin{minipage}[b]{0.32\textwidth}
                    \includegraphics[width=\textwidth]{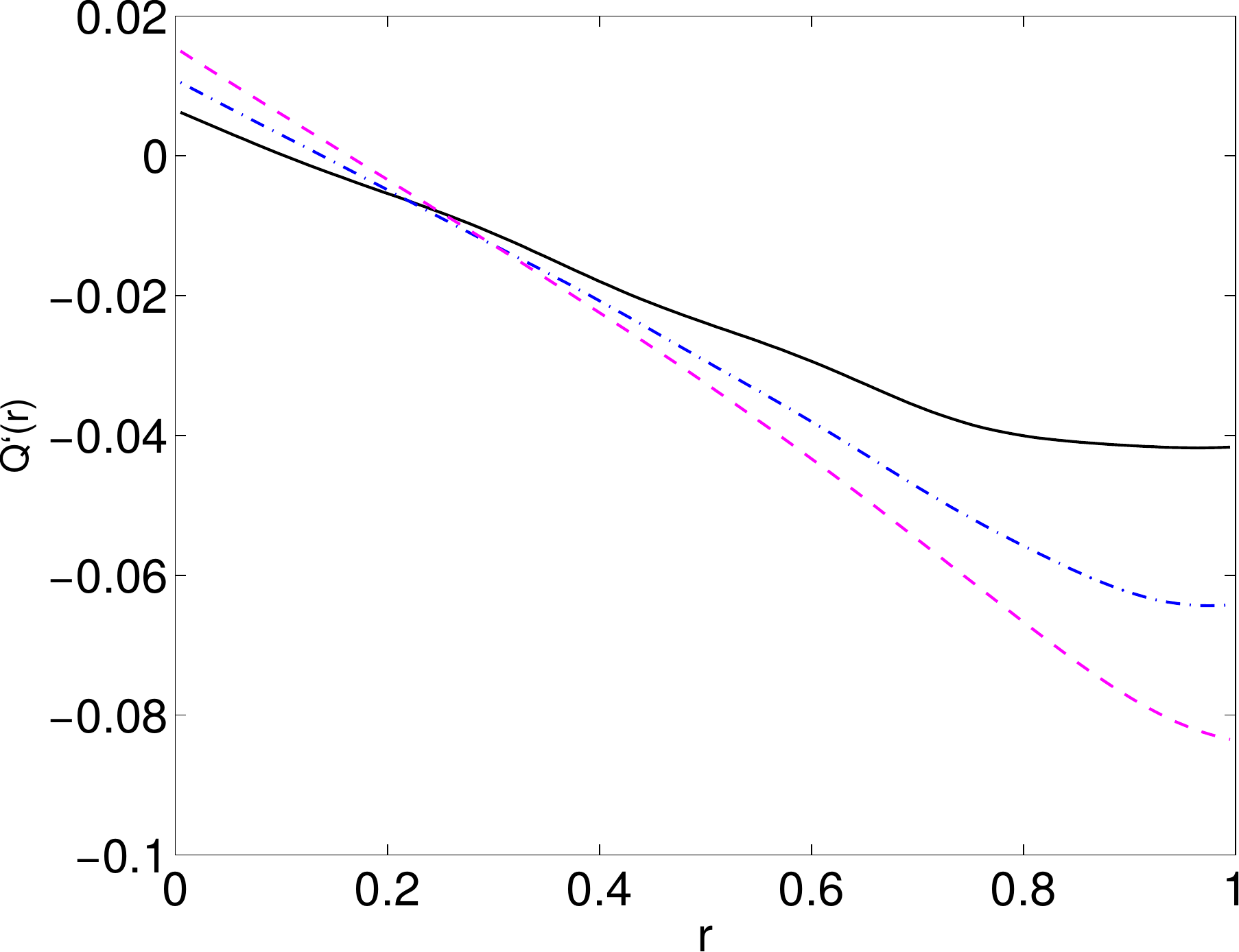}
                \end{minipage}
                \begin{minipage}[b]{0.32\textwidth}
                    \includegraphics[width=\textwidth]{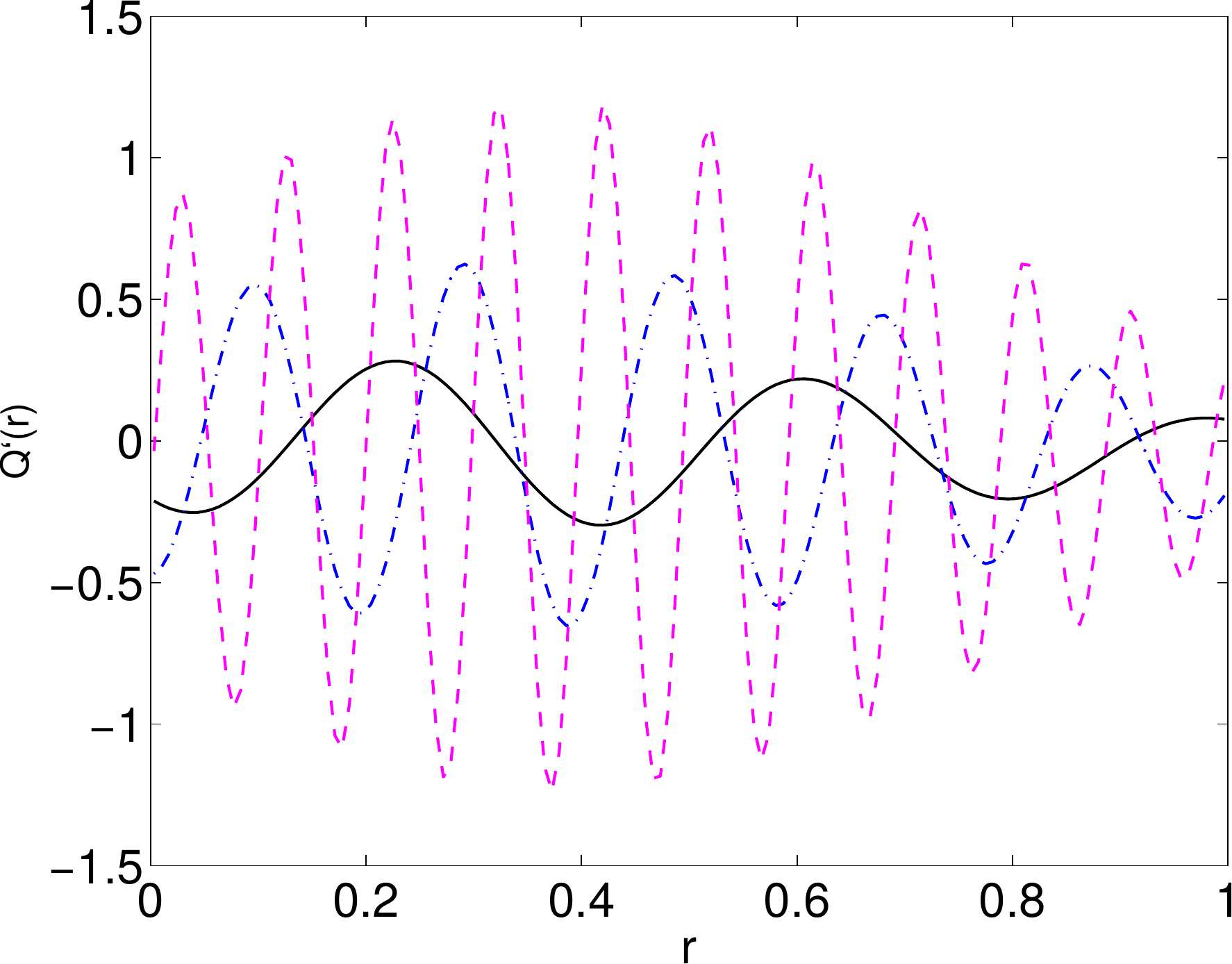}
                \end{minipage}
                \begin{minipage}[b]{0.32\textwidth}
                    \includegraphics[width=\textwidth]{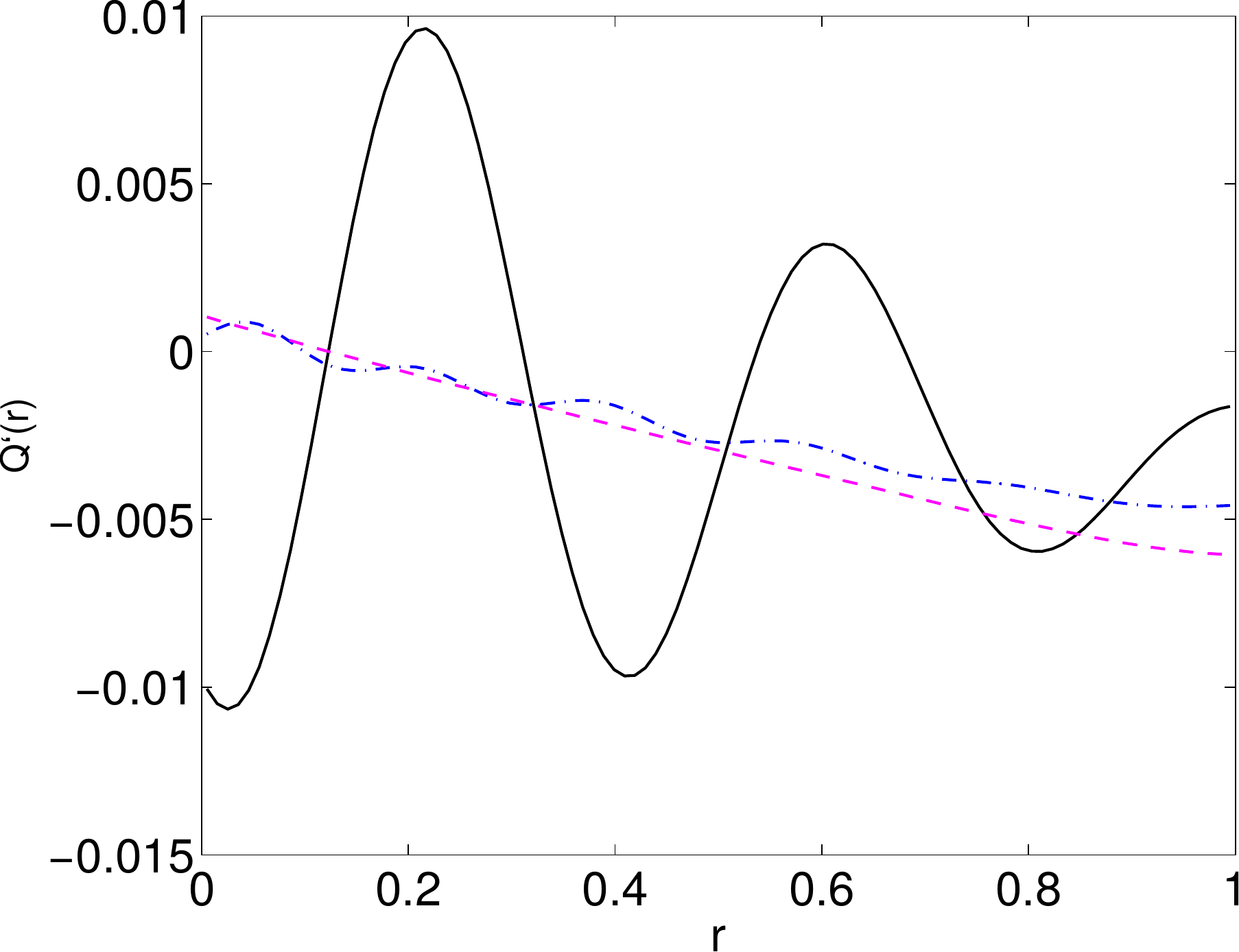}
                \end{minipage}                
                
                \begin{minipage}[b]{0.32\textwidth}
                    \includegraphics[width=\textwidth]{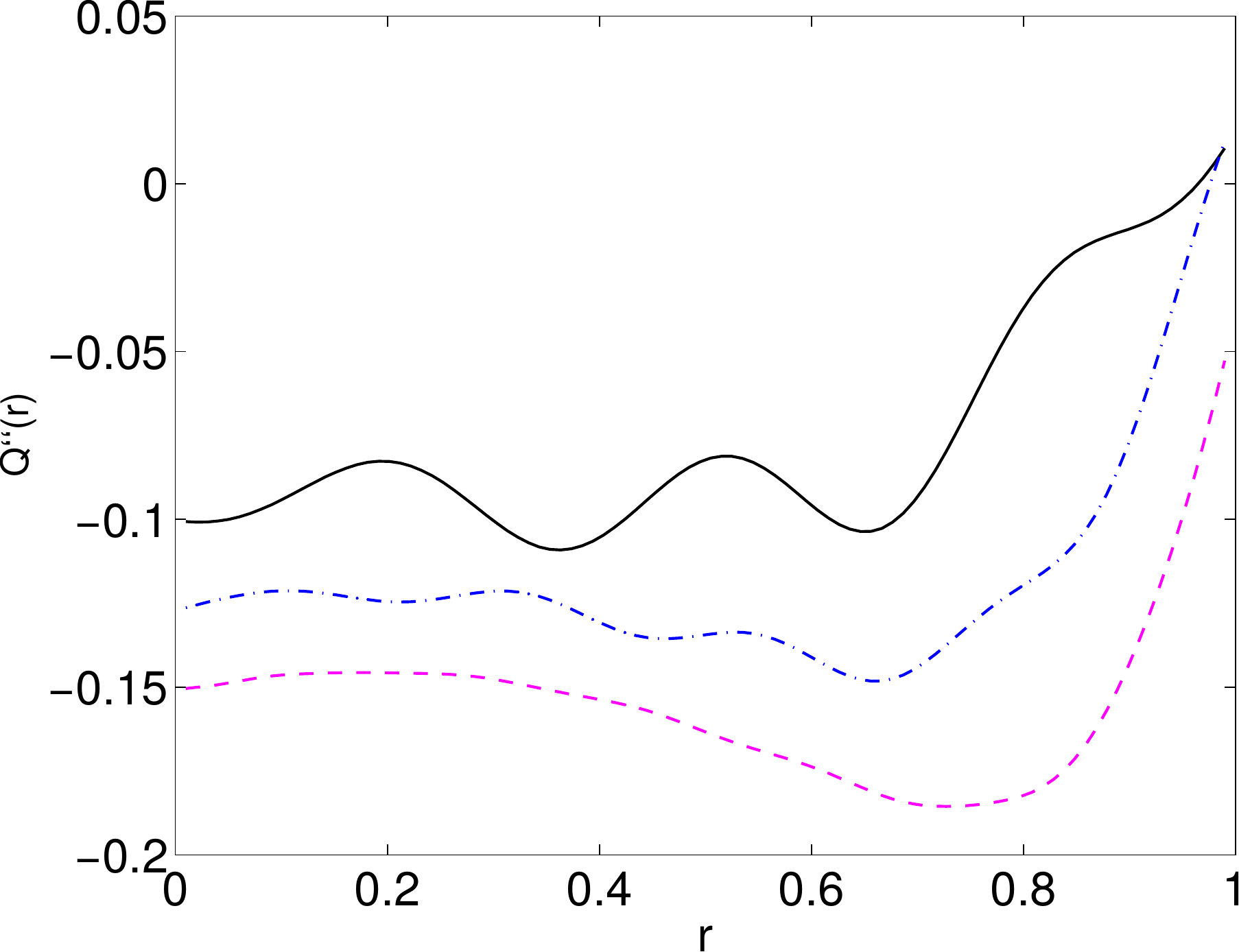}
                \end{minipage}
                \begin{minipage}[b]{0.32\textwidth}
                    \includegraphics[width=\textwidth]{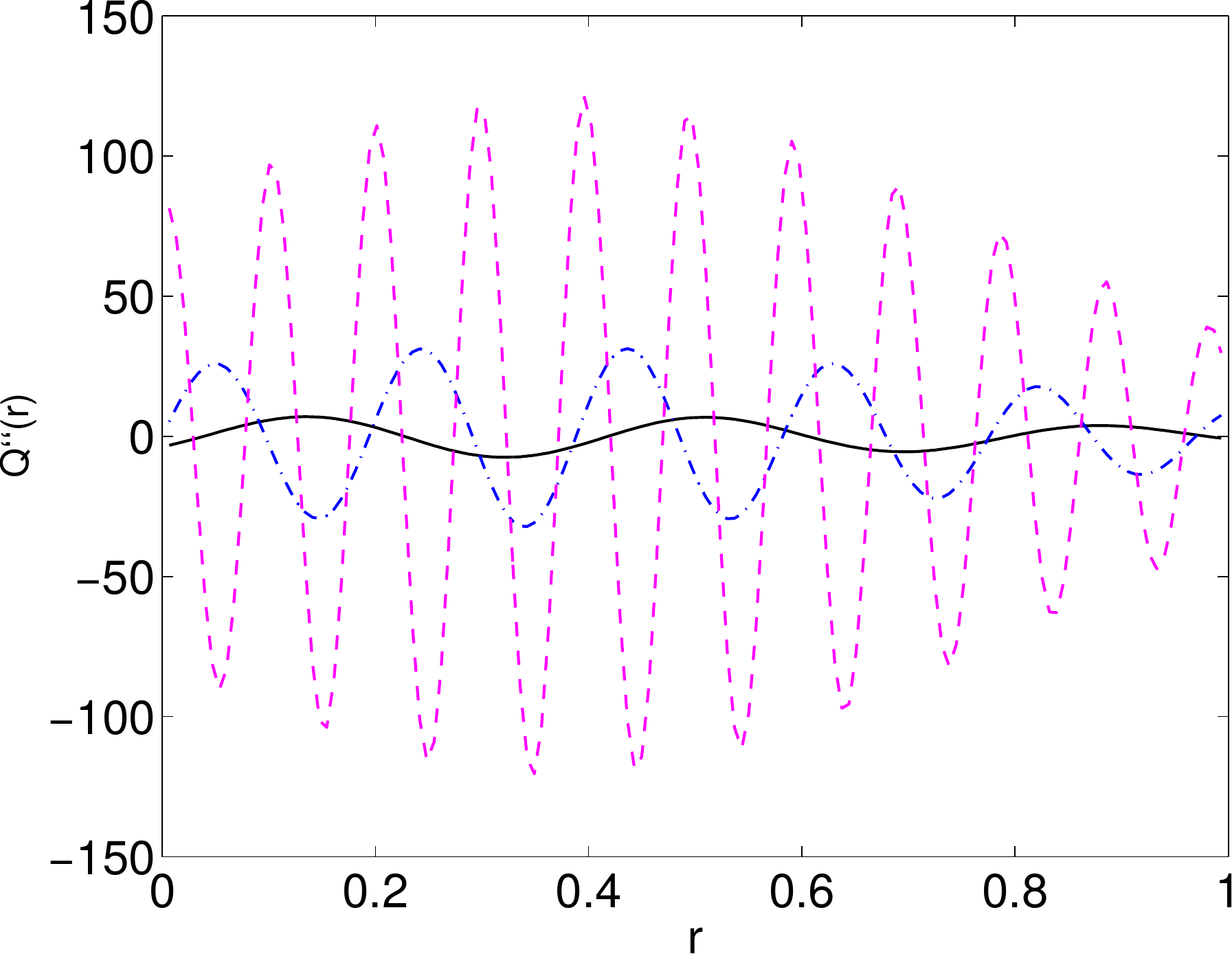}
                \end{minipage}  
                \begin{minipage}[b]{0.32\textwidth}
                    \includegraphics[width=\textwidth]{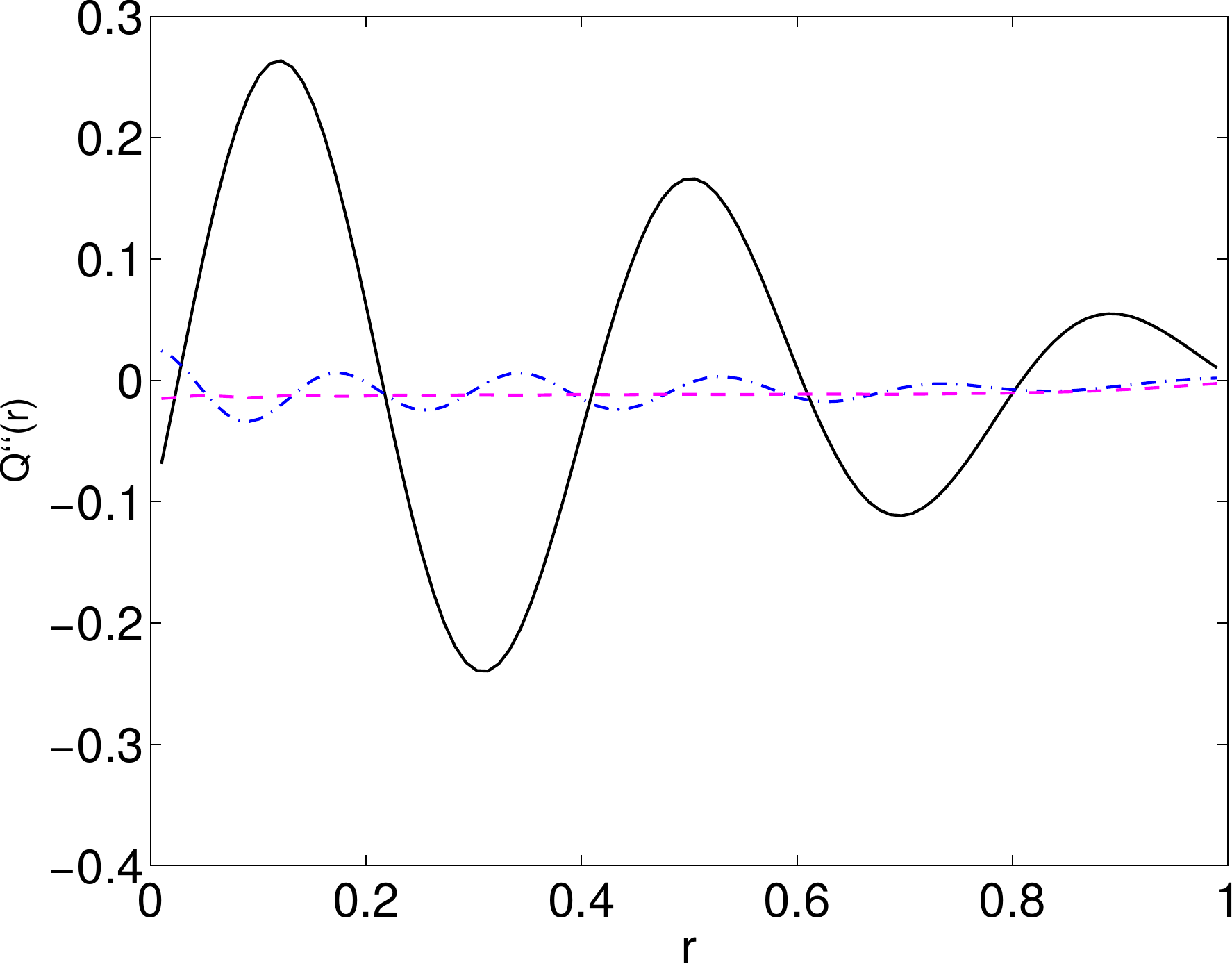}
                \end{minipage}                               
            \caption{Left column: $\widetilde{\mathcal Q}_{\text{GB}}$ and its first and second derivatives for one-mode solution. Central column: $\widetilde{\mathcal Q}_{\text{GB}}$ and its first and second derivatives for two-mode solution. Right column: $\mathcal Q_{\text{GB}}$ and its first and second derivatives for two-mode solution.} 
            \label{fig:qoiGB}
		\end{figure}

    	\begin{figure}[h!]
            \centering
                \begin{minipage}[b]{0.32\textwidth}
                    \includegraphics[width=\textwidth]{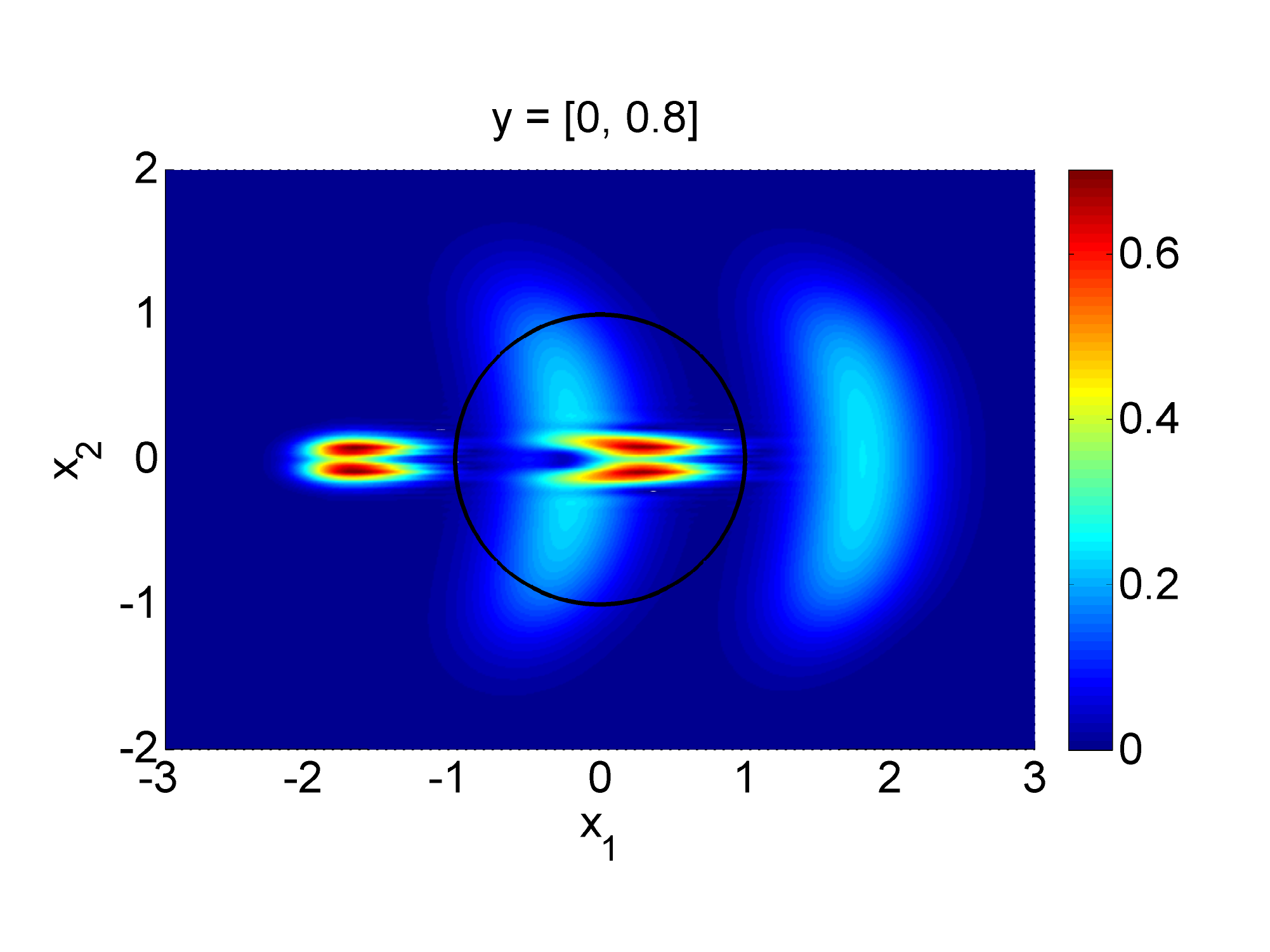}
                \end{minipage}
                \begin{minipage}[b]{0.32\textwidth}
                    \includegraphics[width=\textwidth]{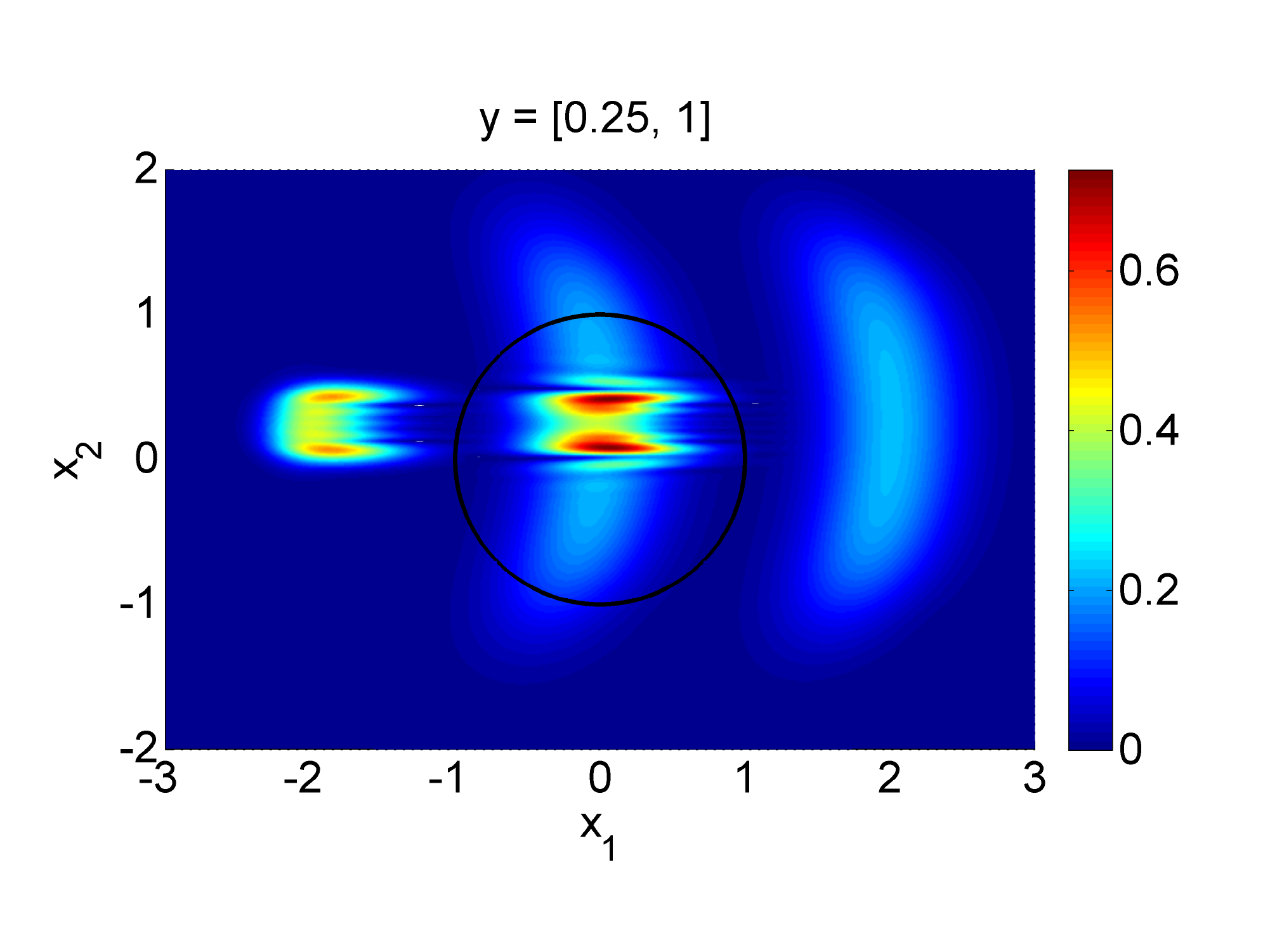}
                \end{minipage}
                \begin{minipage}[b]{0.32\textwidth}
                    \includegraphics[width=\textwidth]{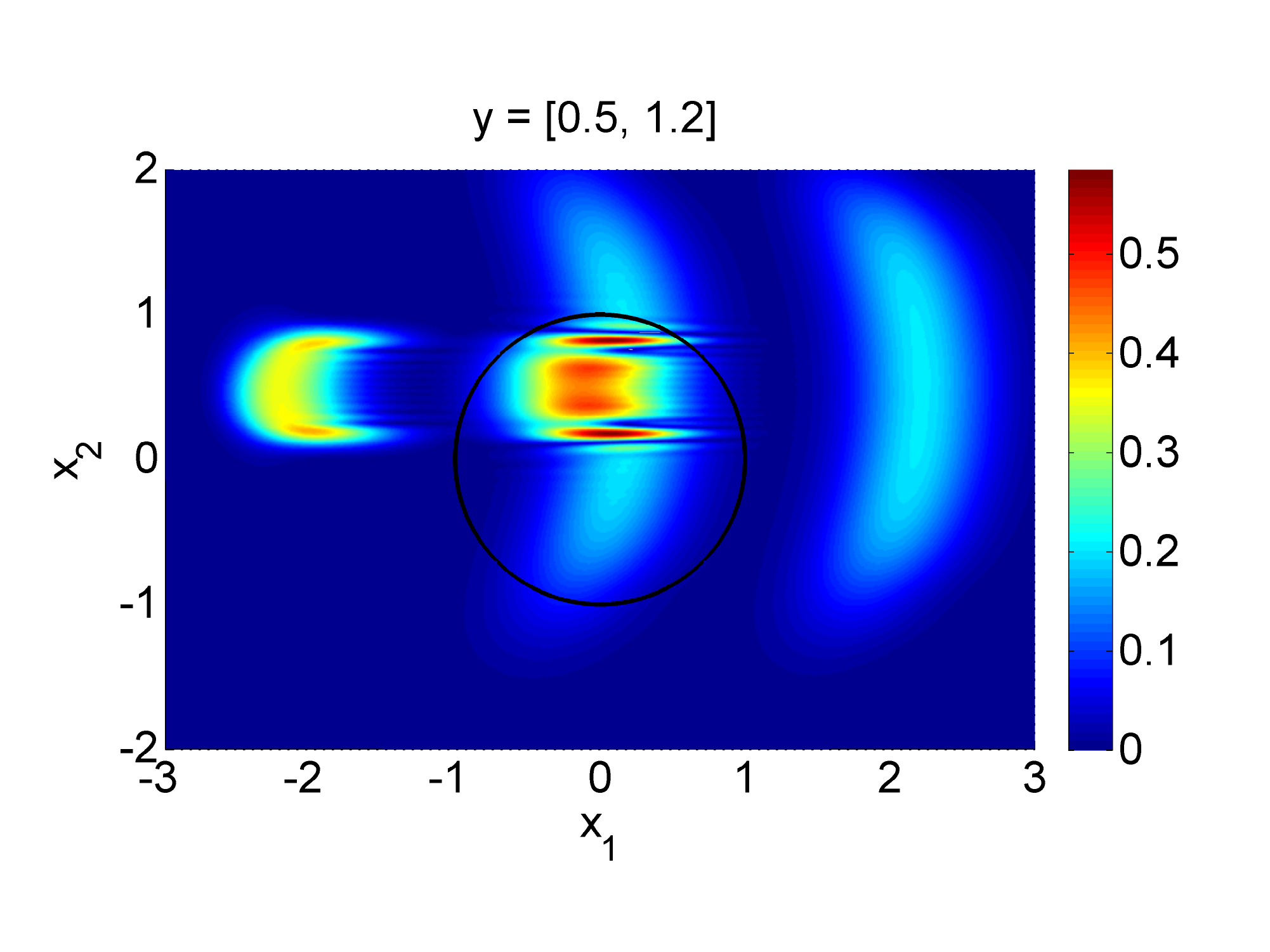}
                \end{minipage}
            \caption{The modulus of the GB solution $|u_1(t,\mb x,\mb y)|$ for $\eps=1/60$ and $\varphi_0(\mb x,\mb y) = x_1+(x_2-y_1)^2$ at time $t=1$, for various $y$. The circle denotes the support of the test function $\psi$.} 
            \label{fig:solforqoi2b}
		\end{figure}
		
Let us now consider the same setup only changing the initial phase function to
\[
	\varphi_0(\mb x,\mb y) = x_1 + (x_2-y_1)^2.
\]
Three realizations of $|u_1(t,\mb x,\mb y)|$ at $t=1$ are shown in Figure \ref{fig:solforqoi2b}.
It is no longer the case that the two branches moving towards the center can be described by the same GB mode.
A numerical test plotted in Figure \ref{fig:qoiGB}, central column, confirms the presence of two GB modes since the QoI cannot be bounded by a constant independent of $\eps$. Here, we again plot $\widetilde{\mathcal Q}_{\text{GB}}$ and its first and second derivatives with respect to $r$ at time $t=1$ as a function of $r$. Oscillations with increasing amplitudes can be observed.

To get rid of the oscillations, we need to consider the time-integrated QoI $\mathcal Q_{\text{GB}}$. We introduce the test function
\[
 \psi(\mb x) = \left\{ \begin{array}{ll}
 			e^{-\frac{|\mb x|^2}{1-|\mb x|^2}-10\frac{(t-1)^2}{0.2^2-(t-1)^2}}, & \text{for } |\mb x|\leq 1, \text{and } |t-1|\leq 0.2,\\
 			0, & \text{otherwise},
 			\end{array}\right.
\]
and integrate over both $\mb x$ and $t$. 
The QoI and its first and second derivatives are shown in Figure \ref{fig:qoiGB}, right column. The oscillations do not disappear entirely, but their amplitude decrease rapidly as $\eps\to 0$. This illustrates the difference between $\mathcal Q_{\text{GB}}$ and $\widetilde{\mathcal Q}_{\text{GB}}$.
		
\appendix
\section{Proof of Theorem \ref{th:oldmodified}}
To simplify the expressions, we first introduce the symmetrizing variables
\begin{equation}\label{qbar_deltaq}
\bar{\bf q} = \bar{\bf q}(t,\bold y,\mathbf z, \mathbf z') = \frac{{\bf q}(t,\bold y,\mathbf z) + {\bf q}(t,\bold y,\mathbf z')}{2}, \qquad \Delta {\bf q} = \Delta {\bf q}(t,\bold y,\mathbf z, \mathbf z') =  \frac{{\bf q}(t,\bold y,\mathbf z) - {\bf q}(t,\bold y,\mathbf z')}{2},
\end{equation}
and the symmetrized version of the 
space ${\mc T}_\eta$ 
used in Section~\ref{sec:th:new}%
\begin{align*}
\mc T^s_\eta &:= \Bigl\{f\in C^\infty: \text{supp } f(t,\cdot,\mb y,\mb z,\mb z')\subset \Lambda^s_\eta(t,\mb y,\mb z,\mb z'), \: \forall t\in \Real, \, \mb y\in \Gamma, \, \mb z,\mb z'\in \Real^n\Bigr\},\\
&\ \ \ \text{where}\:
\Lambda^s_\eta(t,\mb y,\mb z,\mb z') := \{\mb x\in \Real^n: |\mb x-\Delta \mb q|\leq 2\eta \quad \text{and} \quad |\mb x+\Delta \mb q|\leq 2\eta\}.
\end{align*}
%
%
Then $I_0$ in \eqref{I0} can be written as
\be\label{I_eps_mod}
      I_0(t,{\mb y},{\mb z}, {\mb z}')  =
       \int_{\Real^n} h(t,\mb x,\mb y,\mb z,\mb z') (\mb x-\Delta \mb q)^{\bs\alpha}\: (\mb x+\Delta \mb q)^{\bs\beta}\: e^{i  \Psi_k(t, \mb x, \mb y,\mb z,\mb z') / \varepsilon}  \, d\mb x,
\ee
where $\Psi_k(t, \mb x, \mb y,\mb z,\mb z') = \Theta_k(t,\mb x + \bar{\mb q},\mb y,\mb z,\mb z')$ and $h(t,\mb x,\mb y,\mb z,\mb z') = f(t,\mb x+\bar{\mb q},\mb y,\mb z,\mb z')$ so that $h\in {\mc T}^s_{\eta}$ since $f\in \mathcal T_\eta$.
The following auxiliary lemma is a compilation of Lemma 3 and the differentiated version of Lemma 4 in \cite{malenova2017stochastic}.
\begin{lemma}\label{lemma:temp}
There exists $f_{\bs\mu,\bs\nu}\in C^\infty$ such that
	  \begin{equation*}
      (\mb x - \Delta \mb q)^{\bs\alpha} \, (\mb x + \Delta \mb q)^{\bs\beta} = \sum_{|\bs\mu + \bs\nu| = |\bs\alpha + \bs\beta|}
      f_{\bs\mu,\bs\nu}(t,\mb y, \mb z, \mb z') (\mb z-\mb z')^{\bs\mu} \mb x^{\bs\nu}.
  \end{equation*}
For the $k$-th order symmetrized Gaussian beam phase $\Psi_k$, there exist $a_{\bs\alpha,\bs\beta,m} \in C^\infty$ such that
\begin{equation*}
    \partial_{y_m}\Psi_k(t,\mb x,\mb y,\mb z,\mb z') = \sum_{2\leq|\bs\alpha+\bs\beta|\leq k+1} a_{\bs\alpha,\bs\beta,m}(t,\mb y,\mb z,\mb z') \, (\mb z-\mb z')^{\bs\alpha} \, \mb x^{\bs\beta}. 
\end{equation*}
\end{lemma}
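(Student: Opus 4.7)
The plan is to treat the two claims in sequence, using the first as a tool for the second.

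For Claim 1, the key observation is that $\Delta \mb q$ vanishes on the diagonal $\mb z = \mb z'$. Since Proposition \ref{prop:smooth} gives $\mb q \in C^\infty$, the fundamental theorem of calculus yields
\[
\Delta \mb q(t, \mb y, \mb z, \mb z') = G(t, \mb y, \mb z, \mb z')(\mb z - \mb z'), \qquad
G := \tfrac{1}{2}\int_0^1 \nabla_{\mb z}\mb q(t, \mb y, \mb z' + s(\mb z - \mb z'))\, ds,
\]
with $G \in C^\infty$ matrix-valued. Expanding $(\mb x - \Delta \mb q)^{\bs\alpha}(\mb x + \Delta \mb q)^{\bs\beta}$ multinomially, substituting the factorization, and collecting monomials of the form $\mb x^{\bs\nu}(\mb z - \mb z')^{\bs\mu}$ produces smooth coefficients $f_{\bs\mu, \bs\nu}$ built from binomial factors and entries of $G$. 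The degree of each monomial in $(\mb x, \Delta \mb q)$ is preserved under the substitution, so $|\bs\mu + \bs\nu| = |\bs\alpha + \bs\beta|$ throughout.

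For Claim 2, first rewrite $\Psi_k$ using the symmetrizing shift $\mb x \mapsto \mb x + \bar{\mb q}$ and the identities $\bar{\mb q} - \mb q(t, \mb y, \mb z') = \Delta \mb q$ and $\bar{\mb q} - \mb q(t, \mb y, \mb z) = -\Delta \mb q$, giving
\[
\Psi_k(t, \mb x, \mb y, \mb z, \mb z') = \Phi_k(t, \mb x + \Delta \mb q, \mb y, \mb z') - \Phi_k^*(t, \mb x - \Delta \mb q, \mb y, \mb z).
\]
By \eqref{phidef}, $\Phi_k$ is a polynomial of degree $k+1$ in its spatial argument with smooth coefficients, so substituting the factorization of $\Delta \mb q$ from Claim 1 renders $\Psi_k$ a polynomial in $\mb x$ and $\mb z - \mb z'$ of total degree at most $k+1$ with $C^\infty$ coefficients in $(t, \mb y, \mb z, \mb z')$. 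Differentiation in $y_m$ preserves this structure, giving the upper bound $|\bs\alpha + \bs\beta| \leq k+1$.

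The remaining step, and the main technical obstacle, is the lower bound $|\bs\alpha + \bs\beta| \geq 2$: I must show that the constant and linear Taylor coefficients of $\partial_{y_m}\Psi_k$ at $(\mb x, \mb z - \mb z') = (\mb 0, \mb 0)$ vanish identically in $(t, \mb y, \mb z)$. The constant term is $\partial_{y_m}(\phi_0 - \phi_0^*)$, which vanishes because the ODE $\dot\phi_0 = 0$ preserves the reality of the initial data $\varphi_0$ from \ref{ASSUM3}; the linear-in-$\mb x$ coefficient reduces to $\partial_{y_m}(\mb p - \mb p^*)$, which vanishes for the same reason applied to $\mb p$. The most delicate case is the linear-in-$(\mb z - \mb z')$ coefficient: a chain-rule computation using $\partial_{r_\ell}\Delta \mb q|_{\mb r = 0} = \tfrac{1}{2}\partial_{z_\ell}\mb q$ reduces it to $\partial_{y_m}(\mb p \cdot \partial_{z_\ell}\mb q - \partial_{z_\ell}\phi_0)$. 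I would verify the Hamilton--Jacobi identity $\mb p \cdot \partial_{z_\ell}\mb q \equiv \partial_{z_\ell}\phi_0$ by noting it holds at $t = 0$ via the initial data \eqref{GBini} (using $\partial_{z_\ell}\phi_0|_{t=0} = \partial_{z_\ell}\varphi_0 = p_\ell(0)$ and $\partial_{z_\ell}\mb q|_{t=0} = \mb e_\ell$), then showing its time derivative vanishes by applying \eqref{IC}: the contribution $-|\mb p|\partial_{z_\ell}c(\mb q)$ from $\dot{\mb p}\cdot\partial_{z_\ell}\mb q$ cancels $+|\mb p|\partial_{z_\ell}c(\mb q)$ from $\mb p\cdot\partial_{z_\ell}\dot{\mb q}$ (the latter computed via $\dot{\mb q} = c\mb p/|\mb p|$ together with $\mb p \cdot \partial_{z_\ell}(\mb p/|\mb p|) = 0$). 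The identity thus holds for all $t$, so its $\partial_{y_m}$ derivative is identically zero, completing the lemma.
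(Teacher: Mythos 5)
Your proof is correct. Note that the paper itself does not prove this lemma --- it is dispatched in one line as ``a compilation of Lemma 3 and the differentiated version of Lemma 4 in [malenova2017stochastic]'' --- so there is no in-paper argument to compare against; what you have produced is a self-contained derivation, and every step checks out. The fundamental-theorem-of-calculus factorization $\Delta\mb q = G(t,\mb y,\mb z,\mb z')(\mb z-\mb z')$ with $G\in C^\infty$ (smoothness of $\mb q$ coming from Proposition \ref{prop:smooth}) gives Claim 1 with the exact degree count $|\bs\mu+\bs\nu|=|\bs\alpha+\bs\beta|$, since each factor $(\Delta\mb q)_j$ is replaced by a term homogeneous of degree one in $\mb z-\mb z'$. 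For Claim 2, the identity $\Psi_k = \Phi_k(t,\mb x+\Delta\mb q,\mb y,\mb z')-\Phi_k^*(t,\mb x-\Delta\mb q,\mb y,\mb z)$ follows correctly from $\bar{\mb q}-\mb q(t,\mb y,\mb z')=\Delta\mb q$, and since $y_m$-differentiation only hits the $(t,\mb y,\mb z,\mb z')$-dependent coefficients once $\Psi_k$ is written in the monomial basis, the upper bound $k+1$ is immediate. The three low-order obstructions are exactly the ones you list: reality of $\phi_0$ and $\mb p$ (real ODEs, real data from \ref{ASSUM3}) kills the constant and linear-in-$\mb x$ coefficients on the diagonal, and the eikonal compatibility relation $\mb p\cdot\partial_{z_\ell}\mb q=\partial_{z_\ell}\phi_0$ --- which you correctly verify at $t=0$ from \eqref{GBini} and propagate via \eqref{IC}, the cancellation resting on $\mb p\cdot\partial_{z_\ell}(\mb p/|\mb p|)=0$ --- kills the linear-in-$(\mb z-\mb z')$ coefficient; a second-order Taylor expansion with integral remainder about the diagonal then produces the smooth coefficients $a_{\bs\alpha,\bs\beta,m}$ with $|\bs\alpha+\bs\beta|\ge 2$. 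The only presentational gap is that you should state explicitly that you expand $\partial_{y_m}\Psi_k$ as a polynomial in $\mb x$ alone (a unique decomposition) and then apply the Taylor-with-remainder argument to the $|\bs\nu|\le 1$ coefficients as functions of $\mb z-\mb z'$; as written, ``Taylor coefficients at $(\mb x,\mb z-\mb z')=(\mb 0,\mb 0)$'' slightly blurs which decomposition is meant, but the computations you perform are precisely the right ones for the correct reading.
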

The following proposition is an update of \cite[Proposition 3]{malenova2017stochastic} adapted to our case.
\begin{proposition}\label{prop:bigtheorem}
There exist functions $g_{\bs\mu,\bs\nu,\bs\sigma,\ell} \in {\mc T}^s_{\eta}$ and $L_{\bs\sigma},M_{\bs\sigma}\geq 0$ such that the derivatives of $I_0$ in \eqref{I_eps_mod} with respect to $\mb y$ read
    \begin{equation}\label{bigtheorem}
        \partial^{\bs{\bs\sigma}}_{\mb y} I_0(t,\mb y,\mb z,\mb z') = 
\sum_{\ell=-|\bs{\bs\sigma}|}^{L_{\bs\sigma}} \sum_{|\bs\mu+\bs\nu|+2\ell=0}^{M_{\bs\sigma}}
        \varepsilon^{\ell}
        (\mb z-\mb z')^{\bs\mu} \int_{\mathbb R^n} \mb x^{\bs\nu} g_{\bs\mu,\bs\nu,\bs{\bs\sigma},\ell}(t,\mb x,\mb y,\mb z,\mb z') e^{i\Psi_k(t,\mb x,\mb y,\mb z,\mb z')/\varepsilon}  d\mb x.
    \end{equation}
\end{proposition}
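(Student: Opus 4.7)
The approach is induction on $|\bs\sigma|$. For the base case $\bs\sigma=\mb 0$, I apply the first identity in Lemma \ref{lemma:temp} to the polynomial factor $(\mb x-\Delta\mb q)^{\bs\alpha}(\mb x+\Delta\mb q)^{\bs\beta}$ inside the integrand of \eqref{I_eps_mod}; this writes $I_0$ as a sum over $|\bs\mu+\bs\nu|=|\bs\alpha+\bs\beta|$ of terms of the required form with $\ell=0$ and $g_{\bs\mu,\bs\nu,\mb 0,0}=h\cdot f_{\bs\mu,\bs\nu}$. The latter lies in $\mc T^s_\eta$ because $h\in\mc T^s_\eta$ and $f_{\bs\mu,\bs\nu}$ is smooth, so multiplication preserves the support condition on the $\mb x$ variable. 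In particular $L_{\mb 0}=0$ and $M_{\mb 0}=|\bs\alpha+\bs\beta|$, consistent with the claimed summation ranges.

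For the inductive step I differentiate each summand in \eqref{bigtheorem} by a single $\partial_{y_m}$. This produces two kinds of contributions. First, the derivative hits the amplitude, giving a term of the same form with unchanged $(\ell,\bs\mu,\bs\nu)$ and amplitude $\partial_{y_m} g_{\bs\mu,\bs\nu,\bs\sigma,\ell}$, which remains in $\mc T^s_\eta$ since the support constraint defining that space concerns only $\mb x$. Second, the derivative hits the exponential, producing a factor $(i/\varepsilon)\partial_{y_m}\Psi_k$; inserting the second identity of Lemma \ref{lemma:temp} decomposes this into a sum over $2\leq|\bs\alpha+\bs\beta|\leq k+1$ of terms
\[
i\,\varepsilon^{\ell-1}(\mb z-\mb z')^{\bs\mu+\bs\alpha}\int_{\Real^n}\mb x^{\bs\nu+\bs\beta}\bigl(a_{\bs\alpha,\bs\beta,m}\,g_{\bs\mu,\bs\nu,\bs\sigma,\ell}\bigr)\,e^{i\Psi_k/\varepsilon}\,d\mb x,
\]
with new indices $\ell'=\ell-1$, $\bs\mu'=\bs\mu+\bs\alpha$, $\bs\nu'=\bs\nu+\bs\beta$, and new amplitude $a_{\bs\alpha,\bs\beta,m}\cdot g_{\bs\mu,\bs\nu,\bs\sigma,\ell}\in\mc T^s_\eta$ by the same argument as in the base case.

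It remains bookkeeping to verify the range constraints. Only the second contribution decreases $\ell$, and only by one, so the minimum value reached is $-|\bs\sigma|-1=-|\bs\sigma+\mb e_m|$, as required, and one may set $L_{\bs\sigma+\mb e_m}:=L_{\bs\sigma}$. For the quantity $|\bs\mu+\bs\nu|+2\ell$: the first contribution preserves it, while the second shifts it by $|\bs\alpha+\bs\beta|-2\in[0,k-1]$, so nonnegativity is preserved (using $|\bs\alpha+\bs\beta|\geq 2$ and the inductive hypothesis) and one may take $M_{\bs\sigma+\mb e_m}:=M_{\bs\sigma}+k-1$. No genuine obstacle arises: Lemma \ref{lemma:temp} is tailored to the two algebraic operations encountered during differentiation, namely rewriting $(\mb x\pm\Delta\mb q)^{\bs\gamma}$ as polynomials in $(\mb z-\mb z',\mb x)$ and expanding $\partial_{\mb y}\Psi_k$ in the same variables. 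The only feature worth flagging is the accumulation of the negative powers $\varepsilon^{-|\bs\sigma|}$; these will eventually be absorbed by the later non-stationary-phase argument, which uses the compensating factor $(\mb z-\mb z')^{\bs\mu}$ together with the Gaussian decay of $|e^{i\Psi_k/\varepsilon}|$ in $\mb x$ inherited from admissibility.
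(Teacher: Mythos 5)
Your proposal is correct and follows essentially the same route as the paper: the same induction on $|\bs\sigma|$, the same base case via the first identity of Lemma \ref{lemma:temp}, the same split of $\partial_{y_m}$ into an amplitude term and an $\varepsilon^{-1}\partial_{y_m}\Psi_k$ term expanded by the second identity, and the same bookkeeping $L_{\tilde{\bs\sigma}}=L_{\bs\sigma}$, $M_{\tilde{\bs\sigma}}=M_{\bs\sigma}+k-1$ with nonnegativity of $|\bs\mu+\bs\nu|+2\ell$ preserved because $|\bs\gamma+\bs\delta|\geq 2$.
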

\begin{proof}
Recalling Lemma \ref{lemma:temp}, \eqref{I_eps_mod} can be reformulated as
\begin{equation*}
      I_0(t,{\mb y},{\mb z}, {\mb z}')  =
       \sum_{|\bs\mu+\bs\nu|=|\bs\alpha+\bs\beta|}(\mb z-\mb z')^{\bs\mu} \int_{\Real^n}
      \mb x^{\bs\nu}\: g_{\bs\mu,\bs\nu}(t,\mb x,\mb y,\mb z,\mb z') \: e^{i  \Psi_k(t, \mb x, \mb y,\mb z,\mb z') / \varepsilon}  \, d\mb x,
\end{equation*}
with $g_{\bs\mu,\bs\nu}(t,\mb x,\mb y,\mb z,\mb z') = h(t,\mb x,\mb y,\mb z,\mb z') f_{\bs\mu,\bs\nu}(t,\mb y,\mb z,\mb z')$. Therefore, since $h\in \mc T^s_{\eta}$ and $f_{\bs\mu,\bs\nu}\in C^\infty$ we have $g_{\bs\mu,\bs\nu}\in {\mc T}^s_{\eta}$.
We will now prove \eqref{bigtheorem} by induction. First, the statement is valid for $\bs\sigma=\mb 0$ since we can choose $L_{\bs 0}=0$, $M_{\bs 0}=|\bs\alpha+\bs\beta|$ and
\[
g_{\bs\mu,\bs\nu,\mb 0,0} = \left\{\begin{array}{ll}
		g_{\bs\mu,\bs\nu}, & \text{for }  |\bs \mu+\bs\nu|=|\bs\alpha +\bs\beta|,\\
		0, & \text{otherwise}.
	\end{array}\right.
\]
For the induction step let $L_{\bs\sigma},M_{\bs\sigma}\geq 0$ and $g_{\bs\mu,\bs\nu,\bs\sigma,\ell}\in {\mc T}^s_{\eta}$ be such that \eqref{bigtheorem} holds. Then for $\tilde{\bs\sigma} = \bs\sigma + \mb e_m,$ where $\mb e_m$ is the $m$-th unit vector, we have $\partial^{\tilde{\bs\sigma}}_{\mb y} I_0= \partial_{y_m} \partial_{\mb y}^{\bs\sigma} I_0$. Using \eqref{bigtheorem}, we can write 
    \begin{align*}
        \partial^{\tilde{\bs\sigma}}_{\mb y} I_0 & = \sum_{\ell=-|\bs{\bs\sigma}|}^{L_{\bs\sigma}} \sum_{|\bs\mu+\bs\nu|+2\ell=0}^{M_{\bs\sigma}} \varepsilon^{\ell}
        (\mb z-\mb z')^{\bs\mu} \int_{\mathbb R^n} \mb x^{\bs\nu} \left(\partial_{y_m} g_{\bs\mu,\bs\nu,\bs{\bs\sigma},\ell} + g_{\bs\mu,\bs\nu,\bs{\bs\sigma},\ell}\, i\eps^{-1} \partial_{y_m} \Psi_k \right) \,e^{i\Psi_k/\varepsilon}  d\mb x \\ 
        & = {\textcircled{\raisebox{-0.9pt}{1}}} + {\textcircled{\raisebox{-0.9pt}{2}}}. 
    \end{align*}
Since $\partial_{y_m} g_{\bs\mu,\bs\nu,\bs\sigma,\ell}\in {\mc T}^s_{\eta}$, {\textcircled{\raisebox{-0.9pt}{1}}} is of the form \eqref{bigtheorem} with $L_{\tilde{\bs\sigma}}=L_{\bs\sigma}$, $M_{\tilde{\bs\sigma}} = M_{\bs\sigma}$ and
\[
	g_{\bs\mu,\bs\nu,\tilde{\bs\sigma},\ell} = \left\{\begin{array}{ll}
	\partial_{y_m}g_{\bs\mu,\bs\nu,\bs\sigma,\ell}, & \text{for } \ell \geq -|\bs\sigma|,\\
	0, & \text{for } \ell = -|\bs\sigma|-1.\end{array}\right.
\]
Regarding the remaining terms {\textcircled{\raisebox{-0.9pt}{2}}}, let us express the derivative $\partial_{y_m}\Psi_k$ by Lemma \ref{lemma:temp}. Then {\textcircled{\raisebox{-0.9pt}{2}}}  reads
    \be\label{tempp}
\sum_{\ell=-|\bs{\bs\sigma}|}^{L_{\bs\sigma}} \sum_{|\bs\mu+\bs\nu|+2\ell=0}^{M_{\bs\sigma}} \sum_{|\bs\gamma+\bs\delta|=2}^{k+1} \eps^{\ell-1}
        (\mb z-\mb z')^{\bs\mu+\bs\gamma} \int_{\mathbb R^n} \mb x^{\bs\nu+\bs\delta} h_{\bs\mu,\bs\nu,\bs\gamma,\bs\delta,\ell}\, e^{i\Psi_k/\varepsilon}  d\mb x,
    \ee
with $h_{\bs\mu,\bs\nu,\bs\gamma,\bs\delta,\ell} = i a_{\bs\gamma,\bs\delta,m} \, g_{\bs\mu,\bs\nu,\bs{\bs\sigma},\ell}\in {\mc T}^s_{\eta}$ since $g_{\bs\mu,\bs\nu,\bs{\bs\sigma},\ell}\in \mc T^s_{\eta}$ and $a_{\bs\gamma,\bs\delta,m}\in C^\infty$. Each of the terms in \eqref{tempp} is therefore of the form 
\[
\eps^{\tilde\ell}
        (\mb z-\mb z')^{\tilde{\bs\mu}} \int_{\mathbb R^n} \mb x^{\tilde{\bs\nu}} h_{\tilde{\bs\mu},\tilde{\bs\nu},\tilde{\ell}}(t,\mb x,\mb y,\mb z,\mb z')\, e^{i\Psi_k(t,\mb x,\mb y,\mb z,\mb z')/\varepsilon}  d\mb x,
\]
where 
\[
	-|\tilde{\bs\sigma}|\leq \tilde\ell = \ell-1 \leq L_{\bs\sigma}-1=: L_{\tilde{\bs\sigma}},
\]
and
\[
	0 \leq|\tilde{\bs\mu}+\tilde{\bs\nu}| + 2\tilde\ell = |\bs\mu +\bs\nu|+2\ell + |\bs\gamma+\bs\delta|-2\leq M_{\bs\sigma}+k-1=: M_{\tilde{\bs\sigma}},
\]
which finalizes the induction argument and concludes Proposition \ref{prop:bigtheorem}.
\end{proof}

The rest of the proof of \cite[Theorem 1]{malenova2017stochastic} can be used as it is. In particular, if $\eta<\infty$, then
\cite[Lemma~5]{malenova2017stochastic} and \cite[Lemma~6]{malenova2017stochastic} are valid without any alteration. Ultimately, we are using the fact that $0\leq|\bs\mu+\bs\nu|+2\ell$ in \eqref{bigtheorem} which is still the case due to Proposition \ref{prop:bigtheorem}. Finally, since all estimates in \cite{malenova2017stochastic} are uniform in $t$, the constant $C_{\bs\sigma}$ is uniform in $[0,T]$ as well.
This completes the proof of Theorem \ref{th:oldmodified}.


\bibliographystyle{plain}
\bibliography{refs3}

\begin{thebibliography}{10}

\bibitem{BabuskaNobileTempone:2010}
I.~Babuska, F.~Nobile, and R.~Tempone.
\newblock A stochastic collocation method for elliptic partial differential
  equations with random input data.
\newblock {\em SIAM Rev.}, 52:317--355, 2010.

\bibitem{BTZ:05}
I.~Babuska, R.~Tempone, and G.~E. Zouraris.
\newblock Solving elliptic boundary value problems with uncertain coefficients
  by the finite element method: the stochastic formulation.
\newblock {\em Comput. Method. Appl. M.}, 194:1251--1294, 2005.

\bibitem{BNT:07}
I.~M. Babuska, F.~Nobile, and R.~Tempone.
\newblock A stochastic collocation method for elliptic partial differential
  equations with random input data.
\newblock {\em SIAM J. Numer. Anal.}, 45:1005--1034, 2007.

\bibitem{BamEtAl:88}
A.~Bamberger, B.~Engquist, L.~Halpern, and P.~Joly.
\newblock Parabolic wave equation approximations in heterogeneous media.
\newblock {\em SIAM J. Appl. Math.}, 48(1):99--128, 1988.

\bibitem{Bungartz_Griebel:04}
H.-J. Bungartz and M.~Griebel.
\newblock Sparse grids.
\newblock {\em Acta Numer.}, 13:147--269, 2004.

\bibitem{Cerveny_etal:1982}
V.~Cerven\'{y}, M.~M. Popov, and I.~P\v{s}en\v{c}\'{i}k.
\newblock Computation of wave fields in inhomogeneous media --- {G}aussian beam
  approach.
\newblock {\em Geophys. J. R. Astr. Soc.}, 70:109--128, 1982.

\bibitem{CohenDevoreSchwab:2011}
A.~Cohen, R.~Devore, and C.~Schwab.
\newblock Analytic regularity and polynomial approximation of parametric and
  stochastic elliptic {PDE}s.
\newblock {\em Anal. Appl.}, 9:11--47, 2011.

\bibitem{EngRun:03}
B.~Engquist and O.~Runborg.
\newblock Computational high frequency wave propagation.
\newblock {\em Acta Numer.}, 12:181--266, 2003.

\bibitem{MC}
G.~S. Fishman.
\newblock {\em Monte {Carlo}: {C}oncepts, {A}lgorithms, and {A}pplications}.
\newblock Springer- Verlag, New York, 1996.

\bibitem{Ghanem}
R.~G. Ghanem and P.~D. Spanos.
\newblock {\em Stochastic finite elements: {A} spectral approach}.
\newblock Springer, New York, 1991.

\bibitem{Griebel_Knapek:09}
M.~Griebel and S.~Knapek.
\newblock Optimized general sparse grid approximation spaces for operator
  equations.
\newblock {\em Math. Comp.}, 78:2223--2257, 2009.

\bibitem{hansen1970seismic}
Robert~J Hansen.
\newblock {\em Seismic design for nuclear power plants.}
\newblock The {M}{I}{T} {P}ress, Cambridge, 1970.

\bibitem{hormander83}
L.~H{\"o}rmander.
\newblock {\em The Analysis of Linear Partial Differential Operators {I}:
  Distribution Theory and Fourier Analysis}.
\newblock Springer-Verlag, 1983.

\bibitem{JinLiuMa:2017}
S.~Jin, J.-G. Liu, and Z.~Ma.
\newblock Uniform spectral convergence of the stochastic {G}alerkin method for
  the linear transport equations with random inputs in diffusive regime and a
  micro-macro decomposition based asymptotic preserving method.
\newblock {\em Res. Math. Sci.}, 4(15), 2017.

\bibitem{JinLiuRussoZhou:2019}
S.~Jin, L.~Liu, G.~Russo, and Z.~Zhou.
\newblock Gaussian wave packet transform based numerical scheme for the
  semi-classical {S}chr{\"o}dinger equation with random inputs.
\newblock Technical report, arXiv:1903.08740 [math.NA], 2019.

\bibitem{JinZhu:2018}
S.~Jin and Y.~Zhu.
\newblock Hypocoercivity and uniform regularity for the {V}lasov-{P}oisson
  {F}okker--{P}lanck system with uncertainty and multiple scales.
\newblock {\em SIAM J. Math. Anal.}, 50:1790--1816, 2018.

\bibitem{LiFangLin:2018}
J.~Li, Z.~Fang, and G.~Lin.
\newblock Regularity analysis of metamaterial {M}axwell's equations with random
  coefficients and initial conditions.
\newblock {\em Comput. Method. Appl. M.}, 335:24--51, 2018.

\bibitem{LiWang:2017}
Q.~Li and L.~Wang.
\newblock Uniform regularity for linear kinetic equations with random input
  based on hypocoercivity.
\newblock {\em SIAM/ASA J. Uncertainty Quantification}, 5(1):1193--1219, 2017.

\bibitem{LRT:13}
H.~Liu, O.~Runborg, and N.~M. Tanushev.
\newblock Error estimates for {G}aussian beam superpositions.
\newblock {\em Math. Comp.}, 82:919--952, 2013.

\bibitem{LRT:16}
H.~Liu, O.~Runborg, and N.~M. Tanushev.
\newblock Sobolev and max norm error estimates for {G}aussian beam
  superpositions.
\newblock {\em Commun. Math. Sci.}, 14(7):2037--2072, 2016.

\bibitem{LiuJin:2017}
L.~Liu and S.~Jin.
\newblock Hypocoercivity based sensitivity analysis and spectral convergence of
  the stochastic {G}alerkin approximation to collisional kinetic equations with
  multiple scales and random inputs.
\newblock {\em Multiscale Model. Simul.}, 16(3):1085--1114, 2017.

\bibitem{malenova2016uncertainty}
G.~Malenov{\'a}.
\newblock {\em Uncertainty quantification for high frequency waves}.
\newblock {L}icentite thesis, KTH Royal Institute of Technology, 2016.

\bibitem{malenova2017stochastic}
G.~Malenov{\'a}, M.~Motamed, and O.~Runborg.
\newblock Stochastic regularity of a quadratic observable of high-frequency
  waves.
\newblock {\em Res. Math. Sci.}, 4(1):1--23, 2017.

\bibitem{MaMoRuTe16}
G.~Malenov{\'a}, M.~Motamed, O.~Runborg, and R.~Tempone.
\newblock A sparse stochastic collocation technique for high-frequency wave
  propagation with uncertainty.
\newblock {\em SIAM/ASA J. Uncertainty Quantification}, 4(1):1084--1110, 2016.

\bibitem{MotamedNobileTempone:2013}
M.~Motamed, F.~Nobile, and R.~Tempone.
\newblock A stochastic collocation method for the second order wave equation
  with a discontinuous random speed.
\newblock {\em Num. Math.}, 123(3):495--546, 2013.

\bibitem{NobileTempone:2009}
F.~Nobile and R.~Tempone.
\newblock Analysis and implementation issues for the numerical approximation of
  parabolic equations with random coefficients.
\newblock {\em IJNME}, 80:979--1006, 2009.

\bibitem{NTW:08}
F.~Nobile, R.~Tempone, and C.~G. Webster.
\newblock A sparse grid stochastic collocation method for partial differential
  equations with random input data.
\newblock {\em SIAM J. Numer. Anal.}, 46:2309--2345, 2008.

\bibitem{Ralston82}
J.~Ralston.
\newblock Gaussian beams and the propagation of singularities.
\newblock {\em Studies in partial differential equations}, 23:206--248, 1982.

\bibitem{Runborg07}
O.~Runborg.
\newblock Mathematical models and numerical methods for high frequency waves.
\newblock {\em Commun. Comput. Phys.}, 2:827--880, 2007.

\bibitem{ShuJin:2018}
R.~W. Shu and S.~Jin.
\newblock Uniform regularity in the random space and spectral accuracy of the
  stochastic {G}alerkin method for a kinetic-fluid two-phase flow model with
  random initial inputs in the light particle regime.
\newblock {\em M2AN}, 52:1651--1678, 2018.

\bibitem{Tanushev08}
N.~M. Tanushev.
\newblock Superpositions and higher order {G}aussian beams.
\newblock {\em Commun. Math. Sci.}, 6(2):449--475, 2008.

\bibitem{Todor_Schwab}
R.~A. Todor and C.~Schwab.
\newblock Convergence rates for sparse chaos approximations of elliptic
  problems with stochastic coefficients.
\newblock {\em IMA J. Numer. Anal.}, 27:232--261, 2007.

\bibitem{Xiu_Hesthaven}
D.~Xiu and J.~S. Hesthaven.
\newblock High-order collocation methods for differential equations with random
  inputs.
\newblock {\em SIAM J. Sci. Comput.}, 27:1118--1139, 2005.

\bibitem{Xiu_Karniadakis:02}
D.~Xiu and G.~E. Karniadakis.
\newblock Modeling uncertainty in steady state diffusion problems via
  generalized polynomial chaos.
\newblock {\em Comput. Method. Appl. M.}, 191:4927--4948, 2002.

\end{thebibliography}

\end{document}